\theoremstyle{plain}
\newcounter{dummy} \numberwithin{dummy}{section}
\newtheorem{theorem}[dummy]{Theorem}
\newtheorem{lemma}[dummy]{Lemma}
\newtheorem{remark}{Remark}
\newtheorem{problem}{Problem}
\newlist{primenumerate}{enumerate}{1}
\setlist[primenumerate,1]{label={4$'$.}}
\theoremstyle{remark}
\newtheorem{example}{Example}
\def\R{\mathbb{R}} 
\def\N{\mathbb{N}} 
\DeclareMathOperator{\Ex}{\mathbb{E}} 
\renewcommand{\Pr}{\mathbb{P}}
\newcommand{\Pm}{P}
\newcommand{\Pmhat}{\widehat{P}}
\newcommand{\Tmhat}{\widehat{T}}
\newcommand{\muhat}{\widehat{\mu}}
\newcommand*\rot{\rotatebox{90}}
\def\sB{\mathcal{B}}
\def\sD{\mathcal{D}}\def\sF{\mathcal{F}}
\def\sG{\mathcal{G}}
\def\sO{\mathcal{O}}
\def\sP{\mathcal{P}}
\def\sX{\mathcal{X}}
\def\sZ{\mathcal{Z}}
\def\bX{{\bf X}}
\def\bZ{{\bf Z}}
\def\bB{{\bf B}}
\def\eps{\varepsilon}
\def\Ind{{\bf 1}}
\newcommand{\Rad}{{\sf Rad}}
\newcommand{\Emp}{{\sf Emp}}
\newcommand{\mt}{{\sf m}}
\newcommand{\qd}{{\sf q}}
\DeclareMathOperator*{\argmax}{arg\,max}
\DeclareMathOperator*{\argmin}{arg\,min}
\newcommand{\SMDetailsVectorMean}{Appendix \ref{sec:sm_10}}
\newcommand{\SMBoundsRegression}{Appendix \ref{sec:sm_11}}
\newcommand{\SMSmallBall}{Appendix \ref{sec:sm_12}}
\newcommand{\SMAlg}{Appendix \ref{sec:sm_13}}
\newcommand{\SMExp}{Appendix \ref{sec:sm_14}}
\newcommand{\MTthmvector}{Theorem \ref{thm:vector}}
\newcommand{\MTsym}{Theorem \ref{thm:symcont}}
\newcommand{\MTtmbounds}{Theorem \ref{thm:uniformTM}}
\newcommand{\MTremarksm}{Remark \ref{rem:restricteps}}
\newcommand{\MTregrelparam}{\S\ref{sub:relevant}}
\newcommand{\MTthmreg}{Theorem \ref{thm:regressionbounds}}
\newcommand{\MTalg}{\S\ref{sec:algorithms}}
\newcommand{\MTsetupB}{\S\ref{sub:setupB}}
\newcommand{\MTthmmaster}{Theorem \ref{thm:tmlinear}}
\newcommand{\MTlemmabounds}{Lemma \ref{lem:bounding}}
\title{Trimmed sample means for robust uniform mean estimation and regression}
\author{ \href{https://orcid.org/0000-0002-1064-3398}{\includegraphics[scale=0.06]{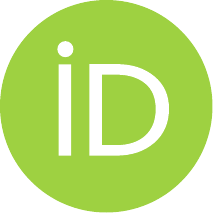}\hspace{1mm}Roberto I. Oliveira}\\
	IMPA\\
	Rio de Janeiro, Brazil \\
	\texttt{rimfo@impa.br} \\
	\And
	\href{https://orcid.org/0000-0001-6188-299X}{\includegraphics[scale=0.06]{orcid.pdf}\hspace{1mm}Lucas Resende} \\
	IMPA\\
    Rio de Janeiro, Brazil\\
	\texttt{lucas.resende@impa.br} \\
}
\begin{document}
\maketitle

\begin{abstract}
It is well-known that trimmed sample means are robust against heavy tails and data contamination. This paper analyzes the performance of trimmed means and related methods in two novel contexts. The first one consists of estimating expectations of functions in a given family, with uniform error bounds; this is closely related to the problem of estimating the mean of a random vector under a general norm. The second problem considered is that of regression with quadratic loss. In both cases,  trimmed-mean-based estimators are the first to obtain optimal dependence on the (adversarial) contamination level. Moreover, they also match or improve upon the state of the art in terms of heavy tails. Experiments with synthetic data show that a natural ``trimmed mean linear regression'' method often performs better than both ordinary least squares and alternative methods based on median-of-means.    \end{abstract}

\keywords{Sub-Gaussian estimators \and Robust estimators \and Regression.}

\section{Introduction}

The sample mean is probably the most fundamental way of aggregating information in Statistics \cite{stigler2016seven}. Mathematically, it can be understood as a way to approximate an expected value from a random sample. Aspects of this approximation, including its convergence for large sample sizes, are described by the Law of Large Numbers and various Limit Theorems.

In practice, sample means are often directly to estimate population parameters that correspond to expectations: means, (co)variances and other moments are natural examples. A second use is in $M$-estimation. If a population parameter can be expressed as a minimizer of
\begin{equation}\label{eq:statlearning}L_P(\theta):=\Ex_{X\sim P}\,\ell(X,\theta)\;\;(\theta\in\Theta) \end{equation}
for some suitable function $\ell$, it is natural to estimate this parameter by minimizing 
\begin{equation}\label{eq:statlearningsample}\widehat{L}_n(\theta):=\frac{1}{n}\sum_{i=1}^n\ell(X_i,\theta)\;\;(\theta\in\Theta)\end{equation}
instead, where $(X_1,\dots,X_n)$ is a random sample from $P$. A related setting is ``statistical learning,''~where the goal is to find a near minimizer of $L_P$. One way to do this is to minimize the sample loss (\ref{eq:statlearningsample}) instead: this is the well-known procedure of empirical risk minimization. 

In all of the above cases, sample means are used because as sample-based approximations of the corresponding expectations. However, these are often {\em not} the best such approximations. For instance, a single outlier can change the value of the sample mean arbitrarily. Robust Statistics \cite{huber1965robust,huber2011robust} takes this issue as its starting point, and designs estimators that can withstand a proportion of outliers.  

More subtly, the sample mean is also not optimal in terms of a phenomenon not captured by classical (asymptotic) Robust Statistics: its fluctuations over finite samples can be suboptimal. Consider, for instance, the basic problem of estimating the expectation of a one-dimensional random variable with unit variance. Asymptotically, the sample mean is Gaussian, but Catoni's seminal work \cite{Catoni2012} showed its non-asymptotic bounds are much worse (as Chebyshev's inequality is essentially optimal). What is really striking is that there are other, less obvious estimators with so-called ``sub-Gaussian'' finite-sample error bounds \cite{Catoni2012,Devroye2016,Lee2020}. Some of these estimators can also be made robust to contamination \cite{diakorobust}.

Catoni's discovery led to a surge of interest in finite-sample mean estimation for vectors \cite{Lugosi2019,Lugosi2019b,Lugosi2021,hopkins2020mean,Depersin2022,Minsker2015}, matrices \cite{minsker2018,mendelson2020,abdalla2022} and other objects under (relatively) heavy tails and contamination \cite{Lugosi2019c}, and also on computationally efficient methods with these properties \cite{diakorobust,dong2019,diakonikolas2022}; see \S \ref{sub:related} for more on related work. 

Many of the above papers look at {\em adversarial contamination of the data}. The idea is to obtain estimators that work well even when a small portion of the sample is manipulated arbitrarily. This is a less favorable model for outliers than the Huber contamination model from classical Robust Statistics. We compare and contrast the two models in the end of \S \ref{sub:related}. For now, we simply notice that the sample mean is extremely vulnerable to contamination in either sense.

\subsection{The trimmed mean} In this paper, we show that classical idea of trimmed means allows us to deal both with  heavy-tailed finite samples and with adversarial contamination. 

Suppose one is given $x_{1:n}=(x_1,\dots,x_n)\in \bX^n$ and a function $f:\bX\to\R$. If the $x_i$ are distributed according to a probability distribution $P$ over $\bX$, the sample mean corresponds to the following approximation:
\[Pf = \Ex_{X\sim P}f(X)\approx \frac{1}{n}\sum_{i=1}^nf(x_i),\]
which has the aforementioned problems. By contrast, for an integer $1\leq k<\frac{n}{2}$, the $k$-trimmed mean over $x_{1:n}$ is:
\begin{equation}\label{eq:deftrimmed}
    \widehat{T}_{n,k}(f, x_{1:n}) := \frac{1}{n-2k} \sum_{i=k+1}^{n-k} f(x_{(i)}),
\end{equation}
where $(\cdot)$ is a permutation of $[n]$ such that
\begin{equation*}
    f(x_{(1)}) \leq \cdots \leq f(x_{(n)}).
\end{equation*}
That is, the trimmed mean is the arithmetic mean of the terms that remain once the $k$ largest and $k$ smallest values of $f(x_i)$ have been removed. A large $k$ makes this estimator more robust to outliers, but also introduces some bias. 

The trimmed mean is a classical estimator in Robust Statistics in the sense of Huber  \cite{huber2011robust,stigler2010,huber1972,Stigler1973,Jaeckel1971,Jureckova1981,Hall1981}. More recently, variants of the trimmed mean have been used to estimate high dimensional means \cite{Lugosi2021} and covariances \cite{rico2022a} with non-asymptotic guarantees. The PhD thesis \cite{rico2022} also proves optimality properties of the trimmed mean for estimating the mean of a single function $f$ \cite[Chapter 2]{rico2022}. 

\subsection{This work} The main contribution of the present paper is to show that trimmed means lead to state-of-the-art finite-sample performance in three statistical problems. The first of these is what we call {\em uniform mean estimation:} it is a kind of ``metaproblem'' that appears in several statistical settings. The second problem is the estimation of the mean of a random vector under an arbitrary norm. The third and final problem is that of regression with quadratic loss. 

\subsubsection{Uniform mean estimation} To motivate this problem, we first note that bounding the supremum of an empirical process
\[\sup_{f\in \sF}\left|\frac{1}{n}\sum_{i=1}^nf(X_i) - Pf\right|\]
is a crucial step in the analysis of many problems in Statistics and Machine Learning \cite{geer2000empirical,Boucheron2013}. For instance, one simple way to analyze the empirical risk minimizer -- i.e. the minimizer of the empirical loss in (\ref{eq:statlearningsample}) --, it is standard to show that \begin{equation}\label{eq:statlearninganalysis}L_P(\widehat{\theta}_n) - \inf_{\theta\in\Theta}L_P(\theta)\leq 2\sup_{f\in\sF}\left|\frac{1}{n}\sum_{i=1}^nf(X_i) - Pf\right|\end{equation}
where $\sF=\{\ell(\theta,\cdot)\,:\, \theta\in\Theta\}$.

Now suppose we to replace the sample means in (\ref{eq:statlearningsample}) by some other estimator of $\Ex_{X\sim P}\ell(\theta,X)$. Abstractly, this leads to the problem of bounding 
\[\sup_{f\in \sF}\left|\widehat{E}_f(X_1,\dots,X_n) - Pf\right|\]
where now {\em each $\widehat{E}_f$ should be designed so as to minimize the above supremum}. In other words: given a family of functions $\sF$, we want to design estimators $\widehat{E}_f(X_1,\dots,X_n)$ for each the expectations of $Pf$, $f\in\sF$, so as to minimize the worst-case error. Reference \cite{minsker2018uniform} by Minsker is the only previous paper we are aware of on this problem. 

Our main result on uniform mean estimation is Theorem \ref{thm:uniformTM}, which shows that trimmed means give the best known bounds for this problem in the adversarial contamination setting. In particular, we improve the main result of \cite{minsker2018uniform} and obtain minimax-optimal dependence on moment parameters and the contamination level. Theorem \ref{thm:uniformTM} and some related work are discussed in detail in Section \ref{sec:uniform}.

\subsubsection{Vector mean estimation under arbitrary norms}\label{subsub:vectormean} In this problem, we assume that we have a (potentially corrupted) iid sample $X_{1},\dots,X_n$ from a high-dimensional distribution $P$ over $\R^d$, whose mean $\mu_P$ we want to estimate. The error in our estimate will be measured by an arbitrary norm $\|\cdot\|$. 

As already noted by Minsker \cite{minsker2018uniform}, vector mean estimation is closely related to uniform mean estimation with the function class $\sF$ corresponding to the dual unit ball of the norm $\|\cdot\|$. Using this connection, we present  in Section \ref{sec:vector} a trimmed-mean-based estimator for these vector means. Specifically, we obtain an estimator $\muhat_{n,k}$ whose error $\|\muhat_{n,k}-\mu_P\|$ is bounded by 
\begin{align}C\,\left(\Ex\limits_{X_{1:n}\stackrel{i.i.d.}{\sim}P}\left[\left\|\frac{1}{n}\sum_{i=1}^nX_i - \mu_P\right\|\right] +  \inf_{1\leq q\leq 2}\nu_q\left(\frac{\log(1/\alpha)}{n}\right)^{1-\frac{1}{q}} + \inf_{p>1}\nu_p\eps^{1-\frac{1}{p}}\right)\end{align}
with probability $1-\alpha$, where the $\nu_p$ are one-dimensional moment parameters of the distribution $P$. We will see that this result improves all known results for mean estimation under general norms \cite{Depersin2022,Lugosi2019} and slightly improves the best bounds for the Euclidean norm \cite{Lugosi2021}. Moreover, our bound has minimax-optimal dependence on the contamination level. Unfortunately, the estimator we devise is not computationally efficient. 

\subsubsection{Regression with quadratic risk: theory and heuristics} The third problem we consider is that of finding a function $f\in\sF$ making $\Ex_{(X,Y)\sim P}(Y-f(X))^2$ as small as possible, given a (possibly corrupted) sample from $P$.  

Under suitable technical conditions, we show that a trimmed-mean-based regression method achieves optimal dependence on the contamination level and on moment parameters. This method obtains ``fast''~rates, as in previous work by Lugosi and Mendelson \cite{Lugosi2019b} and Lecué and Lerasle \cite{Lecue2020}, with better dependence on problem parameters. In particular, we once again obtain minimax bounds for the dependence on the contamination level. This result is discussed in Section \ref{sec:regression} along with the related literature. 

Subsequently, we present in Section \ref{sec:experiments} trimmed-mean-based heuristics for robust linear regression. Experiments in the main text and the supplementary material show that our algorithm outperforms a similar method based on the median-of-means principle put forward by \cite{Lecue2020}. These experiments also give us insights on how optimize the performance of these heuristics. 

\subsection{More background}\label{sub:related} Before continuing, we give a general overview of the literature related to our paper. Sections \ref{sec:uniform}, \ref{sec:vector} and \ref{sec:regression} have more specific pointers to the literature. 

Catoni's original breakthrough on mean estimation \cite{Catoni2012} was motivated by his work with Audibert on robust linear regression \cite{audibert2011robust}. Similarly, the literature has focused both on fundamental mean estimation problems and on their applications to various statistical tasks. 

The problem of estimating vector means under the Euclidean norm has attracted much attention. Lugosi and Mendelson \cite{Lugosi2019a} were the first to achieve sub-Gaussian bounds for heavy-tailed finite samples using a method based on the so-called ``median-of-means''~construction. Starting with Hopkins \cite{hopkins2020mean}, a number of papers has presented computationally efficient versions of their method \cite{Cherapanamjeri2020,hopkins2020,Depersin2022}. Lugosi and Mendelson have also put forward a high-dimensional trimmed mean estimator \cite{Lugosi2021}; in particular, our analysis is indebted to their approach. Mean estimation for general norms \cite{Lugosi2019,Depersin2021}; is discussed in Section \ref{sec:vector} below. See also \cite{Lugosi2019c} for a survey of this area.

Robust methods for other statistical  problems have also been considered in various works \cite{audibert2011robust,brownlees2015,mourtada2021,Lecue2020,diakonikolas2019a}. We mention specifically the case of regression with quadratic loss, which was considered in Lugosi and Mendelson \cite{Lugosi2019b} and Lecu\'{e} and Lerasle \cite{Lecue2020} via approaches based on median-of-means. These papers are further discussed in Section \ref{sec:regression}, where we present our own results on this problem.

We briefly mention some additional lines of research. The estimation of covariance matrices is a special case of mean estimator; see \cite{mendelson2020} and the minimax-optimal bounds in \cite{abdalla2022,rico2022a}. Computationally efficient methods for Robust Statistics in high dimensions include the breakthrough by Diakonikolas et al. \cite{diakorobust}, and also \cite{dong2019,diakonikolas2022,hopkins2020} and many other references.  

Finally, we discuss the {\em adversarial contamination model} that we consider along with \cite{dong2019,diakonikolas2022,hopkins2020,diakorobust,Lugosi2021,Depersin2022}. This is a model for outliers in an i.i.d. sample which allows arbitrary changes to a small fraction of sample points before the statistician gets to see it (see \S \ref{sub:corruption} for a formal definition). This is a more demanding model than Huber's classical contamination model \cite{huber1965robust,huber2011robust}, where the uncontaminated data distribution $P$ is usually assumed known, but a small, random fraction of the sample is replaced with another i.i.d. sample from a different (unknown) distribution. There are at least two reasons why studying the adversarial model makes sense. One is that that there settings where the assumption that outliers are i.i.d. from some distribution is not realistic. The second reason is that, when the uncontaminated data distribution $P$ is unknown, then minimax-optimal lower bounds for the adversarial model are achieved (up to constant factors) by Huber-style contamination (see e.g. by Minsker \cite[Lemma 5.4]{minsker2018uniform}). In other words, if we do not know $P$, then the two contamination models are (nearly) equally difficult for the statistician from a minimax perspective.

\subsection{Organization} The remainder of the paper is organized as follows. Notation and terminology are presented in Section \ref{sec:prelim}. Our theoretical results on uniform mean estimation, vector mean estimation and regression are presented in Sections \ref{sec:uniform}, \ref{sec:vector} and \ref{sec:regression} (respectively), in which we also give additional background on these problems. Experiments on a regression heuristic based on trimmed means are summarized in Section \ref{sec:experiments} using algorithms described in Section \ref{sec:algorithms}. The main proof elements for our results are discussed in Section \ref{sec:proofs}. The appendix contains additional technical lemmata and the statements of some results we need. The supplementary material contains additional lemmata and a detailed description of our experiments. 

\section{Notation and terminology} \label{sec:prelim}
\subsection{Basics} $\N=\{1,2,3,\dots\}$ is the set of positive integers. For $n\in\N$, define $[n]:=\{1,2,\dots,n\}$. The cardinality of a finite set $S$ is denoted by $\# S$.

\subsection{Probabilities, expectations and samples} Given a probability space $(\bZ,\sZ,P)$, we write $Z\sim P$ to denote that $Z$ is a random element of $(\bZ,\sZ)$ with distribution $P$. If $f:\bZ\to \R$ is measurable, we use $Pf$, $Pf(Z)$ or $\Ex_{Z\sim P}f(Z)$ to denote the expectation (integral) of $f$ according to $P$. For $p\geq 1$, we write $L^p(P) = L^p(\bZ,\sZ,P)$ for the corresponding $L^p$ space.

Given $n\in\N$, the elements of $\bZ^n$ are denoted by $z_{1:n}=(z_1,z_2,\dots,z_n)$. We write
\[Z_{1:n}=(Z_1,\dots,Z_n)\stackrel{i.i.d.}{\sim}P\]
if the $Z_i$ are independent and identically distributed (i.i.d.) random elements of $(\bZ,\sZ)$ with common law $P$. 

\subsection{Adversarial contamination}\label{sub:corruption} This model comes from the CS literature \cite{diakorobust}. Given $n\in\N$ and $(\bZ,\sZ,P)$ as above, and also a parameter $\varepsilon\in [0,1)$, a random element $Z^{\varepsilon}_{1:n}$ of $\bZ^n$ is a $\varepsilon$-contaminated i.i.d. sample from $P$ if the following condition holds:
\[\mbox{ there exist }Z_{1:n}\stackrel{i.i.d}{\sim}P\mbox{ such that }\# \{i\in[n]\,:\, Z_i^\varepsilon\neq Z_i\}\leq \varepsilon\,n.\]


\subsection{Compatible measures and empirical processes}\label{sub:admissible} We need a technical condition to ensure the various suprema we consider are well defined. Given $p\geq 1$, we say that a probability measure $P$ over $(\bZ,\sZ)$ and a family $\sF$ of $\sZ$-measurable functions from $\bZ$ to $\R$ are $p$-compatible if $\sF\subset L^p(P)$ and there exists a countable subset $\sD\subset \sF$ such that any $f\in\sF$ is the limit of a sequence in $\sD$ that converges pointwise and in $L^p(P)$ norm. For $1$-compatible $\sF$ and $P$, we define the expectation of the empirical process indexed by $\sF$,
\begin{equation}\label{eq:defempirical}\Emp_n(\sF,P):=\Ex\limits_{Z_{1:n}\stackrel{i.i.d.}{\sim}P}\left[\sup_{f\in\sF}\left|\frac{1}{n}\sum_{i=1}^nf(Z_i) - Pf\right|\right]\end{equation}
and the Rademacher complexity,
\begin{equation}\label{eq:defrademacher}\Rad_n(\sF,P):=\Ex\limits_{\substack{ \epsilon_{1:n}\stackrel{i.i.d.}{\sim}U(\{-1,1\}) \\ Z_{1:n}\stackrel{i.i.d.}{\sim}P}}\left[\sup_{f\in\sF}\left|\frac{1}{n}\sum_{i=1}^n \epsilon_i f(Z_i)\right|\right],
\end{equation}
where it is implicit in the expectation that the $Z_{1:n}$ and $\epsilon_{1:n}$ are independent. Definitions (\ref{eq:defempirical}) and (\ref{eq:defrademacher}) are related by the first part of the following classical results:

\begin{theorem}\label{thm:symcont} Assume that a measure $P$ over $(\bX, \sX)$ is 1-compatible with a family of measurable functions $\sG$ from $\bX$ to $\R$.
\begin{enumerate}
\item {\em (Symmetrization and Desymmetrization; \cite[Lemma 11.4]{Boucheron2013})} $\Emp_n(\sG)\leq 2\Rad_n(\sG)$. Moreover, if $Pg=0$ for all $g\in \sG$, then $\Rad_n(\sG)\leq 2\Emp_n(\sG)$.
\item {\em (Ledoux-Talagrand contraction; \cite[Theorem 11.6]{Boucheron2013})} If $\tau:\R\to\R$ is $L$-Lipschitz and satisfies $\tau(0)=0$, and we set $\tau \circ \sG:=\{\tau \circ g\,:\, g\in\sG\}$, then
$\Rad_n(\tau \circ \sG)\leq L\,\Rad_n(\sG)$. 
\end{enumerate}\end{theorem}

\section{Uniform mean estimation via trimmed means}
\label{sec:uniform}

This section discusses the following problem first posed by Minsker \cite{minsker2018uniform}.

\begin{problem}[Uniform mean estimation]\label{problem:uniform} One is given a measurable space $(\bX,\sX)$; and family $\sF$ of measurable functions from $\bX$ to $\R$; and a family $\sP$ of probability distributions over $(\bX,\sX)$ such that $\sF$ and $P$ are $1$-compatible for all $P\in\sP$. For a sample size $n\in\N$; a contamination parameter $\varepsilon\in [0,1/2)$; and a confidence level $1-\alpha\in (0,1)$; the goal is to find a family of measurable functions (estimators) \[\{E_f:\bX^n\to \R\,:\,f\in\sF\}\]
with the following property: for any $P\in\sP$, if $X_{1:n}^\varepsilon$ is a $\varepsilon$-contaminated sample from $P$ (cf. \S \ref{sub:corruption}), then:
\begin{equation}\label{eq:equationproblem1}\Pr\left\{\sup_{f\in\sF}\left|E_f(X_{1:n}^\varepsilon) - Pf\right|\leq \Phi_P\right\}\geq 1-\alpha;\end{equation}
with $\Phi_P=\Phi_P(\sF,\alpha,n,\varepsilon)$ as small as possible.\end{problem}

We assume implicitly that the supremum in the above event is a random variable (i.e. it is a measurable function). Importantly, the estimators $E_f$ are not allowed to depend on the specific measure $P\in\sP$, but may depend on $\sP$, $\sF$, $\alpha$, $n$ and $\varepsilon$. In fact, the dependence on $\alpha$ is unavooidable in our setting even if $\sF$ consists of a single function \cite[Theorem 3.2]{Devroye2016}. On the other hand, it is not known if an optimal mean estimator must depend on $\eps$, and we put this question to one side for the remainder of the paper.  

Minsker \cite{minsker2018uniform} motivates Problem \ref{problem:uniform} via maximum likelihood estimation, and he also applies his result to vector mean estimation (discussed in the next section). Of course, many results exist on this Problem for the case where $E_f$ is the sample mean and $\eps = 0$: this includes Gaussian approximations \cite{vandervaart1996,Chernozhukov2014} and concentration inequalities \cite{talagrand1996,Bousquet2002}. However, the whole point of this paper is that we do {\em not} expect sample means to be optimal estimators, even when there is no contamination.


In what follows, we consider bounds on $\Phi_P(\sF,\alpha,n,\varepsilon)$ in terms of moment conditions and measures of ``complexity''~ of the class $\sF$. For $1$-compatible $\sF$ and $\Pm$ as above, and exponents $p\geq 1$, we define the following (potentially infinite) moment quantities:
\begin{equation}\label{eq:defnup}\nu_p(\sF,P):= \sup_{f\in \sF}(\Pm\,|f-\Pm f|^p)^{\frac{1}{p}} = \sup_{f\in\sF}\|f-Pf\|_{L^p(\Pm)}.\end{equation}
As for our complexity measure of $\sF$ under $\Pm$, we take the expectation of the supremum of the empirical process over an uncontaminated sample (\ref{eq:defempirical}). The question we address is: how small can we expect $\Phi_P(\sF,\alpha,n,\varepsilon)$ to be in terms of the above parameters? The next theorem offers an answer to this question.

\begin{theorem}[Proof in \S \ref{sub:proof:uniformTM}]\label{thm:uniformTM} In the setting of Problem \ref{problem:uniform}, let $\sP$ denote the family of all probability distributions $P$ over $(\bX,\sX)$ that are $1$-compatible with $\sF$ and such that $\Emp_n(\sF,\Pm)<+\infty$. If
\[ \phi := \frac{1}{n} \left(\lfloor \eps n \rfloor + \left\lceil \ln\frac{2}{\alpha} \right\rceil \vee \left\lceil \frac{\left( \frac{1}{2} - \eps \right) \wedge \eps}{2} n \right\rceil\right) < \frac{1}{2},\]
then the family of estimators
\[E_f(x_{1:n}):=\Tmhat_{n,\phi n}(f;x_{1:n})\,\,(x_{1:n}\in\bX^n,\,f\in\sF),\]
satisfies the following property: if $X_{1:n}^\varepsilon$ is an $\eps$-contaminated sample from some $P\in \sP$, then
\begin{equation*}\Pr\left\{\sup_{f\in\sF}\left|\Tmhat_{n,\phi n}(f,X_{1:n}^\varepsilon) - Pf\right|\leq \Phi_P\right\}\geq 1-\alpha,\end{equation*}
where $\Phi_P = \Phi_P(\sF,\alpha,n,\varepsilon)$ is defined as follows:
\[\Phi_P = C_\eps \left(8\Emp_{n}(\sF,\Pm) + \inf_{q \in [1,2]} \nu_q(\sF,\Pm)\left(\frac{ \ln\frac{3}{\alpha}}{n}\right)^{1-\frac{1}{q}} + \inf_{p\geq 1} \nu_p(\sF,\Pm)\eps^{1-\frac{1}{p}}\right),\]
with $C_\eps := 384\left(1 + \frac{\eps}{ \eps \wedge \left(\frac{1}{2}-\eps\right) }\right)$.\end{theorem}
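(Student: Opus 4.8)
The plan is to combine a deterministic reduction from the contaminated sample to the clean one with a truncation argument that turns the trimmed mean into a clipped empirical mean, whose fluctuations are then controlled by the empirical process $\Emp_n(\sF,\Pm)$. First I would remove the contamination. Writing $m=\lfloor \eps n\rfloor$ and $k=\phi n$, the order statistics of $f(X^\eps_{1:n})$ and of the underlying clean sample $f(X_{1:n})$ satisfy $f(X_{(j-m)})\le f(X^\eps_{(j)})\le f(X_{(j+m)})$ for $j\in[k+1,n-k]$, since the adversary controls at most $m$ coordinates. Summing over $j=k+1,\dots,n-k$ sandwiches $\Tmhat_{n,k}(f,X^\eps_{1:n})$ between two clean \emph{asymmetric} trimmed sums that discard $k+m$ terms from one end and $k-m$ from the other. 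Because $k-m=\lceil \ln(2/\alpha)\rceil\vee\lceil((\tfrac12-\eps)\wedge\eps)n/2\rceil\ge 1$, these are genuine trimmed means of the clean i.i.d.\ sample, and the whole problem reduces to a two-sided deviation bound for clean (but off-center) trimmed means, uniformly over $f\in\sF$.

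For the clean side I would pass from order statistics to truncation at \emph{population} quantiles. Fix a level $\gamma\asymp\phi$, set $Q^+_f:=Q_f(1-\gamma)$ and $Q^-_f:=Q_f(\gamma)$ (population quantiles of $f$ under $\Pm$), and let $g_f:=(f\vee Q^-_f)\wedge Q^+_f$ be the clipped function. On the ``good event'' $\sG_f$ that the number of sample points with $f(X_i)>Q^+_f$ (resp.\ $<Q^-_f$) does not exceed the corresponding trimming budget, the clean trimmed sum and the clipped empirical mean $\frac1n\sum_i g_f(X_i)$ differ only by a controlled amount proportional to the quantile gap and the trimmed fraction. The main obstacle is to guarantee $\sG_f$ \emph{simultaneously} for all $f\in\sF$ without paying a VC-dimension price: the naive bound on $\sup_f|\frac1n\sum_i\Ind[f(X_i)>Q^+_f]-\gamma|$ is an empirical process over indicators. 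I would circumvent this by sandwiching each indicator between two Lipschitz ramps interpolating between neighbouring quantile levels, so that the Ledoux--Talagrand contraction principle and symmetrization (Theorem \ref{thm:symcont}) bound the resulting empirical process by $\Rad_n(\sF,\Pm)$, hence by $\Emp_n(\sF,\Pm)$; the binomial/McDiarmid tail of these counts, together with the choice $k-m\ge\lceil\ln(2/\alpha)\rceil$, makes $\sG_f$ fail with probability $O(\alpha)$ uniformly.

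It then remains to control the clipped empirical mean itself. Since each $g_f$ is a $1$-Lipschitz transform of $f$ and is bounded with range $\Delta_f=Q^+_f-Q^-_f$, symmetrization followed by contraction gives $\Ex\sup_f|\frac1n\sum_i g_f(X_i)-\Pm g_f|\lesssim \Rad_n(\sF,\Pm)$, which Theorem \ref{thm:symcont} converts into the $\Emp_n(\sF,\Pm)$ term; a bounded-differences concentration for the bounded functions $g_f$ upgrades this to a high-probability statement. A quantile--moment inequality ($Q_f(1-\gamma)-\Pm f\lesssim \nu_p(\sF,\Pm)\,\gamma^{-1/p}$, from Markov applied to the $p$-th central moment) turns the resulting $\Delta_f\sqrt{\ln(1/\alpha)/n}$ into the confidence term $\inf_{q\in[1,2]}\nu_q(\sF,\Pm)(\ln(3/\alpha)/n)^{1-1/q}$. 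Finally, the same inequality bounds the \emph{truncation bias} $|\Pm g_f-\Pm f|\lesssim \nu_p(\sF,\Pm)\,\gamma^{1-1/p}$, and since $\gamma\asymp\eps+\ln(1/\alpha)/n$ this splits into the contamination term $\inf_{p\ge1}\nu_p(\sF,\Pm)\eps^{1-1/p}$ and (again) the confidence term.

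Assembling the pieces — the contamination reduction, the good-event bound, the clipped-mean fluctuation, and the truncation bias — via a union bound over the $O(1)$ exceptional events (which accounts for the $\ln(3/\alpha)$ in $\Phi_P$ versus $\ln(2/\alpha)$ in $\phi$) and optimizing over $p\in[1,\infty)$ and $q\in[1,2]$ yields the stated bound; tracking the factors of two from the two-sided estimates, symmetrization and contraction produces the explicit constant $C_\eps$, whose blow-up as $\eps\to\tfrac12$ reflects the $((\tfrac12-\eps)\wedge\eps)$ floor built into $\phi$. I expect the uniform control of the good event $\sG_f$ (the Lipschitz-ramp/contraction step) to be the crux, since it is precisely where the dimension-freeness of the final bound is won or lost.
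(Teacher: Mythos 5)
Your overall architecture (reduce contamination to the clean sample, replace the trimmed mean by a clipped empirical mean, control the clipped process by $\Emp_n(\sF,\Pm)$, and pay a Markov-type bias for the clipping) is the same philosophy as the paper, and your order-statistic sandwich $f(X_{(j-m)})\le f(X^\eps_{(j)})\le f(X_{(j+m)})$ is a perfectly good substitute for the paper's deterministic ``Bounding Lemma.'' However, there is a genuine gap exactly at the step you flag as the crux. You clip at \emph{population quantiles} $Q_f(\gamma),Q_f(1-\gamma)$ and propose to control $\sup_f\bigl|\frac1n\sum_i\Ind\{f(X_i)>Q_f^+\}-\gamma\bigr|$ by sandwiching the indicator between Lipschitz ramps ``interpolating between neighbouring quantile levels'' and invoking contraction. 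The Lipschitz constant of such a ramp is the reciprocal of the quantile gap $Q_f(1-\gamma)-Q_f(1-2\gamma)$, which is not uniformly bounded over $\sF$ — it can be arbitrarily large, and is infinite if some $f$ has an atom at its $(1-\gamma)$-quantile — so contraction does \emph{not} return $\Rad_n(\sF,\Pm)$ but rather $\Rad_n(\sF,\Pm)$ times an uncontrolled factor. (The clipping map also varies with $f$, so the contraction principle as stated in Theorem \ref{thm:symcont} does not directly apply.) The paper's Counting Lemma resolves precisely this: the truncation level $M$ is \emph{not} a quantile but is chosen as the maximum of a Markov-inequality bound $\sim\nu_p(\sF,\Pm)(n/t)^{1/p}$ and a complexity-driven level $\sim n\,\Emp_n(\sF,\Pm)/t$, applied to the \emph{centered} class $\{f-Pf\}$ with a single ramp of width $M/2$ and Lipschitz constant $2/M$; the second branch of the maximum is exactly what forces $\Rad_n(\tau_M\circ\sF^o,\Pm)/M\le t/(64n)$ and makes the contraction step close. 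Without an adaptive, complexity-aware choice of the clipping level, the uniform good-event bound does not go through.

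A second, quantitative gap: your ``bounded-differences concentration for the bounded functions $g_f$'' gives a deviation of order $\sup_f\Delta_f\sqrt{x/n}$ with $\Delta_f\lesssim\nu_q(\sF,\Pm)\,\gamma^{-1/q}$; with $\gamma\asymp x/n$ this is $\nu_q(\sF,\Pm)(x/n)^{1/2-1/q}$, not the claimed $\nu_q(\sF,\Pm)(x/n)^{1-1/q}$ — at $q=2$ it does not even vanish as $n\to\infty$. To get the stated confidence term one must use a Bernstein-type bound (the paper uses Bousquet's inequality, Theorem \ref{thm:bousquet}) in which only the additive correction is multiplied by the range $M$, while the $\sqrt{x/n}$ term carries the \emph{variance} of the clipped class, interpolated via $(M\wedge|f-Pf|)^2\le M^{2-q}|f-Pf|^q$; this is the computation in Lemma \ref{lemma:boundeta}. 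Both gaps are repairable, but the repair is essentially the paper's Counting Lemma plus the choice of $M_j$ in Lemma \ref{lemma:boundeta}, i.e., the substantive content of the proof.
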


To understand this result, consider first the case where $\sF=\{f\}$ consists of a single function. The value of $\Phi_P$ is (up to constant factors) a sum of two terms: a {\em random fluctuations term} and a {\em contamination term}:
\begin{equation}\label{eq:twoterms}\inf_{1\leq q\leq 2}\nu_p(\{f\},\Pm)\,\left(\frac{1}{n}\ln\frac{1}{\alpha}\right)^{1-\frac{1}{q}}\mbox{ and }\inf_{p\geq 1}\nu_q(\{f\},\Pm)\,\varepsilon^{1-\frac{1}{p}}, \mbox{ respectively.}\end{equation}
The necessity of these two terms follows from \cite[Theorem 3.1]{Devroye2016} and \cite[Lemma 5.4]{minsker2018uniform}\footnote{The lower bound in \cite[Lemma 5.4]{minsker2018uniform} is given for $p\in [2,3]$, but the same proof works for all $p>1$.}. Therefore, the trimmed mean achieves minimax-optimal bounds in the single function case (this is proven by other means in \cite[Chapter 2]{rico2022}). 

It is interesting to contrast this result with what is known about the median of means (MoM) construction \cite{Alon2016,Devroye2016}, which is often used to obtain robust mean estimators. MoM consists of splitting the sample into $K$ parts, taking the sample mean of each part, and then taking the median of the $K$ means. For $K\approx \varepsilon n + \ln(1/\alpha)$, this estimator achieves:
\[\Phi_P(n,\alpha,\varepsilon)= C\,\inf_{1\leq p\leq 2}\nu_p(\{f\},\Pm)\,\left(\frac{1}{n}\ln\frac{1}{\alpha}+\varepsilon\right)^{1-\frac{1}{p}},\]
with $C>0$ universal; this follows e.g. \cite[Lemma 2]{bubeck2013} (for $\varepsilon=0$) combined with the fact that taking $K\gg \varepsilon n$ naturally makes the estimator robust (as observed in e.g. \cite{Lecue2020}). In general, this bound is strictly worse than the optimal (\ref{eq:twoterms}). 

Consider now general function classes $\sF$. To the best of our knowledge, the only paper dealing specifically with Problem \ref{problem:uniform} is \cite{minsker2018uniform} by Minsker. That paper presents an estimator based on a combination of influence functions and median of means. His estimator is optimally robust (i.e. has an optimal contamination term) under $p$-th moment conditions in the range $p\in [2,3]$. Theorem \ref{thm:uniformTM} improves on Minsker's results \cite{minsker2018uniform} in terms of the moment conditions in the bounds. For instance, when $\eps=0$ our bound implies a non-trivial ``sub-Gaussian'' error bound
\[\Phi_P \leq C_\eps \left(8\Emp_{n}(\sF,\Pm) +  \nu_2(\sF,\Pm)\sqrt{\frac{ \ln\frac{3}{\alpha}}{n}}\right)\]
when both $\Emp_{n}(\sF,\Pm)$ and $\nu_2(\sF,\Pm)$ are finite. By contrast, Minsker \cite{minsker2018uniform} requires higher moment assumptions and some knowledge of $\nu_2(\sF,\Pm)$ to obtain a similar bound. These seem to be intrinsic aspects of his analysis, as it relies on Barry-Esseen bounds (which require higher moments) and on a version of Catoni's estimator (which demand some knowledge of the variance). We also go beyond Minsker by obtaining nontrivial results when $\nu_2(\sF,\Pm)=+\infty$, and improved dependence on the contamination level under $p$-th moment assumptions for $p>3$. 

In the next section, we compare previous results on vector mean estimation by several authors, and show that our application of Theorem \ref{thm:uniformTM} in this setting leads to improved results.

\begin{remark}Theorem \ref{thm:uniformTM} has other potential applications besides the ones we discuss at length in this paper. As one example, we mention the problem of estimating probability distributions according to {\em integral probability metrics} \cite{Sriperumbudur2012}. Let $\sF$ be an arbitrary family of measurable functions from $\bX$ to $\R$, and let $\Delta_{\sF}$ denote the set of all probability distributions over $\bX$ that integrate all $f\in\sF$. $\sF$ defines an integral probability (semi)metric over $\Delta_{\sF}$ via the recipe:
\[D_{\sF}(P,Q):=\sup_{f\in\sF}\left|\int_{\bX}\,f(x)\,(Q-P)(dx)\right|\,\,(P,Q\in\Delta_{\sF}).\]
Popular examples of such metrics include the $L^1$ Wassertein metric and kernel-based metrics discussed in \cite{Sriperumbudur2012,Sriperumbudur2016}. Estimating $D_{\sF}(P,Q)$ from contaminated random samples from $P$ and $Q$ can be reduced to Problem \ref{problem:uniform}.\end{remark}

\begin{remark}[Is the complexity term optimal?]\label{rem:complexityuniform} An important theoretical problem is whether the complexity term $\Emp_n(\sF,P)$ that appears in \cite{minsker2018uniform} and our bound is necessary. We only know that this is the case for mean estimation for Gaussian vectors \cite{Depersin2021}. On the other hand, our Theorem follows from a more general result that does not even require $\Emp_n(\sF,\Pm)<+\infty$ or $\nu_p(\sF,\Pm)<+\infty$ for any $p>1$; see Theorem \ref{thm:tmlinear} for details.\end{remark}

\section{Vector mean estimation via trimmed means}\label{sec:vector}
In this section, we apply the results of Section \ref{sec:uniform} to the problem of estimating the mean of a random vector. As we will see, our results are optimal for the Euclidean norm, and improve the state-of-the-art for more general norms. 

\begin{problem}[Vector mean estimation]\label{problem:vector} Let $(\bX,\|\cdot\|)$ be a separable, reflexive Banach space with dual $\bX^*$; $\sX$ is the Borel $\sigma$-field on $\bX$. $\sP$ denotes the family of probability measures over $(\bX,\sX)$ with $P\|\cdot\| = \Ex_{X\sim P}\|X\|<+\infty$, so that for all $P\in \sP$ there exists a unique $\mu_P\in \bX$, called the mean of $P$, such that $f(\mu_P) = \Ex_{X\sim P}f(X)$ for all $f\in \bX^*$. 

For a sample size $n\in\N$; a contamination parameter $\varepsilon\in [0,1/2)$; and a confidence level $1-\alpha\in (0,1)$; the goal is to find a 
function $\muhat:\bX^n\to\bX$ such that, if $X^\eps_{1:n}$ is a $\eps$-contaminated sample from some $P\in \sP$,
\begin{equation}\label{eq:vectormean}\Pr\{\|\muhat(X_{1:n}^\eps) - \mu_P\|\leq \Phi_P(n,\alpha,\varepsilon)\}\geq 1-\alpha,\end{equation}
with $\Phi_P(n,\alpha,\varepsilon)$ as small as possible. \end{problem}

To avoid technicalities, we will interpret the probability (\ref{eq:vectormean}) as an inner probability.  Moreover, we allow out estimator $\muhat$ to depend on $1-\alpha$ and $\eps$, as discussed in the paragraph following Theorem \ref{thm:uniformTM}. 

Our main result on Problem \ref{problem:vector} is the following theorem.

\begin{theorem}\label{thm:vector}In the setting of Problem \ref{problem:vector}, let $\bB^*$ denote the dual unit ball of $\bX$. For $1\leq k< \frac{n}{2}$, define $\muhat_{n,k}:\bX^n\to\bX$ so that
\[\muhat_{n,k}(x_{1:n}) \in {\rm arg }\min_{\mu\in\bX}\,\left(\sup_{f\in \bB^*}\left|\Tmhat_{n,k}(f,x_{1:n})-f(\mu)\right|\right)\,\,(x_{1:n}\in\bX^n).\]
If
\[ \phi := \frac{1}{n} \left(\lfloor \eps n \rfloor + \left\lceil \ln\frac{2}{\alpha} \right\rceil \vee \left\lceil \frac{\left( \frac{1}{2} - \eps \right) \wedge \eps}{2} n \right\rceil\right) < \frac{1}{2},\]
then $\muhat_{n,  \phi n }$ satisfies
\begin{equation*}\Pr\{\|\muhat_{n, \phi n}(X_{1:n}^\eps) - \mu_P\|\leq \Phi_P(n,\alpha,\varepsilon)\}\geq 1-\alpha,\end{equation*}
where 
\[\Phi_P = 2C_\eps \left(8\mathbb{E}\left\|\frac{1}{n}\sum_{i=1}^n X_i - \mu_P \right\| + \inf_{q \in [1,2]} \nu_q\left(\frac{ \ln\frac{3}{\alpha}}{n}\right)^{1-\frac{1}{q}} + \inf_{p\geq 1} \nu_p\eps^{1-\frac{1}{p}}\right),\]
with $C_\eps$ as in in Theorem \ref{thm:uniformTM} and $\nu_p$ is the largest $L^p$ norm of a one-dimensional marginal of $X-\mu_P$:
\[ \nu_p = \nu_p(\bB^*,\Pm) := \sup_{f\in \bB^*}(\Pm\,|f(X-\mu_\Pm)|^{p})^\frac{1}{p}.\]
\end{theorem}

The proof of this result is a straightforward application of Theorem \ref{thm:uniformTM}; see \SMDetailsVectorMean\ for details. We also show in that Section that $\muhat_{n,k}$ is a measurable function when $\bX=\R^d$ for some $d\in \N$.

Theorem \ref{thm:vector} allows us to recover and improve the best known results in the literature. For the case where $(\bX,\|\cdot\|)$ is a Hilbert space and $P\|\cdot\|^2<+\infty$, the estimator by Lugosi and Mendelson \cite[Theorem 2]{Lugosi2021} achieves a slightly worse error rate\footnote{The dependence of their theorem on $\eps$ is worse than shown here, but the bound we present can be deduced from their methods. On the other hand, \cite[Theorem 2]{Lugosi2021} seems to be missing the condition $\eps<1/2$, which is needed for nontrivial bounds.}
\[\Phi^{\rm LM}_P = C_\eps \left(\sqrt{\frac{{\rm tr}(\Sigma_P)}{n}} + \nu_2\sqrt{\frac{1}{n}\ln\frac{3}{\alpha}} + \inf_{p\geq 2} \nu_p\eps^{1-\frac{1}{p}}\right),\]
where $\Sigma_P$ is the covariance matrix (in particular, $\nu_2$ is the largest eigenvalue of $\Sigma_P$). Notice that 
\[\sqrt{\frac{{\rm tr}(\Sigma_P)}{n}}  = \sqrt{\mathbb{E}\left\|\frac{1}{n}\sum_{i=1}^n X_i - \mu_P \right\|^2}\geq \mathbb{E}\left\|\frac{1}{n}\sum_{i=1}^n X_i - \mu_P \right\|\]
by Jensen's inequality. Our estimator also allows for a broader range of $p$ in the contamination term. 

For general Banach spaces, the best result up to this paper was achieved by median-of-means-based estimator of Depersin and Lecu\'{e} \cite{Depersin2021}, which obtains
\[\Phi^{\rm DL}_P = C \left(\mathbb{E}\left\|\frac{1}{n}\sum_{i=1}^n X_i - \mu_P \right\|+ \nu_2\sqrt{\eps + \frac{\ln\frac{3}{\alpha}}{n}}\right)\]
under restrictions on $\eps$ that imply $\eps<1/16$. In contrast to this, our theorem  has better dependence on $\eps$ and the moment parameters $\nu_q$; in fact, our dependence is optimal because of (\ref{eq:twoterms}). 

To the best of our knowledge, Theorem \ref{thm:vector} is also the first result on vector mean estimation that does not require second moments, in either setting considered above. 

\begin{remark}Similarly to \cite{Depersin2021}, the estimator $\muhat_{n,k}$ is the minimizer of a convex function. We believe that the optimization heuuristics from \cite{Depersin2021} could be adapted to compute our estimator. A more interesting question that is beyond the scope of this work is whether our estimator can be provably computed in polynomial time. Computationally efficient estimators based on median-of-means are given in  \cite{hopkins2020mean,Cherapanamjeri2020,Depersin2022}. \end{remark}

\begin{remark}In \cite{Depersin2021}, Depersin and Lecué study the behavior of a median of means estimator for a variant of Problem \ref{problem:vector} where $P$ is symmetrical around $\mu_P$ and no moment assumptions are made. Their assumptions \cite[Assumption 3]{Depersin2021} are suitable for their method, but not for the trimmed mean, which seems to require at least weak moment assumptions. Therefore, we do not pursue this direction in this article.\end{remark}

\section{Quadratic risk minimization}
\label{sec:regression}

This section discusses the problem of regression with quadratic loss as studied by Lugosi and Mendelson \cite{Lugosi2019b} and Lecu\'{e} and Lerasle \cite{Lecue2020}. In this setting, we consider probability measures $P$ over product spaces $\bX\times \R$; and we use $P_\bX$ and $P_{\R}$ to denote the respective marginals.

\begin{problem}[Regression with quadratic loss]\label{problem:regression} One is given a measurable space $(\bX,\sX)$; a convex family $\sF$ of measurable functions from $\bX$ to $\R$; and a family of probability measures $\sP$ over $(\bX\times \R,\sX\times \sB)$ with the following property: for all $P\in\sP$, $\sF$ and $P_{\bX}$ are $2$-compatible; $\sF$ is closed in $L^2(P_{\bX})$; and $P_{\R}$ has a finite second moment. 

In this setup, define (for each $P\in \sP$):
\begin{equation}
    \label{def:fstar}
    f^\star_{P} := \argmin_{f \in \sF} \Ex_{(X,Y) \sim P} \left( f(X) - Y \right)^2.
\end{equation}
For a sample size $n\in\N$, a contamination parameter $\eps\in [0,1/2)$; and a confidence level $1-\alpha\in (0,1)$; 
find a mapping \[ F_n:(\bX\times \R)^n\to \sF\]
with the following property: for any $P\in\sP$, if $Z^\eps_{1:n} := \left\{(X_i^\varepsilon,Y_i^\varepsilon)\right\}_{i\in [n]}$ is an $\varepsilon$-contaminated sample from $P$ (cf. \S \ref{sub:corruption}), then the function $\widehat{f}^\varepsilon_n:=F_n(Z^\eps_{1:n})$ achieves
\begin{equation}\label{eq:equationproblem2}\Pr\left\{ \left\| \widehat{f}^\varepsilon_n - f_{P}^\star \right\|_{L^2(P_{\bX})} \leq \Phi_P\right\}\geq 1-\alpha,\end{equation}
with $\Phi_P:=\Phi_P(\sF,\alpha,n,\varepsilon)$ as small as possible.\end{problem}

Let us make some technical comments about this problem. Firstly, as in the previous sections, we allow $\widehat{f}^\varepsilon_n$ to depend on the contamination level and on the desired confidence level, for the same reasons pointed out right after Theorem \ref{thm:uniformTM}.

Secondly, to avoid delicate measurability issues regarding $F_n$, the probability in (\ref{eq:equationproblem2}) should be interpreted as a inner probability. 

Thirdly, it follows from the fact that $\sF$ is convex and closed in $L^2(P_{\bX})$ that $f^\star_P$ is uniquely defined up to a $P_{\bX}$-null set. With these observations, it is clear that Problem \ref{problem:regression} is well-posed. 

A fourth comment is that we will also consider a variant of Problem \ref{problem:regression} where the goal is to minimize {\em excess risk}. Letting:
\[R_P(f):=P\,(f(X)-Y)^2\,\,(f\in\sF),\]
one can check that
\[\forall f\in\sF\,:\, \left\| f - f^\star_P \right\|_{L^2(P_{\bX})}^2\leq R_P(f) - R_P(f^\star_P) .\]
Therefore, results on the excess risk of $f=\widehat{f}^\eps_n$ also bound the distance between $\widehat{f}^\eps_n$ and $f_P^*$. Our main result on the regression problem, Theorem \ref{thm:regressionbounds} below, will give both types of bounds. In what follows we present the definitions and conditions we need to present our solution to Problem \ref{problem:regression}. The following concrete example will serve to illustrate our discussion. 

\begin{example}[Linear regression with independent errors]\label{example:linear} Let $\bX=\R^d$ with the Borel $\sigma$-field $\sX$. We consider the family of linear functions 
\[\sF:=\{f_{\beta}(\cdot) = \langle \cdot,\beta\rangle\,:\,\beta\in\R^d\}.\]
$\sP$ consists of the family of distributions $P$ such that $\Ex_{(X,Y)\sim P}[\|X\|_2^2+Y^2]<+\infty$ and, given $(X,Y)\sim P$, there exists $\beta_P^\star\in\Theta$ with:
\[Y = \langle \beta_P^\star,X\rangle + \xi_P,\mbox{ where }\xi_P\mbox{ is mean zero and independent from }X.\]
In this case, $f^\star_P(\cdot) = \langle \cdot,\beta_P^{\star}\rangle$ and the excess risk is:
\[R_P(f_\beta) - R_P(f^\star_P) = \|\langle \cdot,\beta -\beta^\star_P\rangle\|_{L^2(P_\bX)}^2 = \langle \beta-\beta^\star_P,\Sigma_P\,(\beta-\beta^\star_P)\rangle,\]
where $\Sigma_P = \Ex_{(X,Y)\sim P}XX^T$ is the population design matrix. We set $\sigma_P^2 = \Ex\,\xi_P^2$.\end{example}

\subsection{Relevant parameters}\label{sub:relevant} As with uniform mean estimation, the analysis of Problem \ref{problem:regression} will depend on moment bounds and complexity parameters of the function class $\sF$. As in previous work, the right parameters to consider are localized \cite{massart2000some,mendelson2015learning,Lugosi2019,Lecue2020}. 

\subsubsection{Some notation}For $f\in\sF$, let \begin{equation}\label{eq:defell}\ell_f(x,y):= (f(x)-y)^2 ((x,y)\in\bX\times \R).\end{equation}
The excess risk of $f$ is
\[R_P(f) - R_P(f_P^{\star}) = P\,(\ell_f-\ell_{f^\star_P}).\]
Now let $\xi_P(x,y):=y-f_P^{\star}(x)$ denote the ``regression residual''~at the optimal $f^\star_P$ (this coincides with the $\xi_P$ in Example \ref{example:linear}), and set:
\begin{equation}\label{eq:multiplierpart}\mt_f(x,y)= \mt_{f,P}(x,y):= \xi_P(x,y)\,(f(x) - f^\star_P(x))\,\,(f\in\sF,\,(x,y)\in \bX\times \R).\end{equation}
Then the difference
\[\ell_f(x,y) - \ell_{f^\star_P}(x,y) = (f(x) - f_P^{\star}(x))^2 - 2\mt_{f,P}(x,y)\]
consists of a ``quadratic term'' and a ``multiplicative term.'' As in \cite{lecue2013learning}, we note that analyzing the standard empirical risk minimizer -- or other risk minimization procedures -- requires {\em lower bounds} on the quadratic part and {\em upper bounds} on the multiplier part. 

\subsubsection{Localization, complexities and critical radii}\label{subsub:localizationparameters} Since Massart \cite{massart2000some} it has been known that local analyses of regression problems lead to the best results. Following Lecué and Mendelson \cite{lecue2013learning}, we consider the local parameters that are relevant to the analysis of quadratic and multiplier parts of our process. What makes these parameters ``local''~is that they are parameterized by the distance to $f^\star_P$. Specifically, define, for $r>0$:
\begin{align*}
    \sF_{\qd}( r, P ) &:= \left\{
    f - f_P^\star \,:\, f\in \sF, ~ \|f-f_P^\star\|_{L^2(P_\bX)} = r \right\},\\
    \sF_{\mt}( r, P ) &:= \left\{  \mt_{f,P} - P\mt_{f,P}  : f \in \sF, ~ \|f-f_P^\star\|_{L^2(P_\bX)}\leq r \right\}.
\end{align*}
As in \cite{Lecue2020}, we use Rademacher complexities (\ref{eq:defrademacher}) to measure the size of these function families. Given constants $\delta_{\qd},\delta_{\mt}>0$, we define the {\em critical radii}
\begin{align*} r_{\qd}(\delta_{\qd},\sF,P)&:=\inf\{r>0\,:\,\sF_{\qd}(r, P)\neq \emptyset\mbox{ and }\Rad_n(\sF_{\qd}(r, P),P)\leq \delta_{\qd}\,r\},\\ 
 r_{\mt}(\delta_{\mt},\sF,P)&:= \inf\{r>0\,:\,\Rad_n(\sF_{\mt}(r, P),P)\leq \delta_{\mt}\,r^2\},
\end{align*}
where (by convention) the infimum of an empty set is $+\infty$. Typically, we will take $\delta_{\qd},\delta_{\mt}<1$. In this case, the intuition is that  $r_{\qd}$ is the smallest radius $r$ at which the quadratic process is significantly larger than $r^2$. The other critical radius $r_{\mt}$ is the smallest radius $r$ at which the multiplier empirical processes becomes small when compared to $r^2$.

\begin{remark}[Critical radii in linear regression with independent errors]\label{rem:linearcomplexity} In Example \ref{example:linear}, one can check that:
\[\Rad_n(\sF_{\qd}(r, P),P)\leq r\,\sqrt{\frac{{\rm tr}(\Sigma_P)}{n}}\mbox{ and }\Rad_n(\sF_{\mt}(r, P),P)\leq r\,\sigma_P\sqrt{\frac{{\rm tr}(\Sigma_P)}{n}}.\]
Therefore, $r_{\qd}(\delta_{\qd},\sF,P)=0$ when $n\geq {\rm tr}(\Sigma_P)/\delta^2_{\qd}$. Moreover,
\[r_{\mt}(\delta_{\mt},\sF,P)\leq \frac{\sigma_P}{\delta_{\mt}}\,\sqrt{\frac{{\rm tr}(\Sigma_P)}{n}}.\] If there is no condition on $n$, it is possible that $r_{\qd}(\delta_{\qd},\sF,P)=+\infty$; for instance, this will be the case if $P_{\bX}$ is Gaussian and $n\leq c\,{\rm tr}(\Sigma_P)/\delta^2_{\qd}$.\end{remark}

\subsubsection{Moment parameters} Our results also require the introduction of two moment-related quantities. The first of these is:
\begin{equation}
    \label{eq:sba}
   \theta_{0}(\sF,P):=\sup\left\{ \frac{\left\|f-f^\star_P\right\|_{L^2(P_\bX)}}{\left\|f-f^\star_P\right\|_{L^1(P_{\bX})}}\,:\, f\in\sF,\, \left\|f-f^\star_P\right\|_{L^1(P_{\bX})}>0\right\}
\end{equation}As shown in Proposition 2 of \cite{lecamLLM}, a bound on $\theta_0(\sF,P)<+\infty$ is essentially equivalent to a ``small ball condition'' on the functions $f-f^\star_P$:
\begin{equation}\label{eq:smallball}\Pr_{X\sim P_{\bX}}\{|f(X)-f^\star_P(X)|\geq c_0\|f-f^\star_P\|_{L^2(P_{\bX})}\}\geq \alpha_0\end{equation}
for $c_0,\alpha_0>0$. This will give a convenient way to control the quadratic part of the excess risk.

The second moment parameter applies to the multiplier part. Given $p \geq 1$, let
\[\kappa_p(\sF,P):=\sup\left\{ \frac{\left\|\mt_{f,P} - P\,\mt_{f,P}\right\|_{L^p(P)}}{\left\|f - f^\star_P\right\|_{L^2(P_{\bX})}}\,:\, f\in\sF,\, \left\|f - f^\star_P\right\|_{L^2(P_{\bX})}>0\right\}.\]

\begin{remark}[Moment conditions and linear regression]\label{remark:momentlinear} In the setting of Example \ref{example:linear}, 
\begin{eqnarray*}\theta_{0}(\sF,P)^{-1} &=&  \inf\limits_{\beta\in\R^d\,:\,\langle \beta,\Sigma_P\,\beta\rangle=1}\|\langle \cdot,\beta\rangle\|_{L^1(P_\bX)}\mbox{ and }\\
\kappa_p(\sF,P) &=&\left(\frac{\|\xi_P\|_{L^p(P)}}{\sigma_P}\right)\, \sup\limits_{\beta\in\R^d\,:\,\langle \beta,\Sigma_P\beta\rangle=1}\|\langle \cdot,\beta\rangle\|_{L^p(P_\bX)}.\end{eqnarray*}
In particular, for $p>2$ the parameter $\kappa_p(\sF,P)$ depends on hypercontractivity properties of the random variable $\xi_P$ and of the one-dimensional marginals of $X\sim P_{\bX}$.\end{remark}

\subsection{Results and comparisons} We present a trimmed-mean-based estimator for Problem \ref{problem:regression} that satisfies improved bounds. Like \cite{Lecue2020}, we use the observation that 
\[f^\star_{P} = \argmin_{f\in\sF}P\,\ell_f = \argmin_{f\in\sF}\left(\sup_{g\in\sF}P\,(\ell_f-\ell_g)\right),\]
and define $F_n$ via
\begin{equation}
    \label{eq:argmin}
    \forall z_{1:n} \in (\bX\times \R)^n\,:\, F_n(z_{1:n})\in\argmin_{f\in\sF}\left(\sup_{g\in\sF}\Tmhat_{n,\phi n}\,(\ell_f-\ell_g,z_{1:n})\right)
\end{equation}
with the convention $\widehat{f}_n^\eps := F_n\left(Z^\eps_{1:n}\right)$.

\begin{theorem}[Proof in \SMBoundsRegression]\label{thm:regressionbounds} In the setting of Problem \ref{problem:regression}, let $\sP$ denote the family of all probability distributions $P$ over $(\bX,\sX)$ that are $2$-compatible with $\sF$.  Given $\alpha \in (0,1)$ and $\eps > 0$ define
\begin{equation}
\label{eq:reghip0} \phi := \frac{\lfloor \eps n \rfloor + \left\lceil \ln\frac{3}{\alpha} \right\rceil \vee \lceil \frac{\eps n}{2} \rceil}{n} \text{ and assume }\phi + \frac{1}{2n}\ln\frac{3}{\alpha} \leq \frac{1}{96\theta_0^2}.
\end{equation}

Then, there is an event $E$ with probability at least $1-\alpha$ where
\begin{equation}
\label{eq:boundtheoremregression}
\left\|\widehat{f}_n^\eps - f^\star_P \right\|_{L^2(P_\bX)} \leq \Phi_P\end{equation}
 with $\Phi_P= \Phi_P(\sF,\alpha,n,\varepsilon)$ given by

\begin{align}\label{eq:defPhiPregression}\Phi_P &:= 49152\,\left[r_\qd\left( \frac{1}{32\theta_0(\sF,P)}, \sF, P \right) \vee 16r_\mt\left( \frac{1}{448\theta_0^2(\sF,P)}, \sF, P \right) \right]\\ \nonumber & \;\;\;\;\; + 49152\,\theta_0(\sF,P)^2\,\left[\inf_{1\leq q\leq 2}\kappa_q(\sF,P)\left(\frac{ \ln\frac{3}{\alpha}  }{n}\right)^{1-\frac{1}{q}}+\inf_{p\geq 1}  \kappa_p(\sF,P)\,\eps^{1-\frac{1}{p}}\right].\end{align}

Moreover, in the same event $E$ the following inequality holds:
\begin{equation}
    \label{eq:oracle}
    R_P\left(\widehat{f}_n^\eps\right) - R_P\left(f^\star_P\right) \leq \left( 1 + \frac{1}{16 \theta_0(\sF,P)^2} \right)\Phi_P^2.
\end{equation}
\end{theorem}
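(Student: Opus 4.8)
The plan is to exploit the variational characterization $f^\star_P=\argmin_{f\in\sF}\sup_{g\in\sG}P(\ell_f-\ell_g)$ together with the loss decomposition $\ell_f-\ell_{f^\star_P}=(f-f^\star_P)^2-2\mt_{f,P}$, and to control the empirical objective $\Psi(f):=\sup_{g\in\sG}\Tmhat_{n,k}(\ell_f-\ell_g,Z^\eps_{1:n})$ that $\widehat{f}^\eps_n$ minimizes. Following Mendelson's localization philosophy, I would seek a \emph{lower} bound on the trimmed mean of the quadratic part and an \emph{upper} bound on the trimmed mean of the centered multiplier part: the small-ball parameter $\theta_0$ governs the former, while the critical radius $r_\mt$ and the moment parameters $\kappa_q,\kappa_p$ govern the latter. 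The goal is then a one-line contradiction: if $\|\widehat{f}^\eps_n-f^\star_P\|_{L^2(P_\bX)}$ were larger than $\Phi_P$, testing the supremum at $g=f^\star_P$ would force $\Psi(\widehat{f}^\eps_n)>\Psi(f^\star_P)$, contradicting minimality.

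The first technical step is to build the event $E$ of probability at least $1-\alpha$ carrying two uniform estimates. For the quadratic part I would note that $(f-f^\star_P)^2$ and $|f-f^\star_P|$ induce the same order statistics, so by Jensen (power-mean) $\Tmhat_{n,k}((f-f^\star_P)^2)\ge(\Tmhat_{n,k}(|f-f^\star_P|))^2$; the abstract trimmed-mean deviation bound underlying Theorem~\ref{thm:uniformTM} (in its general form, Theorem~\ref{thm:tmlinear}) applied to the class $\{|f-f^\star_P|\}$ gives $\Tmhat_{n,k}(|f-f^\star_P|)\ge\|f-f^\star_P\|_{L^1(P_\bX)}-\delta_\qd r-\cdots$, and since $\|f-f^\star_P\|_{L^1(P_\bX)}\ge r/\theta_0$ by definition of $\theta_0$, choosing $\delta_\qd=\tfrac1{32\theta_0}$ and $r\ge r_\qd(\delta_\qd)$ yields $\Tmhat_{n,k}((f-f^\star_P)^2)\gtrsim r^2/\theta_0^2$; the trimming fraction consumes at most $\approx\phi$ of the mass, which is exactly why the hypothesis $\phi+\tfrac1{2n}\ln\tfrac3\alpha\le\tfrac1{96\theta_0^2}$ is imposed. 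For the multiplier part I would invoke the same abstract bound on $\sF_\mt(r,P)$, whose complexity is measured by $r_\mt$ at level $\delta_\mt=\tfrac1{448\theta_0^2}$ and whose $L^q,L^p$ fluctuations are measured by $\kappa_q,\kappa_p$, obtaining $|\Tmhat_{n,k}(\mt_{f,P})-P\mt_{f,P}|\lesssim\delta_\mt r^2+\kappa_q r(\ln(3/\alpha)/n)^{1-1/q}+\kappa_p r\,\eps^{1-1/p}$; contamination is absorbed because the trimming level satisfies $k\ge\lfloor\eps n\rfloor$.

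With $E$ in hand I would run the comparison. Since $\widehat{f}^\eps_n$ minimizes $\Psi$, $\Psi(\widehat{f}^\eps_n)\le\Psi(f^\star_P)$; testing $g=f^\star_P$ gives $\Psi(f)\ge\Tmhat_{n,k}(\ell_f-\ell_{f^\star_P})\ge c\,r^2/\theta_0^2-C\,rM$ with $r=\|f-f^\star_P\|_{L^2(P_\bX)}$ and $M=\kappa_q(\ln(3/\alpha)/n)^{1-1/q}+\kappa_p\eps^{1-1/p}$, the $\delta_\mt r^2$ term being absorbed into the quadratic coefficient. On the other side, using the first-order optimality $P\mt_{g,P}\le0$ (so $P(\ell_{f^\star_P}-\ell_g)\le0$) together with the quadratic domination, each summand in $\Psi(f^\star_P)$ is at most $-c'\rho^2/\theta_0^2+C\rho M$ at scale $\rho=\|g-f^\star_P\|_{L^2(P_\bX)}$, whose supremum is $\lesssim\theta_0^2M^2$. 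Feeding both into $c\,r^2/\theta_0^2-CrM\le\theta_0^2M^2$ and solving the quadratic forces $r\lesssim\theta_0^2M$, i.e.\ the moment part of $\Phi_P$, while $r_\qd\vee r_\mt$ enters as an additive floor below which the localized bounds are vacuous; convexity of $\sF$ is used here through the star-shapedness of $\sF-f^\star_P$, which makes $\Rad_n(\sF_\qd(r,P))/r$ and $\Rad_n(\sF_\mt(r,P))/r^2$ sub-root, so the single-scale estimates persist for every $r>\Phi_P$ and no peeling is needed. The oracle inequality \eqref{eq:oracle} then falls out of the same chain: writing the excess risk as $r^2-2P\mt_{\widehat{f}^\eps_n,P}$ and bounding $-2P\mt_{\widehat{f}^\eps_n,P}$ via $\Psi(\widehat{f}^\eps_n)\le\Psi(f^\star_P)$ together with the quadratic and multiplier controls produces precisely the factor $1+\tfrac1{16\theta_0^2}$.

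The step I expect to be the main obstacle is the non-additivity of the trimmed mean: because $\Tmhat_{n,k}$ is a sum of order statistics rather than a linear functional, the inequality $\Tmhat_{n,k}(\ell_f-\ell_{f^\star_P})\ge\Tmhat_{n,k}((f-f^\star_P)^2)-2\,(\text{multiplier term})$ is not automatic and must be obtained through a sandwiching lemma relating the trimmed mean of a sum to (Winsorized or truncated) trimmed means of its summands at inflated trimming levels, roughly $k\rightsquigarrow 2k$. Tracking the constants through this inflation — so that the enlarged trimming still fits under the small-ball budget $\tfrac1{\theta_0^2}$ and the explicit constants in $\Phi_P$ and in \eqref{eq:oracle} ultimately close up — is where the bulk of the careful work lies; everything else reduces to the abstract trimmed-mean deviation bound of Theorem~\ref{thm:tmlinear} plus the by-now-standard localization scheme.
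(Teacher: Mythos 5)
Your architecture matches the paper's almost exactly: the paper proves a fixed-point lemma (Lemma \ref{lemma:fixedpointargument}) stating that if $\inf_{r_f=r}\Tmhat_{n,\phi n}^\eps(\ell_f-\ell_{f^\star_P})\geq\gamma\geq 2\sup_{r_f\leq r}\Tmhat_{n,\phi n}^\eps(\mt_f-P\mt_f)$ then $r_{\widehat f_n^\eps}\leq r$ and the excess risk is at most $r^2+2\gamma$ (which with $\gamma=r^2/(32\theta_0^2)$ gives exactly the factor $1+\tfrac{1}{16\theta_0^2}$), and then verifies the two conditions on an event $E$ built from the master theorem (Theorem \ref{thm:tmlinear}) applied to the pair of classes $\sF_\qd(r)$ and $\sF_\mt(r)$. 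Your identification of the non-additivity of the trimmed mean as the central obstacle, resolved by a sandwiching lemma at inflated trimming levels, is precisely the content of the paper's Counting and Bounding Lemmata, and your handling of the multiplier part, the contamination term, and the final quadratic solve all track the paper's third and final steps. One structural difference: where you invoke sub-rootness to make single-scale estimates persist for all $r'>\Phi_P$, the paper instead uses convexity of $\sF$ to rescale any $f$ with $r_f>r$ onto the sphere $r_f=r$ via $\bar f=f^\star_P+(f-f^\star_P)/q_f$, so only one scale ever needs to be controlled; both are standard, but the paper's route avoids having to verify sub-rootness.

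The one place your plan genuinely diverges, and where it would need repair, is the lower bound on the quadratic part. You propose the empirical power-mean route $\Tmhat_{n,k}((f-f^\star_P)^2)\geq(\Tmhat_{n,k}(|f-f^\star_P|))^2$, justified by the fact that $(f-f^\star_P)^2$ and $|f-f^\star_P|$ share order statistics. The difficulty is that this inequality concerns the trimmed mean of $(f-f^\star_P)^2$ \emph{with its own active set}, whereas the quantity you actually need to lower-bound is $\Tmhat_{n,\phi n}^\eps(\ell_f-\ell_{f^\star_P})$, whose trimming is governed by the order statistics of the combined function $(f-f^\star_P)^2-2\mt_f$; and the sandwiching lemma you correctly anticipate outputs the summands as \emph{truncated, centered, full empirical means on the clean sample}, not as trimmed means of the individual summands, so the same-order-statistics premise no longer applies after the decomposition. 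The paper sidesteps this entirely by replacing $y^2$ with an explicit bounded $2ar$-Lipschitz minorant $\psi(y)=r^2(2a(\tfrac{|y|}{r}\wedge c)-a^2)_+\leq y^2$: the class $\psi\circ\sF_\qd(r)$ is uniformly bounded (so it enters the master theorem with $b=0$, costing no extra trimming budget and requiring no moments beyond $L^2$), Ledoux--Talagrand contraction converts its Rademacher complexity into $4a\delta_\qd r^2$, and the small-ball parameter $\theta_0$ is used at the \emph{population} level via $P\psi(f-f^\star_P)\geq r^2\{2a(\tfrac{1}{\theta_0}-\tfrac1c)-a^2\}$, rather than at the empirical level as in your plan. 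Your route is the classical Mendelson small-ball argument and can be made to work, but it requires either a one-sided bounding lemma tailored to nonnegative summands or a restructuring of the decomposition; as written, the power-mean step does not compose with the sandwiching lemma you rely on.
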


Let us explain how this result compares with previous work on robust regression. 

A seminal early contribution by Audibert and Catoni \cite{audibert2011robust} gives an estimator for robust least squares problems, where $\sF$ is (a subset of) the linear hull of finite family of functions. Unlike our paper, \cite{audibert2011robust} does not consider contamination, requires restricting their regression method to a bounded set, and achieves dimension-dependent bounds. 

Lugosi and Mendelson \cite{Lugosi2019b} gives regressors with ``sub-Gaussian guarantees'' in Problem \ref{problem:regression} under weak moment assumptions, and for more general $\sF$ than \cite{audibert2011robust}. Their method is based on a median-of-means construction. Approaches based on solving min-max problems such as \eqref{eq:argmin} are also present in the literature \cite{lecamLLM, Depersin2021, Lecue2020}. Improving upon \cite{lecamLLM}, Lecu\'{e} and Lerasle \cite{Lecue2020} obtained the best known results for this problem: their bound for $\Phi_P(\sF,\alpha,n,\eps)$ in Problem \ref{problem:regression} takes the following form (up to constant factors):
\begin{equation}\label{eq:lecue2020}\theta_0(\sF,P)^2\left(r_{\qd}(\delta_{\qd},\sF,P)\vee r_{\mt}(\delta_{\mt},\sF,P) + \kappa_2(\sF,P)\sqrt{\varepsilon + \frac{1}{n}\ln\left(\frac{1}{\alpha}\right)}\right),\end{equation}
with smaller values of $\delta_{\mt}$ and $\delta_{\qd}$ than we require. Our bound compares favorably with (\ref{eq:lecue2020}) because it has better dependence on the moment parameters $\kappa_p(\sF,P)$. In particular, Remark \ref{rem:9} shows that the contamination term involving $\eps$ in our theorem is optimal. 

\begin{remark}Lecu\'{e} and Lerasle \cite{Lecue2020} also consider regularized versions of Problem \ref{problem:regression} and do not assume equally distributed random variables. In this paper, we do not consider either generalization. \end{remark}

\begin{remark}\label{rem:restricteps}As in \cite{Lecue2020}, Theorem \ref{thm:regressionbounds} requires a restriction that $\eps\leq c\,\theta_0(\sF,P)^{-2}$ for some positive $c>0$. \SMSmallBall\ explains why some such restriction is necessary for any method, and shows this relates to \S \ref{sub:setupB}.\end{remark}

\begin{remark}\label{rem:9}We also notice that the contamination and the fluctuation terms in Theorem \ref{thm:regressionbounds} are optimal up to constants. To see this recall that if $\mu$ is the mean of a square-integrable random variable $Y$ taking values on $\R$, then
\[ \mu = \argmin_{\mu' \in \R} \Ex(Y-\mu')^2. \]
Thus, in this setup, quadratic risk minimization over the class $\sF$ of constant functions can be used to estimate the mean. Since Theorem \ref{thm:regressionbounds} matches the optimal fluctuation and contamination terms for mean estimation, it must be optimal.
\end{remark}

\section{Algorithms for robust linear regression} \label{sec:algorithms} We now present some heuristics for linear regression that are related to the theoretical results in Theorem \ref{thm:regressionbounds}. Our main finding, presented in \S \ref{sec:experiments}, is that trimmed-mean-based methods tend to outperform ordinary least squares and robust alternatives based on median-of-means. 

Before describing our heuristics, we note that Theorem \ref{thm:regressionbounds} does not translate directly into a practical method. First of all, the definition of $\hat{f}_n^\eps$ requires a choice of confidence level $1-\alpha$, which is (theoretically) unavoidable for any estimator \cite[Theorem 3.2]{Devroye2016}. Theorem \ref{thm:regressionbounds} also requires some knowledge of the contamination level $\eps$, which is also unavoidable for trimmed-mean-based methods. Finally, the min-max problem \eqref{eq:argmin} defining $\hat{f}_n^\eps$ poses computational challenges. 

Nevertheless, we will argue that there are practical ways to choose the trimming parameter $k$ and to compute regressors that circumvent these challenges. This requires several algorithmic choices that we  describe below. 

In our experiments, we compare our method to the following procedures: a variant of the Median of-Means regression procedure of \cite{Lecue2020}; regression with Huber loss; quantile regression for the median quantile; and ordinary least squares. The Supplementary Material discusses these methods in great detail.


\subsection{Preliminaries} We work in the setting of linear regression, corresponding to Example \ref{example:linear} above. For $\beta\in \R^d$, we write $\ell_\beta(x,y):=(\langle \beta,x\rangle -y)^2$ to denote the loss associated with the linear regression function $\langle \beta,\cdot\rangle$ on a point $(x,y)\in \R^d\times\R$. Given a trimming parameter $k\in [n]$; points $z_i=(x_i,y_i)\in\R^d\times \R$, $i\in [n]$; and vectors $\beta^m,\beta^M\in\R^d$, we may write:
\[
\Tmhat_{n,k}\left(\ell_{\beta^m} - \ell_{\beta^M};z_{1:n}\right) =
\frac{1}{n - 2k}
\sum_{i \in I_k(\beta^m, \beta^M, z_{1:n})}
\left( (\langle x_i,\beta^m\rangle - y_i)^2 - (\langle x_i,\beta^M\rangle - y_i)^2 \right),
\]
where $I_k(\beta^m, \beta^M, z_{1:n})$ -- called the {\em active set} for $(\beta^m, \beta^M, z_{1:n})$ -- is the set of indices $i \in [n]$ that appear in the trimmed mean, i.e., the set obtained once the $k$ largest and $k$ smallest values of $(\langle x_i, \beta^m\rangle - y_i)^2 - (\langle x_i, \beta^M\rangle - y_i)^2$ are removed (with ties broken arbitrarily). Our heuristic will be described in two steps. First, we describe how to optimize the choice of $\beta^m$ and $\beta^M$ for a specific $k$. Second, we present a cross-validation procedure to choose the trimming parameter.

\subsection{Optimization for a fixed trimming level} Assume that the trimming parameter is fixed at some value $k$. To find a good regressor for this trimming level, we apply the \textit{Plug-in Method} described in Algorithm \ref{alg:plugin}. It relies on the existence of a black-box function $\texttt{Fit}$ that on input $((x_i,y_i))_{i\in I}$ (with $I\subset [n]$ nonempty) will return a vector 
\[\beta((x_i,y_i)_{i\in I})\in \argmin_{\beta\in\R^d}\,\frac{1}{|I|}\sum_{i\in I}\left(\langle \beta,x_i\rangle - y_i\right)^2.\]
The idea is that the set $I$ will correspond to the set of active indices, of size $(1-2\phi)n = n-2k$. Te algorithm alternates between optimizing $\beta^m$ and $\beta^M$ for a fixed set $I$, and updating $I$ to be the current active set. The output of the algorithm is vector produced in the iterations that minimizes the trimmed mean estimate of the loss. 

In our experiments, we set the number of iterations to $T_{\max}=20$ and implement \texttt{Fit} via the \texttt{lstsq} procedure from the \texttt{numpy.linalg} library in Python. The initial choices for $\beta_0^m, \beta_0^M$ are random perturbations of OLS solutions on the full data; see \SMAlg for details. Before continuing, we note that other optimization heuristics are possible, including variants of the ADMM procedure in \cite{Lecue2020}. As discussed in \SMExp, we choose the Plug-in method because it outperforms the alternatives in experiments.

\begin{algorithm}[ht!]
\SetKwInOut{Input}{input}
\SetKwInOut{Output}{output}
\Input{$(x_1,y_1), \cdots, (x_n,y_n) \in \R^d\times \R$: the data\\$\beta_0^m, \beta_0^M \in \R^d$:  initial guesses \\
$\phi$: trimming ratio\\
$T_{\max}$: number of iterations}

\Output{$\beta^\star_{\phi}$: an approximate solution for the min-max problem}

\BlankLine

$t \leftarrow 0$

$k\leftarrow \lfloor \phi n\rfloor$  (obtain trimming level)

\While{$t<T_{\max}$}{

 $I_t^m \leftarrow I_k\left(\beta^m_t,\beta^M_t ,z_{1:n}\right)$ (get active indices)
    
    $\beta_{t+1}^m \leftarrow \texttt{Fit}\left( \{(x_i,y_i)\}_{i\in I_t^m}\right)$

    $I_t^M \leftarrow I_k\left(\beta^m_{t+1},\beta^M_t ,z_{1:n}\right)$ (update active indices)
    
    $\beta_{t+1}^M \leftarrow \texttt{Fit}\left( \{(x_i,y_i)\}_{i\in I_t^M}\right)$

    $t \leftarrow t+1$

}

$\beta^\star_{\phi}\leftarrow \argmin\left\{\Tmhat_{n,k}(\ell_\beta, z_{1:n})\,:\,\beta \in \bigcup_{t=1}^{T_{\max}}\{\beta^m_t, \beta^M_t\}\right\}.$
\caption{Plug-in algorithm.}\label{alg:plugin}
\end{algorithm}

\subsection{Trimming level selection via cross-validation}\label{subsec:cv} We now consider the problem of choosing the trimming level $k$ for best-possible performance. We here call $\phi \in [0,1]$ a trimming ratio and $k$ a trimming level, both quantities are related by $k = \lfloor \phi n \rfloor$. Following \cite{Lecue2020} we choose $k$ via a cross-validation procedure. Let $(z_i)_{i=1}^n = (x_i,y_i)_{i=1}^n$ be the data points. Assume that $\phi_1\leq \phi_2\leq \dots \leq \phi_m$ is an ordered grid of possible choices for $\phi$. Let $v \leq n$ be the number of folds: that is, $[n]$ is partitioned into sets $\{B_l\}_{l=1}^v$ with respective sizes $n_l := |B_l| \in \{\lfloor n/v\rfloor,\lfloor n/v\rfloor+1\}$. The procedure works as follows. 

\begin{enumerate}
    \item For each choice of $(j,l)\in [m]\times [v]$, let $\beta^\star_{\phi_j}([n] - B_l)$ be the output of Algorithm \ref{alg:plugin} on the $n-n_l$ data points $(x_i,y_i)_{i\not\in B_l}$.
    \item For each choice of $(j,l)\in [m]\times [v]$, estimate the loss of $\beta^\star_{\phi_j}([n] - B_l)$ via a trimmed mean with ratio $\phi_j$ on the fold $B_l$:
    \[L(j,l):=\Tmhat_{n_l, \phi_j n_l} \left(\ell_{f_{j,l}}, (z_{i})_{i\in B_l}\right).\]
    \item For each $j\in [m]$, associate a loss with trimming ratio $\phi_j$ via \[L(j):={\rm median}(L(j,l)\,:\,l\in [v]).\]
    \item Choose $\phi^\star = \phi_{j^\star}$ where $j^\star = \argmax_{j = 2, \dots, m} \frac{L(j-1)}{L(j)}$.
    \item Compute the final estimator $\beta^\star_{\phi^\star}$ by running Algorithm \ref{alg:plugin} with trimming ratio $\phi^\star$ on the full dataset $(z_i)_{i\in[n]}$. 
\end{enumerate}



Intuitively, the choice of trimming ratio $\phi^\star = \phi_{j^\star}$ corresponds to the point at which increasing $j$ induces the largest drop in the loss. This is inspired by the Slope Heuristic \cite{slope}. Another natural choice is to choose $j^\star$ as minimizer of $L(j)$; this method is inspired by \cite{Lecue2020} and considered in the SM. We note in passing that we have fixed $v=5$ as the number of folds in all our experiments. 

\subsection{Other benchmarks} We compare the performance of the trimmed mean estimator against four benchmarks: ordinary least squares, quantile regression, a variant of the Median of Means (MoM) procedure from \cite{Lecue2020}, and Huber regression.

OLS and quantile regression are classical methods. OLS has no parameters, and quantie regression depends only on the quantile level, which we fix at $q=1/2$ (median regression). We rely on the implementation of these methods in \texttt{sklearn}.

MoM requires splitting the $n$ data points into $K$ buckets of approximately equal size. This parameter $K$ is a close analogue of the trimming parameter $k$ in our procedure, and we adapted to MoM the optimization and cross validation procedures described for TM. In brief:

\begin{itemize}
    \item To adapt Algorithm \ref{alg:plugin}, we use a different concept of active set. Consider the blocks $A_1,\dots,A_K\subset [n]$ used by the median-of-means construction. Then the active set $I_k(\beta^m,\beta^M)$ is the block of indices $A_r$ for which
    \[M_r := \frac{1}{\# A_r}\sum_{i\in A_r}(\ell_{\beta^m}(x_i,y_i) - \ell_{\beta^M}(x_i,y_i))\]
    realized the median over all $\{M_s : s \in [K]\}$ with ties between blocks broken arbitrarily.
    \item When performing the optimization iterations, the blocks of median-of-means are resampled uniformly at random at each step.
    \item The cross validation procedure is now over choices of $K_j$. However, the loss estimate $L(j,l)$ are performed via a MoM estimator using the data in fold $B_l$ with $K_j/v$ blocks.
\end{itemize}

These procedures are detailed and contrasted with potential alternatives in \SMExp. Importantly, the original algorithm in \cite{Lecue2020} was designed for sparse linear regression in a $d\gg n$ setting. By contrast, we consider $d\ll n$, and our algorithmic choices for MoM were optimized for this case. This explains why our version of the MoM method is somewhat different from that in \cite{Lecue2020}.

Finally, Huber regression \cite{huber2011robust} is a robust regression method where the loss function interpolates between quadratic error and absolute error. A shape parameter $M \geq 1$ controls this interpolation; we select it  via cross validation from a grid described in the next section. However, instead of using the slope heuristic we take $j^\star = \argmin_{j = 1, \dots, m} L(j)$ (\SMAlg~explains this choice). We use the \texttt{sklearn} implementation of this method.

\section{Experiments with linear regression} \label{sec:experiments} We perform experiments in two data generation setups. Setup A (\S\ref{sub:setupA}) favors robust methods and is analogous to \cite{Lecue2020}. Setup B (\S\ref{sub:setupB}) is closely related to Remark \ref{rem:restricteps} and favors the OLS. In our experiments, the cross-validation procedure (\S\ref{subsec:cv}) uses $v=5$ folds and selects the TM ratio $\phi$ among the values in
\[\Lambda := \left\{ \eps' + \frac{1}{30} : \eps' \in \left\{ 
0, \frac{2}{100}, \frac{4}{100}, \frac{6}{100}, \frac{8}{100}, \frac{10}{100}, \frac{15}{100}, \frac{20}{100}, \frac{30}{100},\frac{40}{100}  \right\} \right\}.\]
The number of buckets $K$ for MoM from the values in $\left\{ \lfloor 2\phi n + 1 \rfloor : \phi \in \Lambda \right\}$. The shape parameter $M$ for Huber regression is selected over $10$ evenly spaced values between $1$ and $1.35$ (Huber \cite{huber2011robust} recommends $M=1.35$ to obtain as much robustness as possible while retaining high statistical efficiency on Gaussian data). Each parameter combination underwent $96$ runs of the experiment.

\subsection{Setup A}\label{sub:setupA} Let $d\geq 1$ be an integer and $X_1, X_2, \cdots, X_n$ be i.i.d. standard Gaussian's. Define $Y_i = \langle X_i , \beta^\star \rangle + \xi_i ~\forall i\in[n]$, where $\beta^\star = \frac{1}{\sqrt{d}}\left[1,1,\cdots,1\right] \in \R^d$ and $\xi_i$ are i.i.d. errors. The distribution of $\xi_i$ follows the skewed generalized t distribution proposed in \cite{lian2025novel}, which has parameters $\mu$, $\sigma$, $\beta$, $\alpha$, and $\lambda$. We vary the parameters $\alpha > 0$ and $\lambda \in (-1,1)$, which control moments and skew, respectively. We set $\mu=0$, $\sigma=1$, and $\beta=2$ so that when $\lambda=0$ the distribution yields the classical Student's t with $2\alpha$ degrees of freedom and the Gaussian taking $\alpha = \infty$. Taking $\lambda \neq 0$ adds skew to these distributions, with direction given by the sign of $\lambda$. The contamination model for Setup A is defined as follows: a set of indices $\sO \subset [n]$ of size $\lfloor\eps n\rfloor$ is chosen uniformly at random, and then one sets
\[ (X_i^\eps, Y_i^\eps) = (X_i, Y_i) ~ \forall i\not\in\sO \text{ and } (X_i^\eps, Y_i^\eps) = (\beta^\star, 10000) ~ \forall i\in\sO. \]

Table \ref{table:setupA} presents our results on four contamination levels. When $\eps > 0$, we not display the results for OLS because they are much larger than other benchmarks. We make the following observations:
\begin{itemize}
    \item In extreme cases where $\xi$ doesn't even have first moment (i.e., $\alpha = 0.5$), quantile regression and MoM perform better; MoM is the best of the two at the largest contamination level we consider.
    \item When $\alpha \geq 1$,  Huber regression and the TM have comparable performance, and TM does better at more skewed distributions.
\end{itemize}

More extensive experiments on variants of Setup A are presented in \SMExp.

\begin{table}
\centering
\caption{Average $\| \widehat{\beta}_n - \beta^\star \|_2$ for several moment and skew configurations.}\label{table:setupA}
\addtolength{\tabcolsep}{-0.11em}
\begin{tabular}{l|l|llll|llll|llll}
\toprule
& & \multicolumn{4}{c}{$\alpha = 0.5$} & \multicolumn{4}{c}{$\alpha = 1$} & \multicolumn{4}{c}{$\alpha = \infty$} \\
& $\lambda$ & 0.0 & 0.3 & 0.6 & 0.9 & 0.0 & 0.3 & 0.6 & 0.9 & 0.0 & 0.3 & 0.6 & 0.9 \\
\midrule

\multirow{5}{*}{ \rot{$\eps = 0$ (no cv)} } & HUBER & 0.47 & 0.49 & 0.60 & 0.86 & \textbf{0.36} & \textbf{0.40} & \textbf{0.50} & \textbf{0.60} & 0.28 & 0.28 & 0.31 & 0.33 \\
& MOM & 3.13 & 3.64 & 4.35 & 5.65 & 0.95 & 1.05 & 1.12 & 1.31 & 0.53 & 0.54 & 0.58 & 0.59 \\
& OLS & 16.03 & 19.23 & 24.11 & 29.44 & 0.79 & 0.86 & 1.05 & 1.25 & \textbf{0.27} & \textbf{0.27} & \textbf{0.29} & \textbf{0.31} \\
& QUANTILE & \textbf{0.45} & \textbf{0.46} & \textbf{0.53} & \textbf{0.77} & 0.39 & 0.48 & 0.66 & 0.85 & 0.33 & 0.34 & 0.38 & 0.42 \\
& TM & 1.33 & 1.46 & 1.83 & 2.44 & 0.47 & 0.48 & 0.54 & 0.64 & 0.29 & 0.28 & 0.29 & 0.31 \\
\midrule

\multirow{4}{*}{ \rot{$\eps = 0.1$} } & HUBER & \textbf{0.52} & 0.54 & 0.63 & \textbf{0.90} & \textbf{0.42} & \textbf{0.47} & 0.64 & 0.82 & 0.33 & 0.33 & 0.36 & 0.39 \\
& MOM & 2.72 & 3.07 & 2.05 & 2.64 & 1.52 & 1.48 & 1.60 & 1.71 & 0.89 & 0.90 & 0.94 & 0.93 \\
& QUANTILE & 0.52 & \textbf{0.53} & \textbf{0.62} & 0.91 & 0.44 & 0.54 & 0.73 & 0.93 & 0.37 & 0.39 & 0.43 & 0.47 \\
& TM & 1.18 & 1.80 & 1.95 & 2.71 & 0.50 & 0.51 & \textbf{0.62} & \textbf{0.72} & \textbf{0.32} & \textbf{0.32} & \textbf{0.33} & \textbf{0.36} \\
\midrule

\multirow{4}{*}{ \rot{$\eps = 0.2$} } & HUBER & 0.61 & 0.64 & 0.76 & \textbf{1.08} & \textbf{0.49} & \textbf{0.54} & 0.72 & 0.93 & 0.40 & 0.38 & 0.42 & 0.46 \\
& MOM & 1.09 & 1.17 & 1.19 & 1.34 & 0.99 & 0.99 & 1.05 & 1.11 & 2.18 & 1.86 & 1.88 & 2.93 \\
& QUANTILE & \textbf{0.61} & \textbf{0.63} & \textbf{0.75} & 1.10 & 0.51 & 0.59 & 0.80 & 1.01 & 0.43 & 0.43 & 0.48 & 0.53 \\
& TM & 0.85 & 0.90 & 1.14 & 1.64 & 0.50 & 0.54 & \textbf{0.65} & \textbf{0.80} & \textbf{0.37} & \textbf{0.37} & \textbf{0.40} & \textbf{0.43} \\
\midrule

\multirow{4}{*}{ \rot{$\eps = 0.4$} } & HUBER & 0.98 & 1.07 & 1.31 & 1.84 & 0.77 & \textbf{0.86} & 1.08 & 1.35 & \textbf{0.62} & \textbf{0.60} & \textbf{0.64} & \textbf{0.70} \\
& MOM & 1.01 & \textbf{1.03} & \textbf{1.01} & \textbf{1.06} & 1.01 & 0.99 & \textbf{0.99} & \textbf{1.02} & 0.97 & 0.98 & 0.97 & 0.98 \\
& QUANTILE & \textbf{0.97} & 1.03 & 1.30 & 1.84 & \textbf{0.76} & 0.86 & 1.14 & 1.41 & 0.63 & 0.61 & 0.68 & 0.74 \\
& TM & 2.04 & 2.29 & 2.58 & 3.50 & 1.01 & 1.13 & 1.34 & 1.78 & 0.74 & 0.68 & 0.72 & 0.80 \\
\bottomrule
\end{tabular}
\end{table}

\begin{figure}[t]
\includegraphics[width=\linewidth]{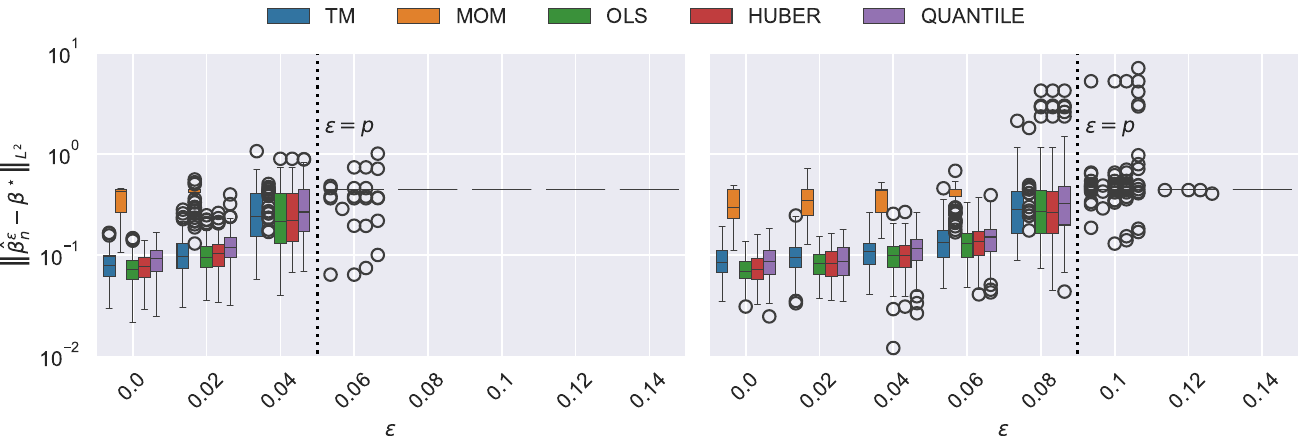}
\caption{Experiments varying the contamination proportion using Setup B, left panel with $p=0.05$ and right panel with $p=0.09$.}
\label{fig:setupB}
\end{figure}

\subsection{Setup B}\label{sub:setupB} This distribution and contamination model are a caricature of missing data and favors OLS. Let $p \in (0,1]$. Take independent \[X_{1:n}'\stackrel{i.i.d.}{\sim} \texttt{Normal}(0_d,I_{d\times d}),\;\xi_{1:n}\stackrel{i.i.d.}{\sim} \texttt{Normal}(0,1)\mbox{ and }B_{1:n}'\stackrel{i.i.d.}{\sim} \texttt{Ber}(p).\]
As before, let $\beta^\star=\frac{1}{d}[1,1,\dots,1]$. The uncontaminated data is given by
\[X_i:= B_i\,\frac{X_i'}{\sqrt{p}}\mbox{ and }Y_i =\langle X_i,\beta^\star\rangle +\xi_i ~ \forall i \in [n].\]
The contamination model is defined as follows. Take a subset $\sO\subset \{i\in [n]\,:\,B_i=1\}$ that satisfies $|\sO|\leq \eps n$ and is as large as possible. Set $B^\eps_i=0$ when $B_i=0$ or $i\in \sO$, and $B^\eps_i=1$ otherwise. The contaminated sample is
\[ X_i^\eps = B^\eps_i\frac{X_i'}{\sqrt{p}}\text{ and }Y_i^\eps = \langle X_i^\eps, \beta^\star \rangle + \xi_i ~ \forall i \in [n]. \]

Figure \ref{fig:setupB} displays the performance of all methods in consideration in Setup B with $d=5$ and $n=1000$. It contrasts with Table \ref{table:setupA} since the OLS is no longer losing to the robust estimators in most cases. Intuitively, OLS ignores points with $B_i^\eps=0$. The performance of TM, OLS, Huber and Quantile are similar, whereas MoM can often be significantly worse. 

\section{Proofs}
\label{sec:proofs}

This section presents the main mathematical arguments in the paper. Some not-too-enlightening details are left to the supplementary material.

\subsection{Trimming and truncation: a master theorem} 

The main results in this paper follow from the ``master theorem''~presented in this section. It may be viewed as an abstract and extended version of the arguments in \cite{Lugosi2021}, which were specific for mean estimation in Hilbert spaces. In this subsection, $P$ is a fixed probability measure over a measurable space $(\bX,\sX)$, and $X_{1:n}^\eps$ is an $\eps$-contaminated sample from $P$. For a given $M>0$, let
\begin{equation}
    \label{eq:deftau}
    \tau_M(x) := \begin{cases}M\text{, if }x>M\\x\text{, if }-M\leq x \leq M\\-M\text{, if }x<-M\end{cases}.
\end{equation}
be called the truncation function. If $\sG$ is a family of functions, we let
\[ \sG^o := \left\{ g - Pg : g \in \sG \right\}, ~ \sG^o_M := \left\{ \tau_M \circ(g - Pg) : g \in \sG \right\} \]
and also ${\rm rem}_M(\sG, P) := \sup_{g \in \sG} |P \tau_{M}\circ g|.$

\begin{theorem}[Master theorem for trimmed mean]\label{thm:tmlinear} Let $m \in \N$ and $\sF_1, \sF_2, \cdots, \sF_m$ be families of functions that are 1-compatible with $P$. Also let $X_{1:n}^\eps$ be an $\eps$-contaminated i.i.d. sample from $P$. Assume that for every $j\in[m]$ there exist $M_j>0$, $b_j\in \{0,1\}$ and $t_j\in \N \cup \{0\}$ satisfying one of the following conditions:
\begin{itemize}
\item either $b_j=0$, $t_j = 0$ and $\sF^o_j$ is a.s. uniformly bounded by $M_j$, i.e. $|f_j - Pf_j| \leq M_j$ almost surely for all $f_j\in\sF_j$;
\item or $b_j=1$ and we have the bound
\begin{equation}\label{eq:Mjhip} \sup_{f_j\in\sF_j} P\left\{\left|f_j(X)-Pf_j\right|>\frac{M_j}{2}\right\} + \frac{8\,\Rad_n(\tau_{M_j} \circ \sF_j^o,P)}{M_j} \leq \frac{t_j}{8n}.\end{equation}
\end{itemize}
Also let $\phi$ be such that
\[\frac{\lfloor \varepsilon n\rfloor + \sum_{j=1}^m t_j}{n}\leq \phi <\frac{1}{2}.\]

Let $x_j \geq 0$ for each $j\in[m]$. Then, with probability at least $1- \sum_{j=1}^m \left(b_j\,e^{-t_j} + e^{-x_j}\right)$, for every linear combination
$f = \sum_{j=1}^m a_j f_j$, $f_j \in \sF_j$,
\begin{align*}
    \left|\Tmhat_{n,\phi n} (f;X_{1:n}^\eps) - Pf\right| & \leq \sum_{j=1}^n |a_j|\left\{2\Emp_n\left( \tau_{M_j}\circ\sF_j^o,P\right) + \eta_j \right\},
\end{align*}
where
\begin{align*}
   \eta_j & = \left(6\phi + \frac{3 x_j}{n}\right)M_j + {\rm rem}_{M_j}\left( \sF_j^o,P\right) +  \nu_2\left( \tau_{M_j}\circ\sF_j^o,P\right)\sqrt{\frac{2x_j}{n}}.
\end{align*}
\end{theorem}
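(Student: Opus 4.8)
The plan is to prove a one-sided bound and then symmetrize. Negating $f$ (equivalently, flipping the signs of the $a_j$) reverses the order statistics, so $\Tmhat_{n,\phi n}(-f;\cdot)=-\Tmhat_{n,\phi n}(f;\cdot)$ exactly; since the right-hand side depends on the coefficients only through $|a_j|$ and the hypotheses on $(\sF_j,M_j,t_j)$ are unchanged under $f_j\mapsto-f_j$, it suffices to bound $\Tmhat_{n,\phi n}(f;X_{1:n}^\eps) - Pf$ from above by the claimed quantity; applying this to $-f$ gives the matching lower bound, and the two combine into the absolute-value statement. The upper bound rests on three ingredients, developed in order: (i) a purely combinatorial reduction trading contamination for a shrinkage of the trimming budget; (ii) a replacement of each coordinate $f_j$ by its truncation $\tau_{M_j}\circ(f_j-Pf_j)$, absorbing the few heavy points into the remaining budget; and (iii) uniform concentration of the resulting bounded, centred families. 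The essential difficulty throughout is that the trimmed mean is \emph{not} linear in $f$, so none of these steps is free to carry out coordinate by coordinate.

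First I would set up the high-probability ``few large values'' events. For each $j$ with $b_j=1$, consider $\sup_{f_j\in\sF_j}\frac1n\#\{i:|f_j(X_i)-Pf_j|>M_j/2\}$ on the clean sample. Its expectation is at most $\sup_{f_j}P\{|f_j-Pf_j|>M_j/2\}$ plus a fluctuation; bounding the indicator $\Ind[|t|>M_j/2]$ by a Lipschitz function of $\tau_{M_j}(t)$ and applying symmetrization (Theorem \ref{thm:symcont}) with the contraction principle expresses that fluctuation through $\Rad_n(\tau_{M_j}\circ\sF_j^o,P)/M_j$. Hypothesis \eqref{eq:Mjhip} is exactly what makes this expectation at most $t_j/(8n)$; since altering one sample point moves the count by at most one, a bounded-differences estimate upgrades this to $\#\{i:|f_j(X_i)-Pf_j|>M_j/2\}\le t_j$ for all $f_j\in\sF_j$ simultaneously, on an event of probability at least $1-e^{-t_j}$. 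This is the source of the $\sum_j b_j e^{-t_j}$ term. For $b_j=0$ the family $\sF_j^o$ is bounded by $M_j$, the count is identically zero, and $\tau_{M_j}$ acts trivially.

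The combinatorial core comes next. Writing $c:=\lfloor\eps n\rfloor$ and $k:=\phi n$, the contamination changes at most $c$ points, so the order statistics of $f$ on the contaminated sample are dominated by shifted clean ones, $f(x^\eps_{(i)})\le f(x_{(i+c)})$ for $i\le n-c$; summing over $k+1\le i\le n-k$ turns $\Tmhat_{n,\phi n}(f;X^\eps_{1:n})$ into an average of clean order statistics over a window of width $n-2k$ sitting inside $[c+1,n-c]$, at the cost of consuming $c$ of the budget. On the event above, at most $\sum_j t_j$ clean points have some coordinate with $|f_j(X_i)-Pf_j|>M_j/2$, and since the residual budget on each side is $k-c\ge\sum_j t_j$, such points can be kept out of the retained window. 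I would then replace the retained clean values of $f-Pf$ by $\sum_j a_j\tau_{M_j}(f_j-Pf_j)$ one coordinate at a time: each replacement is exact away from the clipped points, and the clipped points are either trimmed off or contribute at most a clipping gap, producing the bias term ${\rm rem}_{M_j}(\sF_j^o,P)$ together with a boundary term of order $\phi M_j$ per coordinate (the $6\phi M_j$ in $\eta_j$). Reconciling this per-coordinate, per-level truncation with the single, $f$-dependent sorting performed by the trimmed mean is the step I expect to be the main obstacle, and it is precisely where the split $k\ge c+\sum_j t_j$ is genuinely used.

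After these reductions the problem is reduced to controlling, for each $j$, the uniform deviation $\sup_{f_j\in\sF_j}\big|\tfrac1n\sum_i\tau_{M_j}(f_j(X_i)-Pf_j)-P\tau_{M_j}(f_j-Pf_j)\big|$ of a family bounded by $M_j$. Its expectation is $\Emp_n(\tau_{M_j}\circ\sF_j^o,P)$, and since coordinatewise bounded differences are of size $M_j/n$, a Bousquet-type inequality gives, with probability at least $1-e^{-x_j}$, a bound of the form $2\Emp_n(\tau_{M_j}\circ\sF_j^o,P)+\nu_2(\tau_{M_j}\circ\sF_j^o,P)\sqrt{2x_j/n}+O(M_j x_j/n)$, matching the remaining pieces of $\eta_j$ (the factor $2$ on $\Emp_n$ and the residual $x_j M_j/n$ absorbing the normalization $1/(n-2k)$ versus $1/n$). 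Multiplying the $j$-th estimate by $|a_j|$, summing over $j$, and taking a union bound over the two failure events per coordinate yields the overall probability $1-\sum_j(b_j e^{-t_j}+e^{-x_j})$ and the stated inequality.
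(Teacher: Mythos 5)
Your architecture --- a uniform ``few large values'' event per coordinate, a combinatorial comparison of the trimmed mean to truncated empirical averages, and Bousquet-type concentration for the truncated classes --- is the same as the paper's (Lemmas \ref{lem:counting} and \ref{lem:bounding} plus Theorem \ref{thm:bousquet}), and the symmetry reduction $\Tmhat_{n,\phi n}(-f;\cdot)=-\Tmhat_{n,\phi n}(f;\cdot)$ is fine. But two of the three steps have genuine gaps. First, the counting step cannot be closed with a bounded-differences estimate. You need $\sup_{f_j\in\sF_j}\#\{i:|f_j(X_i)-Pf_j|>M_j\}\le t_j$ with failure probability $e^{-t_j}$ when the normalized mean of the count is only $t_j/(8n)$; McDiarmid gives deviations of order $\sqrt{nt_j}$ around the mean, which exceeds $t_j$ whenever $t_j\ll n$ --- and $t_j$ is typically of order $\ln(1/\alpha)$ or $\eps n$. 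The constant $t_j/(8n)$ in \eqref{eq:Mjhip} is calibrated for a Bernstein-type bound: the paper replaces the indicator by the $2/M_j$-Lipschitz surrogate $\eta_{M_j}$ sandwiched between $\Ind\{r>M_j\}$ and $\Ind\{r>M_j/2\}$ and applies Bousquet's inequality, using that the surrogate's variance is at most its expectation $\le t_j/(8n)$; only then do the three Bousquet terms sum to less than $t_j$. (The sandwich also explains why the conclusion must be at threshold $M_j$ while the hypothesis is at $M_j/2$: your stated count at threshold $M_j/2$ would need tail control at $M_j/4$, which \eqref{eq:Mjhip} does not provide.)

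Second, the step you yourself flag as ``the main obstacle'' is the crux, and it is not resolved. The bad points --- those with $|f_j(X_i)-Pf_j|>M_j$ for some $j$ --- need not be extreme in the ordering by $f=\sum_j a_jf_j$ (coordinates can cancel), so they cannot be ``kept out of the retained window'': the window is dictated by the sorting, not chosen by you; and at a retained bad point your per-coordinate replacement compares an \emph{unbounded} value of $f-Pf$ with a truncated one, so ``at most a clipping gap'' is unjustified. The paper's Lemma \ref{lem:bounding} supplies the missing device: with $M=\sum_j|a_j|M_j$, at most $\lfloor\eps n\rfloor+\sum_j t_j\le\phi n$ contaminated-sample points can have $|f-Pf|>M$ (only contaminated or bad indices qualify), so after trimming $\phi n$ points from each end every retained value lies in $[-M,M]$ and $\Tmhat^\eps_{n,\phi n}(f-Pf)=\Tmhat^\eps_{n,\phi n}\bigl(\tau_M\circ(f-Pf)\bigr)$ exactly; one then passes to the full contaminated average of $\tau_M\circ(f-Pf)$ (difference $\le 4\phi M$, all summands bounded by $M$) and finally to $\sum_j a_j\Pmhat_n\tau_{M_j}(f_j-Pf_j)$ (the summands agree off the bad/contaminated set and differ by at most $2M$ on it, giving another $2\phi M$). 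A smaller point: ${\rm rem}_{M_j}(\sF_j^o,P)=\sup_{f_j}|P\tau_{M_j}(f_j-Pf_j)|$ is the \emph{population} truncation bias, which enters when the truncated empirical average is recentered at $Pf_j$ in the final concentration step, not a sample-level clipping artifact as your step (ii) suggests.
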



\begin{proof}[Proof of Theorem \ref{thm:tmlinear}] We start with some notation and conventions. For brevity, we will occasionally omit $P$ from our notation; for instance, we write $\Emp_n(\sF)$ instead of $\Emp_n(\sF, P)$. We also let $X_{1:n}\stackrel{i.i.d.}{\sim}P$ be a sample from $P$ with
\[\#\{i\in[n]\,:\,X_i\neq X_i^\eps\}\leq \lfloor \eps n\rfloor.\] 
$\Pmhat_n$ denotes the empirical measure of the clean sample, meaning:
\[\Pmhat_n\,g = \frac{1}{n}\sum_{i=1}^ng(X_i)\mbox{ for }g:\bX\to \R,\]
and we use $\Tmhat^\eps_{n,\phi n}(g):=\Tmhat_{n,\phi n}(g;X_{1:n}^\eps)$ to denote the trimmed mean computed over the contaminated sample. 

The core observation of our proof is that if $k\in\N$ is properly defined and the $M_j$ are big enough, the ``trimmed empirical process'' of linear combinations on a contaminated sample is close to the linear combination of truncated empirical processes on the clean sample, i.e.
\begin{equation}\label{eq:flinearcombapprox} f = \sum_{j=1}^m a_jf_j\mapsto  \widehat{T}^\eps_{n,k} (f) - Pf\approx \sum_{j=1}^m a_j\Pmhat_n\,\tau_{M_j}(f_j(X_{i}) -Pf_j),\end{equation}
which is well understood and satisfies Bernstein-type concentration bounds. This will require two lemmata that we state next. Let $\sG$ be a family of functions and $M>0$, define
\begin{eqnarray}\label{eq:defcountingclean} V_M(\sG) &:=& \sup_{g\in\sG} \sum_{i=1}^n\Ind\{|g(X_i)|>M\},\end{eqnarray}
in words, $V_M(\sG)$ counts the number of large values of $g$ for the worst-case function $g\in \sG$. 

The following ``Counting Lemma'' -- an abstract version of \cite[Lemma 1]{Lugosi2021} -- gives a way to bound the probability that the counting function $V_{M_j}(\sF_j^o)$ exceeds a certain value $t_j$. This will be useful whenever $\sF_j^o$ is not uniformly bounded.

\begin{lemma}[Counting lemma; proof in Appendix \ref{appendix:proofslemmata}]
\label{lem:counting}
Let $\sG$ be a countable family of functions, $t\in\N$ and $n>1$. Assume $M>0$ is such that:
\begin{equation}\label{eq:assumptioncounting}
    \sup_{g\in\sG}P\left\{|g(X)|>\frac{M}{2}\right\} + \frac{8 \Rad_n(\tau_M\circ \sG, P)}{M}\leq \frac{t}{8n}.
\end{equation}
Then $V_M(\sG)\leq t$ with probability at least $1 - e^{-t}$.
\end{lemma}

The next lemma says that if the counting functions $\{V_{M_j}(\sF_j^o)\}_{j \in [m]}$ are all small, then (\ref{eq:flinearcombapprox}) can be justified. 

\begin{lemma}[Bounding lemma; proof in Appendix \ref{appendix:proofslemmata}]
\label{lem:bounding} Let $m\in\N$ and $\sG_1,\dots,\sG_m$ be families of functions from $\bX$ to $\R$. 
Also let $t_1,t_2,\dots,t_m\in\N$ and $M_1,M_2,\dots,M_m\geq 0$ be such that $V_{M_j}( \sG_j ) \leq t_j$ for each $j \in [m]$. If $\phi$ satisfies
\[\frac{\lfloor \varepsilon n\rfloor + \sum_{j=1}^mt_j}{n}\leq \phi <\frac{1}{2},\]
then for any linear combination
\[g:=\sum_{j=1}^ma_jg_j\mbox{ with  }a_j\in\R\mbox{ and }g_j\in\sG_j\mbox{ for each $j\in[m]$},\]
we have
\begin{equation}
    \label{eq:inequalityTM}
    \left|\Tmhat_{n,\phi n}^{\varepsilon}\left(g\right) - \sum_{j=1}^ma_j\Pmhat_n\left(\tau_{M_j}\circ g_j\right)\right| \leq  6\phi \sum_{j=1}^m |a_j|\,M_j.
\end{equation}
\end{lemma}


We come back to the proof of Theorem \ref{thm:tmlinear}. Since the families $\sF_j$ are $1$-compatible with $P$, we may assume they are countable. Our hypotheses ensure that Lemma \ref{lem:counting} can be applied to each $V_{M_j}(\sF_j^o)$ (with the corresponding $t_j$) whenever $b_j=1$. On the other hand, when $b_j=0$, the class $\sF_j^o$ is bounded and $V_{M_j}(\sF_j^o)=0
\leq t_j$ is automatic. Combining this with Lemma \ref{lem:bounding} (taking $\sG_j = \sF^o_j$ for every $j\in[m]$) we have, with probability at least $1 - \sum_{j=1}^mb_j\,e^{-t_j}$, for all $f=\sum_{j=1}^ma_j f_j$,
\[ \left|\Tmhat^\varepsilon_{n,\phi n} (f) - Pf\right| \leq  \left|\sum_{j=1}^m a_j \Pmhat_n \tau_{M_j}(f_j - Pf_j)\right| +  6\phi \sum_{j=1}^m |a_j|\,M_j. \]

We can bound
\[ \left|\sum_{j=1}^m a_j \Pmhat_n \tau_{M_j}(f_j - Pf_j)\right| \leq \sum_{j=1}^m |a_j| \left\{ \sup_{f_j \in \sF_j} \left| (\Pmhat_n-P) \tau_{M_j}(f_j - Pf_j) \right| + {\rm rem}_{M_j}(\sF_j^o,P) \right\}\]

We bound the suprema on the RHS using Bousquet's version of Talagrand's concentration inequality for each class $\sF_j^o$ (Theorem \ref{thm:bousquet}) and observing that
\[\left|\tau_{M_j}(f_j - Pf_j) - P\tau_{M_j}(f_j - Pf_j)\right| \leq 2M_j.\]
We finish using $\sqrt{a+b} \leq \sqrt{a} + \sqrt{b}$ and $\sqrt{ab} \leq a+\frac{b}{4}$ to bound:
\[\sqrt{\frac{8 x_j M_j}{n}  \Emp_n\left(\tau_{M_j}\circ \sF_j^o , P\right)} 
\leq \frac{2x_j}{n}M_j + \Emp_n\left( \tau_{M_j}\circ\sF_j^o, P \right).\]
\end{proof}

\subsection{Bounds for uniform mean estimation}\label{sub:proof:uniformTM}
We now prove our main result on uniform mean estimation, Theorem \ref{thm:uniformTM}. This will require converting the error bounds in Theorem \ref{thm:tmlinear} into moment-based quantities. The next lemma does this; it is somewhat stronger than what we need.  
\begin{lemma}[Proof in the supplementary material] Let $\sF_j$, $t_j$, $b_j$, $x_j$ and $M_j$ satisfy the hypothesis of Theorem \ref{thm:tmlinear}, define
\[ \Bar{\eps} :=  \frac{\left( \frac{1}{2} - \eps \right) \wedge \eps}{1+\sum_{j=1}^m b_j}~,~ t_j = b_j \left(\lceil x_j \rceil \vee \lceil \Bar{\eps} n \rceil\right) \text{ and } \phi := \frac{\lfloor \eps n \rfloor + \sum_{j=1}^m t_j}{n}.\]

Also assume $\phi < \frac{1}{2}$, $x_{j} \geq \frac{1}{3}$ for all $j$ with $b_j=1$ and let 
\[ C_j^\eps := 192\left(1 + \frac{\sum_{l=1}^m t_l}{t_j} + \frac{\eps}{ \bar{\eps} }\right). \]
Then, for every $j$ with $b_j=1$, it is possible to choose $M_{j}$ satisfying (\ref{eq:Mjhip}) such that
\begin{align}\label{eq:sm_bound}
    2\Emp_n\left( \tau_{M_{j}}\circ\sF_{j}^o,P\right) + \eta_{j} &\leq C_j^\eps\left\{8\Emp_n( \sF_{j}, P ) + \inf_{q\in[1,2]}\nu_q( \sF_{j},P ) \left( \frac{x_{j}}{n} \right)^{1-\frac{1}{q}}\right.\\ \nonumber
    &~~~~ + \left.\inf_{p \geq 1}\nu_p( \sF_{j}, P )  \left( \frac{2\eps}{1+\sum_{l=1}^m b_l} \right)^{1 - \frac{1}{p}}\right\}.
\end{align}
\label{lemma:boundeta}
\end{lemma}

\begin{proof}[Proof of Theorem \ref{thm:uniformTM}] Apply Theorem \ref{thm:tmlinear} using $m=1$, $b_1=1$, $x_1:=\ln\frac{3}{\alpha}$ with $t_1$ and $M_1$ as chosen in the Lemma \ref{lemma:boundeta}. Inspection reveals that this leads to the bound claimed in Theorem \ref{thm:uniformTM}.\end{proof}

\begin{appendix}

\section{Proofs of the counting and bounding lemmata}
\label{appendix:proofslemmata}
\begin{proof}[Proof of Lemma \ref{lem:counting}] As in \cite[Lemma 1]{Lugosi2021}, we replace  $V_M(\sG)$ by a \textit{smoother empirical process upper bound} to which we will be able to apply Ledoux-Talagrand contraction and Bousquet's version of Talagrand's concentration inequality. Specifically, we define
\[\eta_M(r):= \left(\frac{2}{M}\left(r-\frac{M}{2}\right)_+\right)\wedge 1\,\,(r\in\R),\]
which is $2/M$-Lipschitz and satisfies 
\[\forall r \geq 0 ~:~ \Ind\{r>M\}\leq \eta_M(r)\leq \Ind\{r>M/2\}.\]
Notice that $\eta_M=\eta_M\circ \tau_M$, a fact that will be useful later. 

To continue, we note that
\begin{equation}\label{eq:varianceexp}\forall g\in\sG\,:\, P\,(\eta_M\circ |g|)^2\leq P\,(\eta_M\circ |g|)\leq P\left\{|g(X)| > \frac{M}{2}\right\}.\end{equation}
One consequence of this is that
\[V_M(\sG) = \sup_{g\in\sG}\sum_{i=1}^n \Ind\{|g(X_i)|>M\}\leq  \sup_{g\in\sG}\sum_{i=1}^n \eta_M\circ |g(X_i)| \leq n\sup_{g\in\sG} P\left\{|g(X)| > \frac{M}{2}\right\}  + nW\]
where $W$ is the empirical process
\[W:=\sup_{g\in\sG}\left|\frac{1}{n}\sum_{i=1}^n\,(\eta_M\circ |g(X_i)| - P\,(\eta_M\circ |g|))\right|.\]
Therefore, 
\[\Pr\left\{V_M(\sG)>t\right\}\leq \Pr\left\{W>\frac{t}{n}-\sup_{g\in\sG}P\left\{|g(X)| > \frac{M}{2}\right\}\right\},\]
and the present lemma will follow once we bound the RHS above by $e^{-t}$. To do so, we apply Bousquet's version of Talagrand's concentration inequality (Theorem \ref{thm:bousquet}) to the class $\eta_M\circ |\sG| = \{ \eta_M(|g|) : g \in \sG \}$ with $C=1$ (as $0\leq \eta_M\leq 1$). Letting $v =  n\sigma^2(\eta_M\circ |\sG|) + 2 \Emp_n(\eta_M\circ |\sG|,P)$ be as in Theorem \ref{thm:bousquet}, we deduce
\[ \Pr\left\{W> \Emp_n(\eta_M\circ |\sG|,P) + \sqrt{\frac{2vt}{n}} + \frac{t}{3n}\right\}\leq e^{-t}.\]
Therefore, we will be done once we show that
\[ t \geq n \sup_{g\in\sG}P\left\{|g(X)| > \frac{M}{2}\right\} + n\,\Emp_n(\eta_M\circ |\sG|,P) + \sqrt{2vtn} + \frac{t}{3}.\]

To prove this last inequality, we bound the empirical process via symmetrization, contraction ($\eta_M$ is $2/M$-Lipschitz), the fact that $\eta_M \circ |\cdot| = \eta_M\circ |\tau_M|$, and our assumption (\ref{eq:assumptioncounting}) relating $t$, $M$ and $n$:
\begin{eqnarray*}
\Emp(\eta_M\circ |\sG|,P) &\leq & 2\Rad_n(\eta_M\circ |\sG|,P) \\
\mbox{($\eta_M\circ |\cdot|=\eta_M\circ |\tau_M|$)}& =&  2\Rad_n(\eta_M\circ |\tau_M\circ \sG|,P) \\
\mbox{(contraction $+ ~\eta_M \circ |\cdot|$ is $2/M$-Lip.)} &\leq &  \frac{4}{M}\,\Rad_n(\tau_M\circ \sG,P)
\end{eqnarray*}

To bound $\sigma^2(\eta_M\circ |\sG|)$, we use (\ref{eq:varianceexp}):
\[\sigma^2(\eta_M\circ |\sG|) = \sup_{g\in\sG}P \,(\eta_M(|g|)-P\eta_M(|g|))^2\leq  \sup_{g\in\sG}P\left\{|g(X)| > \frac{M}{2}\right\}.\]

As a consequence, the variance parameter $v$ in Bousquet's version of Talagrand's concentration inequality can be bounded using (\ref{eq:assumptioncounting}):
\[nv \leq n\sup_{g\in\sG}P\left\{|g(X)| > \frac{M}{2}\right\} + \frac{8n\,\Rad_n(\tau_M\circ \sG,P)}{M} \leq \frac{t}{8}.\]

Combining the above bounds, we arrive at
\[\frac{t}{3} + \sqrt{2vtn} + n\,\Emp_n(\eta_M\circ |\sG|,P) + n\sup_{g\in\sG}P\left\{|g(X)| > \frac{M}{2}\right\} \leq \frac{t}{3} + \frac{t}{2} + \frac{t}{8}\]
which is enough to conclude the proof. \end{proof}

\begin{proof}[Proof of Lemma \ref{lem:bounding}] Define $\tilde{g}_j:=a_jg_j$ and $\tilde{M}_j:=|a_j|\,M_j$ for each $j\in[m]$, so that $g = \sum_{j=1}^m \tilde{g}_j$. We also have
\begin{equation}\label{eq:truncadentroefora}\forall j\in[m]\,:\,\tau_{\tilde{M}_j}\circ \tilde{g}_j = a_j\,\tau_{M_j}\circ g_j.\end{equation}

Our assumption on $V_{M_j}(\sG_j)$ implies that for each $j\in [m]$ there are at most $t_j$ indices such that $|\tilde{g}_j(X_i)|>\tilde{M}_j$. Therefore, the set
\[B^\eps:=\{i\in [n]\,:\, X_i\neq X^\eps_i\mbox{ or }|\tilde{g}_j(X_i)|>\tilde{M}_j\mbox{ for some $j\in [m]$}\}\]
has cardinality bounded by:
\begin{equation}\label{eq:boundBeps}\# B^\eps\leq \lfloor \eps n\rfloor + \sum_{j=1}^m t_j \leq \phi \, n<\frac{n}{2}.\end{equation}
Now define $M:=\sum_{j=1}^m\tilde{M}_j$. Since $g = \sum_{j=1}^m\tilde{g}_j$, we conclude from (\ref{eq:boundBeps}) that there are at most $\phi n$ indices $i\in[n]$ with $|g(X^\eps_i)|>M$. Since the $\phi n$ largest and smallest values of $g(X_i^\eps)$ are excluded from the trimmed mean, we obtain: 
\begin{equation}\label{eq:cantruncateintrimmed}\Tmhat^{\varepsilon}_{n,\phi n}(g)=\Tmhat_{n,\phi n}^{\varepsilon}(\tau_M\circ g).\end{equation}
In what follows, we use this identity to compare the trimmed mean of a sum to a sum of truncated empirical means. More specifically, we let $\Pmhat^\eps_n$ denote the empirical measure of the contaminated sample, and use (\ref{eq:truncadentroefora}) and (\ref{eq:cantruncateintrimmed}) to bound:
\begin{align}\label{eq:boundingtwoterms}\left|\Tmhat_{n,\phi n}^{\varepsilon}\left(g\right) - \sum_{j=1}^ma_j\Pmhat_n\left(\tau_{M_j}\circ g_j\right)\right| & \leq \left|\Tmhat_{n,\phi n}^{\varepsilon}\left(\tau_M\circ g\right) - \Pmhat_n^\eps\,(\tau_M\circ g)\right| \\ \nonumber & + \left|\Pmhat_n^\eps\,(\tau_M\circ g)- \sum_{j=1}^m\Pmhat_n\left(\tau_{\tilde{M}_j}\circ \tilde{g}_j\right)\right|.\end{align}

To bound the first term in the RHS, notice that the empirical mean of $\tau_M(g(X_i^\eps))$ is an average over all sample points in the contaminated sample, whereas 
 $\Tmhat_{n,\phi n}^{\varepsilon}\left(\tau_M\circ g\right)$
 is an average over a subset of these points of size $(1-2\phi)n$. Since all terms in both averages are bounded by $M$ in absolute value, we conclude:
\[\Pmhat^\eps_n\left(\tau_M\circ g\right) = (1-2\phi)\,\Tmhat_{n,\phi n}^{\varepsilon}(\tau_M\circ g) +2\phi \eta\] 
for some $|\eta|\leq M$, from which it follows that:
\begin{equation}\label{eq:trimtruncatehere}\left|\Pmhat^\eps_n\,(\tau_M\circ g) - \Tmhat_{n,\phi n}^{\varepsilon}(\tau_M\circ g)\right|\leq 2\phi\,(|\eta| + |\Tmhat_{n,\phi n}^{\varepsilon}(\tau_M\circ g)|)\leq 4\phi M.\end{equation}
We now consider the difference
\begin{equation}\label{eq:differenceonemultipletruncates}\Pmhat^\eps_n\,(\tau_M\circ g) - \sum_{j=1}^m\Pmhat_n(\tau_{\tilde{M_j}}\circ \tilde{g}_j) = \frac{1}{n}\sum_{i=1}^n \left[\tau_M\circ g(X^\eps_i) - \sum_{j=1}^m\,\tau_{\tilde{M_j}}\circ \tilde{g_j}(X_i)\right]\end{equation}

The $n$ terms inside the square brackets in the RHS are bounded in absolute value by $M + \sum_{j=1}^m\tilde{M}_j = 2M$. Recalling (\ref{eq:boundBeps}), we {\em claim} that if $i\in[n]\backslash B^\eps$, the corresponding term in the RHS of (\ref{eq:differenceonemultipletruncates}) is zero. To see this, fix some $i$ and note that:
\begin{itemize}
\item on the one hand, for each $j\in[m]$, $|\tilde{g}_j(X_i)|\leq \tilde{M}_j$ and so $\tau_{\tilde{M_j}}\circ \tilde{g}_j(X_i)=\tilde{g}_j(X_i)$;
\item on the other hand, since $X_i=X_i^\eps$, and using the above bounds, we have $|g(X_i^\eps)| = |g(X_i)|\leq M$ and \[\tau_M\circ g(X_i^\eps) = g(X_i) = \sum_{j=1}^m\tilde{g}_j(X_i).\]
\end{itemize}
It follows from the claim that:
\[\forall i\in[n]\,:\, \left|\tau_M\circ g(X^\eps_j) - \sum_{j=1}^n\,\tau_{\tilde{M_j}}\circ \tilde{g_j}(X_i)\right|\leq 2M\Ind{\{i\in B^\eps\}},\]
and combining this with (\ref{eq:boundBeps}) and (\ref{eq:differenceonemultipletruncates}) gives:\[\left|\Pmhat^\eps_n\,(\tau_M\circ g) - \sum_{j=1}^m\Pmhat_n(\tau_{\tilde{M_j}}\circ \tilde{g}_j)\right|\leq \frac{2M}{n}\,\# B^\eps\leq 2M\phi\]
The lemma follows from plugging the preceding display together with inequality (\ref{eq:trimtruncatehere}) into the RHS of (\ref{eq:boundingtwoterms}).\end{proof}

\section{Bousquet's version of Talagrand's concentration inequality}
\label{appendix:usefulresults}

We state here the following classical result we use in our proofs. Recall the definitions from \S \ref{sub:admissible}.

\begin{theorem}[Bousquet's version of Talagrand's concentration inequality \cite{Bousquet2002}]\label{thm:bousquet} Assume that a measure $P$ over $(\bX, \sX)$ is 1-compatible with a family of functions $\sG$ and $|g-Pg|\leq C ~ \forall g \in \sG$ for some constant $C>0$. Define:
\[W:= \sup_{g\in\sG} \left|\frac{1}{n}\sum_{i=1}^n\,(g(X_i) - Pg)\right|\mbox{ where }X_{1:n}\stackrel{i.i.d.}{\sim}P\]
Also set $\sigma^2(\sG):=\sup_{g\in\sG}P\,(g-Pg)^2$ and $v := 2C\,\Emp_n(\sG,P) + \sigma^2(\sG)$. Then
\[\forall x>0\,:\,\Pr\left\{W \geq \Emp_n(\sG,P) + \sqrt{\frac{2xv}{n}} + \frac{Cx}{3n}\right\}\leq e^{-x}.\]
\end{theorem}

\section{Moment conditions for the trimmed mean: proof of \MTlemmabounds}\label{sec:sm_9}

\begin{proof}[Proof of \MTlemmabounds]
Our goal is to find, for the values $j$ with $b_j=1$, $M_j$ such that (\ref{eq:Mjhip}) holds and the bound (\ref{eq:sm_bound}) is valid. To start, define
\[ b = \sum_{j=1}^m b_j\text{ and }t =\sum_{j=1}^m t_j. \]

\begin{center}{\bf First step:} choose $M_j$ as a function of $t_j$ and $\sF_j$.\end{center}

Notice that contraction and symmetrization gives
\[ M_j \geq \frac{256\,n}{t_j} \Emp_n(\sF_j) \Rightarrow \frac{8\,\Rad_n(\tau_{M_j} \circ \sF_j^o)}{M_j} \leq \frac{16\,\Emp_n(\sF_j)}{M_j} \leq \frac{t_j}{16 n}. \]

So, (\ref{eq:Mjhip}) follows if we can define
\[M_j = m_j(t_j)\vee \left(\frac{256 n}{t_j}\Emp_n(\sF_j)\right),\]
with $m_j(t_j)$ such that
\[\forall f\in\sF_j\,:\, P\left\{|f(X)-Pf|>\frac{m_j(t_j)}{2}\right\}\leq \frac{t_j}{16n}.\]

We can explicitly find a choice of $m_j(t)$. Markov's inequality gives, for every $p \geq 1$,
\[ \sup_{f\in\sF_j}\mathbb{P}\left(|f(X)-Pf|>\frac{m_j(t_j)}{2}\right) \leq 2^p\frac{\nu_p^p(\sF_j) }{m_j(t_j)^p} \]
and so we can take $m_j(t_j) = 2 \nu_p(\sF_j) \left( \frac{16n}{t_j} \right)^\frac{1}{p}$. Thus, we define
\begin{align}
    \label{eq:defM}
    M_j = M_j(t) &:= \left(2 \nu_p(\sF_j) \left( \frac{16n}{t_j} \right)^\frac{1}{p} \right)\vee \left(\frac{256\,n}{t_j}\Emp_n(\sF_j)\right) \nonumber\\
    &\leq 32 \nu_p(\sF_j) \left( \frac{t_j}{n} \right)^{-\frac{1}{p}} + \frac{256\,n}{t_j}\Emp_n(\sF_j).
\end{align}

\begin{center}{\bf Second step:} bound $2\Emp_n\left( \tau_{M_j}\circ\sF_j^o,P\right) + \eta_j$. \end{center}

Given our choice of $M_j$ we bound the terms of $\eta_j$. We can easily bound
\[{\rm rem}_{M_j}(\sF_j^o) = \sup_{f \in \sF_j} P (f-Pf) \mathbf{1}_{|f - Pf| > M_j} \leq \frac{\nu_p^p(\sF_j)}{M_j^{p-1}} \leq \frac{\nu_p^p(\sF_j)}{m_j(t_j)^{p-1}} \leq \nu_p(\sF_j) \left( \frac{t_j}{n} \right)^{1-\frac{1}{p}} \]
and, using contraction and symmetrization,
\[ \Emp_n\left( \tau_{M_j}\circ\sF_j^o\right) \leq 2\Rad_n\left( \tau_{M_j}\circ\sF_j^o\right) \leq 2\Rad_n\left( \sF_j^o\right) \leq 4 \Emp_n(\sF_j). \]

We now bound the largest variance in $\tau_{M_j} \circ \sF_j^o$ in terms of the moment parameters of $\sF_j$. For $f\in \sF_j$,
\[\mathbb{V}\left(\tau_{M_j}\circ (f-Pf)\right)\leq P\,\left(\tau_{M_j}\circ (f-Pf)\right)^2\leq P\,\left(M_j\wedge |f-Pf|\right)^2,\]
where the first inequality follows from bounding the variance by the second moment, and the second is a consequence of $|\tau_{M_j}\circ (f-Pf)|= M_j\wedge |f-Pf|$. Now, for any $1\leq q\leq 2$,
\[\left(M_j\wedge |f-Pf|\right)^2\leq M_j^{2-q}\,|f-Pf|^q,\]
so that
\[\mathbb{V}\left(\tau_{M_j}\circ (f-Pf)\right)\leq M_j^{2-q}\|f-Pf\|_{L^q(P)}^q\leq M_j^{2-q}\,\nu_q^q(\sF_j).\]

It gives
\[ \nu_2\left( \tau_{M_j}\circ\sF_j^o\right) \sqrt{\frac{2x_j}{n}} \leq \sqrt{2} \left( \frac{M_jx_j}{n} \right)^{1 - \frac{q}{2}} \left( \nu_q(\sF_j) \left(\frac{x_j}{n}\right)^{1-\frac{1}{q}} \right)^\frac{q}{2} \]
and we can bound, using Young's inequality and $\sqrt{2}<2$,
\begin{align*}
    \nu_2\left( \tau_{M_j}\circ\sF_j^o\right) \sqrt{\frac{2x_j}{n}} &\leq \sqrt{2}\left(1-\frac{q}{2}\right)\frac{M_jx_j}{n} + \sqrt{2} \frac{q}{2} \nu_q(\sF_j) \left(\frac{x_j}{n}\right)^{1-\frac{1}{q}} \\
    & \leq 2\left(\frac{M_jx_j}{2n} + \nu_q(\sF_j) \left(\frac{x_j}{n}\right)^{1-\frac{1}{q}}\right).
\end{align*}

Notice that
\[ 6\phi + \frac{4x_j}{n} = \left( \frac{6t}{t_j} + \frac{6\lfloor \eps n\rfloor}{t_j} + \frac{4x_j}{t_j} \right)\frac{t_j}{n} \leq \left(4 + 6\frac{t}{t_j} + 6\frac{\eps}{\bar{\eps} } \right)\frac{t_j}{n}, \]
taking $C'_j = 32\left(4 + 6\frac{t}{t_j} + 6\frac{\eps}{\bar{\eps} } \right)$ and using (\ref{eq:defM}), we have
\[  M_j \left( 6\phi + \frac{4x_j}{n} \right) \leq C'_j\left( \nu_p(\sF_j) \left( \frac{t_j}{n} \right)^{1-\frac{1}{p}} + 8\Emp_n(\sF_j)\right) \]

Combining the bounds gives
\begin{align*}
    2\Emp_n\left( \tau_{M_j}\circ\sF_j^o,P\right) + \eta_j &\leq 8\Emp_n( \sF_j ) + 2\nu_q(\sF_j) \left( \frac{x_j}{n} \right)^{1-\frac{1}{q}}\\
    & ~~~~ + M_j \left( 6\phi + \frac{4x_j}{n} \right) + \nu_p(\sF_j) \left( \frac{t_j}{n}\right)^{1 - \frac{1}{p}} \\
    &\leq 8(1+C_j')\Emp_n( \sF_j ) + 2\nu_q(\sF_j) \left( \frac{x_j}{n} \right)^{1-\frac{1}{q}}\\
    & ~~~~ + (1+C_j')\nu_p(\sF_j) \left( \frac{t_j}{n}\right)^{1 - \frac{1}{p}}
\end{align*}

\begin{center}{\bf Final step:} finish proof by case analysis on $t_j$.\end{center}

Recall
\[C_j^\eps := 192\left(1 + \frac{t}{t_j} + \frac{\eps}{ \bar{\eps} }\right).\]

Since $x_j \geq \frac{1}{3}$ and $q \in [1,2]$ we have
\[ \lceil x_j \rceil^{1-\frac{1}{q}} \leq (x_j+1)^{1-\frac{1}{q}} \leq 2 x_j^{1-\frac{1}{q}}, \]
and we bound
\[
\left( \frac{\lceil x_j \rceil}{n}\right)^{1 - \frac{1}{q}} \leq 2\left( \frac{ x_j }{n}\right)^{1 - \frac{1}{q}}.
\]

We also have $\lceil a \rceil \leq 2a$ when $a \geq 1$. Thus, if $\eps n \geq 1$ (i.e., when there is a contaminated sample point) we have
\[
\frac{\lceil \Bar{\eps} n \rceil}{n} \leq \frac{1}{n} \left\lceil \frac{\eps n}{1+b} \right\rceil \leq \frac{2\eps}{1+b}.
\]
The case $\eps\,n<1$ means that there is no contamination and we might replace $\eps$ with $0$, obtaining the same bound. 

We now take the infimum over $q \in [1,2]$ and $p\geq 1$. If $t_j = \lceil x_j \rceil$,
\[2\Emp_n\left( \tau_{M_j}\circ\sF_j^o,P\right) + \eta_j \leq 8C_j^\eps\Emp_n( \sF_j ) + C_j^\eps \inf_{q \in [1,2]}\nu_q(\sF_j) \left( \frac{ x_j}{n}\right)^{1 - \frac{1}{q}}.\]

The case $t_j = \lceil \Bar{\eps} n \rceil$ gives
\begin{align*}
2\Emp_n\left( \tau_{M_j}\circ\sF_j^o,P\right) + \eta &\leq 8C_j^\eps\Emp_n( \sF_j )\\
& ~~~~ + 4\inf_{q \in [1,2]}\nu_q(\sF_j) \left( \frac{x_j}{n} \right)^{1-\frac{1}{q}} + C_j^\eps \inf_{p\geq 1} \nu_p(\sF_j) \left( \frac{2\eps}{1+b} \right)^{1 - \frac{1}{p}}.
\end{align*}

The final bound follows from considering the two possible values of $t_j$ and performing some overestimates.
\end{proof}

\section{Details on vector mean estimation}\label{sec:sm_10}
In this section we discuss how \MTtmbounds\ can be used to prove \MTthmvector. We also show that our estimator is measurable when $\bX = \R^d$ for some $d \in \N$. Recall that in the context of Problem 2 one has a separable, reflexive Banach space $(\bX, \| \cdot \|)$ with dual $\bX^*$. We also let $P$ be a probability measure such that $P \| \cdot \| = \Ex_{X \sim P}\|X\|<\infty$. This means that the map $f\in \bX^*\mapsto Pf$ is linear and bounded. As a result, there exists a unique $\mu_P \in \bX^{**}=\bX$, called the mean of $P$, such that
\[ f(\mu_P) = Pf \text{ for all }f\in \bX^*. \]

The problem of estimating $\mu_P$ is closely related to the problem of uniform mean estimation over the class $\sF$ given by the dual unit ball of $\bX^*$. To see this let $\hat{E}_f$ be any given estimator for $f \in \sF$, let $\hat{\mu}:\bX ^n\to\bX$ satisfy
\begin{equation}
    \label{eq:strategymean}
    \forall x_{1:n}\in\bX^n\,:\,\hat{\mu}(x_{1:n})\in \argmin_{\mu \in \bX} \left( \sup_{f \in \sF} \left| \hat{E}_f(x_{1:n}) - f(\mu) \right| \right).
\end{equation}
Then, for any given $x_{1:n} \in \bX^n$,
\begin{align*}
    \| \hat{\mu}(x_{1:n}) - \mu_P \| & = \sup_{f \in \sF} \left|f( \hat{\mu}(x_{1:n}) - \mu_P )\right|\mbox{  (because $\sF$ is the dual unit ball)} \\
    & = \sup_{f \in \sF} \left|f( \hat{\mu}(x_{1:n})) - f(\mu_P)\right| \\
    & = \sup_{f \in \sF} \left|f( \hat{\mu}(x_{1:n})) - \hat{E}_f(x_{1:n})\right| + \sup_{f \in \sF}\left| \hat{E}_f(x_{1:n})  - f(\mu_P) \right|\\
    & \leq \sup_{f \in \sF} \left|f( \mu_P) - \hat{E}_f(x_{1:n})\right| + \sup_{f \in \sF}\left| \hat{E}_f(x_{1:n})  - f(\mu_P) \right|\\
    & = 2\sup_{f \in \sF}\left| \hat{E}_f(x_{1:n})  - f(\mu_P) \right|\\
    & = 2\sup_{f \in \sF}\left| \hat{E}_f(x_{1:n})  - Pf \right|
\end{align*}
where in the inequality we used the definition of $\hat{\mu}$ as a minimizer.

On the other hand, given a mean estimator $\hat{\mu}$ for $\mu_P$, we can easily construct  estimators $\hat{E}_f(x_{1:n}) = f(\hat{\mu}(x_{1:n}))$ for each $f\in \sF$, and obtain
\[ \sup_{f \in \sF}\left| \hat{E}_f(x_{1:n})  - f(\mu_P) \right| = \| \hat{\mu}(x_{1:n}) - \mu_P \|. \]

The previous reasoning give us the proof of \MTthmvector.

\begin{proof}[Proof of \MTthmvector]
    Notice that the estimator $\muhat_{n,k}$ is in the form \eqref{eq:strategymean} with $\hat{E}_f$ given by $\Tmhat_{n,k}$. Thus, by the previous reasoning,
    \[ \left\| \muhat_{n,k}(X_{1:n}^\eps) - \mu_P \right\| \leq 2 \sup_{f \in \sF}\left| \Tmhat_{n,k}(f, X^\eps_{1:n})  - Pf \right|. \]
    Selecting $k$ according to \MTtmbounds\ we can bound the RHS, finishing the proof.
\end{proof}

\subsection{Measurability of our estimator when $\bX = \R^d$}

In this subsection we show that our vector mean estimator can be taken to be a measurable function when $\bX = \R^d$. We can write our estimator as
\[ \hat{\mu}_{n,k}(x_{1:n}) \in \argmin_{\mu \in \R^d} \left( \sup_{f \in \sF} \left| \Tmhat_{n,k}(f, x_{1:n}) - f(\mu)  \right| \right), \]
we now must check if it is possible to define $\hat{\mu}_{n,k}(x_{1:n})$ in a way that the estimator is measurable. This is done in the next lemma.

\begin{lemma}Given $1\leq k<n/2$, there exists a measurable function $\hat{\mu}_{n,k}:(\R^d)^n\to  \R$ such that 
 \begin{equation}\label{eq:tmvectormean}\forall x_{1:n}\in(\R^d)^n\,:\,
    \hat{\mu}_{n,k}(x_{1:n}) \in \argmin_{\mu \in \R^d} \left( \sup_{f \in \sF} \left| \Tmhat_{n,k}(f, x_{1:n}) - f(\mu)  \right| \right).
\end{equation}
\end{lemma}
\begin{proof} Recall the dual unit ball $\sF$ is a symmetric class of functions, thus
\[ \sup_{f \in \sF}\, \left|\Tmhat_{n,k}(f, x_{1:n}) - f(\mu)\right| = \sup_{f \in \sF}\, \Tmhat_{n,k}(f, x_{1:n}) - f(\mu). \]

Define
    \[ F(\mu, x_{1:n}) = \sup_{f \in \sF}\, \Tmhat_{n,k}(f, x_{1:n}) - f(\mu), \]
    which is convex in $\mu$ for fixed $x_{1:n}$ because it is a supremum of affine functions. $F$ is also a measurable function of $(\mu,x_{1:n})$ because the supremum can be taken over a countable dense subset $\sD\subset \sF$. For fixed $x_{1:n}$, the values $\Tmhat_{n,k}(f, x_{1:n})$ are uniformly bounded, and one can deduce from this that the set
    \[K(x_{1:n}):=\argmin_{\mu \in \R^d} F(\mu,x_{1:n})\]
    is convex, compact (the dimension is finite), and nonempty. 

    It remains to show that we can take a measurable function $\hat{\mu}_{n,k}$ with $\hat{\mu}_{n,k}(x_{1:n})\in K(x_{1:n})$. In order to use Kuratowski and Ryll-Nardzewski measurable selection theorem \cite{kuratowski1965general} we need to show that for every open set $U \subset \R^d$, the set
    \[ A_U := \{ x_{1:n} : K(x_{1:n}) \cap U \neq \emptyset \} \]
    is measurable. If $U=\emptyset$ we are done. Otherwise, we can write $U = \bigcup_{i=1}^n K_i$ where $K_i$ is compact and has non-empty interior for every $i\in\N$. Thus, it suffices to show that
    \[ A_K := \{ x_{1:n} : K(x_{1:n}) \cap K \neq \emptyset \} \]
    is measurable for every compact set $K$ with non-empty interior. Let $D$ be dense and countable in $\R^d$, and also assume that $K\cap D$ is dense in $K$. Notice that $K(x_{1:n})\cap K \neq \emptyset$ if there is some minimizer of $F(\cdot, x_{1:n})$ in $K$. For a given $x_{1:n} \in (\R^d)^n$,
    \begin{center}
       $K(x_{1:n})\cap K \neq \emptyset$ \\
       $\Leftrightarrow$ \\
       $\exists\mu \in K\text{ such that }F(\mu, x_{1:n}) \leq F(\mu', x_{1:n})\, \forall \mu' \in \R^d$\\
       $\Leftrightarrow$ \\
       $\exists \mu \in K\text{ such that }F(\mu, x_{1:n}) \leq F(\mu', x_{1:n})\,\, \forall \mu' \in D  $\\
       $\Leftrightarrow$ \\
       $\forall m \in \N,\,\exists \mu \in K\cap D\text{ such that }F(\mu, x_{1:n}) \leq F(\mu', x_{1:n}) + \frac{1}{m}\,\, \forall \mu' \in D  $.
    \end{center}
    The first equivalence follows from our previous observation. The second equivalence follows noticing that $F$ is continuous on $\mu$. The last equivalence is less obvious and uses that $K$ is compact:
    \begin{itemize}
        \item ($\Rightarrow$): assume $\mu \in K$ is such that $F(\mu, x_{1:n}) \leq F(\mu', x_{1:n})\,\, \forall \mu' \in D$; since $K\cap D$ is dense in $K$ there exists $(\mu_k)_{k=1}^\infty \subset K \cap D$ satisfying $\mu_k \to \mu$ as $k\to\infty$. Since $F$ is continuous on $\mu$ we have $F(\mu_k, x_{1:n}) \to F(\mu, x_{1:n})$ as $k\to\infty$, given $m \in \N$ exists $k_m$ such that $F(\mu_{k_m}, x_{1:n}) \leq F(\mu, x_{1:n}) + \frac{1}{m}$ and so $F(\mu_{k_m}, x_{1:n}) \leq F(\mu', x_{1:n}) + \frac{1}{m}\,\, \forall \mu' \in D $.

        \item $(\Leftarrow)$: Take a sequence $(\mu_m)_{m=1}^\infty \subset K \cap D$ satisfying $F(\mu_m, x_{1:n}) \leq F(\mu', x_{1:n}) + \frac{1}{m}\,\, \forall \mu' \in D$. Since $K$ is compact there is a sub-sequence $(\mu_{k_m})_{m=1}^\infty \subset (\mu_m)_{m=1}^\infty$ such that $\mu_{k_m} \to \mu$ as $m\to\infty$ for some $\mu \in K$. Moreover, $F(\mu, x_{1:n}) = \lim_{m\to\infty} F(\mu_{k_m}, x_{1:n})$ and so $F(\mu, x_{1:n}) \leq F(\mu', x_{1:n})\,\, \forall \mu' \in D$.
    \end{itemize} 
    
    Writing the previous observations using the appropriate set operation for each logical quantifier yields
    \[ A_K = \bigcap_{m=1}^\infty \bigcup_{\mu \in K \cap D} \bigcap_{\mu' \in D}\left\{ x_{1:n} : F(\mu, x_{1:n}) \leq F(\mu', x_{1:n}) + \frac{1}{m} \right\} \]
    and so $A_K$ is measurable.
\end{proof}

\section{Bounds for regression}\label{sec:sm_11}
We prove our bounds for regression in this section. The probability measure $P$ will often be implicit throughout the section. We will sometimes use the following property, 
\begin{equation}\label{eq:1storderopt}\forall f\in \sF\,:\, P\,\mt_{f}  = P\, \xi_P(X,Y)(f(X) - f^\star_P(X))\leq 0,\end{equation}
where $\mt_f =\mt_{f,P}$ is the ``multiplier term'' (see \MTregrelparam). Indeed, (\ref{eq:1storderopt}) is the first-order optimality condition for $f^\star_P= {\rm arg}\min_{f\in \sF}\,P\,(Y-f(X))^2$. 

In what follows, $Z_{1:n}^\eps$ is an $\eps$-contaminated sample from $P$. Similarly to the proof of \MTthmmaster, we write $\Tmhat_{n,k}^\eps(\cdot):=\Tmhat_{n,k}(\cdot,Z_{1:n}^\eps)$. The estimator $\widehat{f}_n^\eps \in \sF$ of $f^\star_P \in \sF$ is obtained solving the minimization problem
\[ \widehat{f}_n^\eps \in \argmin_{f\in\sF}\left(\sup_{g\in\sG}\Tmhat^\eps_{n,\phi n}\,(\ell_f-\ell_g)\right). \]

The next Lemma reduces \MTthmreg\ to proving localized upper and lower bounds on certain trimmed means. Introduce the notation:
\[r_f := \left\|f - f^\star_P\right\|_{L_2(P_\bX)}.\]

\begin{lemma}
\label{lemma:fixedpointargument}If $r > 0$ and $\gamma > 0$ are such that
\begin{equation}\label{eq:assumpfixedpoint}\inf_{r_f = r} \Tmhat_{n, \phi n}^\varepsilon \left( \ell_f - \ell_{f^\star_P}\right) \geq \gamma \geq 2\,\sup_{r_f \leq r} \Tmhat_{n, \phi n}^\varepsilon \left(\mt_{{f}} - P\mt_{{f}}\right),\end{equation}
then $r_{\widehat{f}_n^\eps}=\left\|\widehat{f}_n^\eps - f^\star_P \right\|_{L^2(P_\bX)} \leq r$ and $R(\widehat{f}_n^\eps)-R(f^\star_P)\leq r^2 + 2\gamma$.\end{lemma}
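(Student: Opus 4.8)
The plan is to treat Lemma \ref{lemma:fixedpointargument} as a purely deterministic statement: the conclusion will follow from the two inequalities in (\ref{eq:assumpfixedpoint}) together with four elementary structural properties of the trimmed-mean functional $T(\cdot):=\Tmhat^\eps_{n,\phi n}(\cdot)$, viewed (for the fixed contaminated sample) as a map on functions $h$. Namely, $T$ is homogeneous, $T(ch)=cT(h)$ for every $c\in\R$ (in particular odd, $T(-h)=-T(h)$); monotone, $h_1\leq h_2$ pointwise implies $T(h_1)\leq T(h_2)$; and translation equivariant, $T(h+c)=T(h)+c$ for constant $c$. All three are immediate, since negation, positive scaling, and constant shifts act compatibly on order statistics, while a pointwise inequality is inherited by order statistics. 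I will combine these with the convexity of $f\mapsto\ell_f$ (each $\ell_f(x,y)=(f(x)-y)^2$ is convex in $f$), the decomposition $\ell_f-\ell_{f^\star_P}=(f-f^\star_P)^2-2\mt_f$, the optimality condition $P\mt_f\leq 0$ from (\ref{eq:1storderopt}), and the convexity of $\sG=\sF$. Throughout write $\widehat f:=\widehat f_n^\eps$ and $\Psi(f):=\sup_{g\in\sG}T(\ell_f-\ell_g)$, so that $\widehat f$ minimizes $\Psi$ over $\sF$ and hence $\Psi(\widehat f)\leq\Psi(f^\star_P)$.

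For the bound $r_{\widehat f}\leq r$ I would argue by contradiction. Assume $r_{\widehat f}>r$ and set $\theta:=r/r_{\widehat f}\in(0,1)$ and $\tilde f:=(1-\theta)f^\star_P+\theta\widehat f$; by convexity $\tilde f\in\sF$ and $r_{\tilde f}=r$. Convexity of $\ell$ gives $\ell_{\tilde f}-\ell_{f^\star_P}\leq\theta(\ell_{\widehat f}-\ell_{f^\star_P})$ pointwise, so monotonicity and homogeneity yield $T(\ell_{\tilde f}-\ell_{f^\star_P})\leq\theta\,T(\ell_{\widehat f}-\ell_{f^\star_P})$. Since $r_{\tilde f}=r$, the left inequality of (\ref{eq:assumpfixedpoint}) gives $T(\ell_{\tilde f}-\ell_{f^\star_P})\geq\gamma$, whence (choosing $g=f^\star_P\in\sG$ in the supremum defining $\Psi$) $\Psi(\widehat f)\geq T(\ell_{\widehat f}-\ell_{f^\star_P})\geq\gamma/\theta>\gamma$. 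To reach a contradiction it then suffices to prove $\Psi(f^\star_P)\leq\gamma$, since this forces $\gamma<\Psi(\widehat f)\leq\Psi(f^\star_P)\leq\gamma$.

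The crux, and what I expect to be the main obstacle, is this bound $\Psi(f^\star_P)=\sup_{g\in\sG}T(\ell_{f^\star_P}-\ell_g)\leq\gamma$, because the supremum ranges over all $g$, whereas the right inequality of (\ref{eq:assumpfixedpoint}) only controls $g$ with $r_g\leq r$. I would split into two cases. For $r_g\leq r$, use $\ell_{f^\star_P}-\ell_g=-(g-f^\star_P)^2+2\mt_g\leq 2\mt_g$ pointwise, so by monotonicity, homogeneity, translation equivariance and $P\mt_g\leq 0$, $T(\ell_{f^\star_P}-\ell_g)\leq 2T(\mt_g)=2T(\mt_g-P\mt_g)+2P\mt_g\leq 2T(\mt_g-P\mt_g)\leq\gamma$, the last step being the right inequality of (\ref{eq:assumpfixedpoint}). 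For $r_g>r$, I project onto the sphere: with $s:=r/r_g\in(0,1)$ and $\tilde g:=(1-s)f^\star_P+sg\in\sF$, convexity of $\ell$ gives $\ell_{f^\star_P}-\ell_g\leq s^{-1}(\ell_{f^\star_P}-\ell_{\tilde g})$ pointwise, hence $T(\ell_{f^\star_P}-\ell_g)\leq s^{-1}T(\ell_{f^\star_P}-\ell_{\tilde g})=-s^{-1}T(\ell_{\tilde g}-\ell_{f^\star_P})$ by oddness; since $r_{\tilde g}=r$, (\ref{eq:assumpfixedpoint}) gives $T(\ell_{\tilde g}-\ell_{f^\star_P})\geq\gamma>0$, so this term is negative, in particular $\leq\gamma$. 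Taking the supremum over both cases gives $\Psi(f^\star_P)\leq\gamma$ and completes the contradiction, proving $r_{\widehat f}\leq r$.

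Finally, for the excess-risk bound I would use $R(\widehat f)-R(f^\star_P)=P(\ell_{\widehat f}-\ell_{f^\star_P})=r_{\widehat f}^2-2P\mt_{\widehat f}$, so since $r_{\widehat f}^2\leq r^2$ it remains to show $-2P\mt_{\widehat f}\leq 2\gamma$. Dropping the nonnegative quadratic part, $\ell_{\widehat f}-\ell_{f^\star_P}\geq -2\mt_{\widehat f}$ pointwise, so monotonicity, homogeneity and translation equivariance give $T(\ell_{\widehat f}-\ell_{f^\star_P})\geq -2T(\mt_{\widehat f})=-2T(\mt_{\widehat f}-P\mt_{\widehat f})-2P\mt_{\widehat f}$. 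On the other hand $T(\ell_{\widehat f}-\ell_{f^\star_P})\leq\Psi(\widehat f)\leq\Psi(f^\star_P)\leq\gamma$, and since $r_{\widehat f}\leq r$ the right inequality of (\ref{eq:assumpfixedpoint}) bounds $T(\mt_{\widehat f}-P\mt_{\widehat f})\leq\gamma/2$. Combining these yields $-2P\mt_{\widehat f}\leq\gamma+2T(\mt_{\widehat f}-P\mt_{\widehat f})\leq 2\gamma$, hence $R(\widehat f)-R(f^\star_P)\leq r^2+2\gamma$, as required.
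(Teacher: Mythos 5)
Your proof is correct and follows essentially the same route as the paper's: the same scaling-to-the-sphere argument (your convexity inequality $\ell_{\tilde f}-\ell_{f^\star_P}\leq\theta(\ell_{\widehat f}-\ell_{f^\star_P})$ is the paper's $q_f^2(\bar f-f^\star_P)^2-2q_f\mt_{\bar f}\geq q_f(\ell_{\bar f}-\ell_{f^\star_P})$ in disguise), the same bound $\sup_g \Tmhat^\eps_{n,\phi n}(\ell_{f^\star_P}-\ell_g)\leq\gamma$ split over $r_g\leq r$ and $r_g>r$, and the same excess-risk computation via $-2P\mt_{\widehat f}=2\Tmhat^\eps_{n,\phi n}(\mt_{\widehat f}-P\mt_{\widehat f})-2\Tmhat^\eps_{n,\phi n}(\mt_{\widehat f})$. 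The only cosmetic difference is that you phrase the radius bound as a contradiction while the paper states the localization inequalities directly and then reads off $r_{\widehat f_n^\eps}\leq r$.
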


\begin{proof} The proof proceeds in three stages that follow the ``localization + fixed point'' outline from previous work \cite{mendelson2015learning,Lecue2020,chinot2020robust}. In the first stage, we use a localization argument and show: 
\begin{equation}\label{eq:localizepqsim}\forall f\in \sF\,:\,\Tmhat_{n, \phi n}^\varepsilon \left( \ell_f - \ell_{f^\star_P} \right)\left\{\begin{array}{ll}>\gamma, & r_f>r;\\ \geq -\gamma, & r_f\leq r.\end{array}\right.\end{equation}
In the second stage, we bound $r_{\hat{f}_n^\eps}$ via (\ref{eq:localizepqsim}) and a ``fixed point'' argument. In the final stage, we notice that an excess risk bound is implicit in the first two steps.\\

\noindent \textbf{Localization:} our goal here is to prove (\ref{eq:localizepqsim}). Since $\sF$ is convex, we can scale down elements $f \in \sF$ with $r_f > r$ to elements $\Bar{f} \in \sF$ with $r_{\Bar{f}} = r$ and bound the corresponding trimmed mean via our assumption. Explicitly, assume $q_f = \frac{r_f}{r}>1$ and define $\Bar{f} = f^\star_P + \frac{f-f^\star_P}{q_f}$, so that $\bar{f}\in \sF$ by convexity and $r_{\bar{f}}=r$. We obtain
\begin{align*}
    \Tmhat_{n, \phi n}^\varepsilon \left( \ell_f - \ell_{f^\star_P} \right) &= \,\Tmhat_{n, \phi n}^\varepsilon \left( q_f^2\,(\bar{f}-f^\star_P)^2 - 2\mt_{\bar{f}}\right)\\
    & \geq q_f\, \Tmhat_{n, \phi n}^\varepsilon \left( \ell_{\Bar{f}} - \ell_{f^\star_P}\right) > \gamma \mbox{ using (\ref{eq:assumpfixedpoint}) and $q_f>1$.}
\end{align*}
This proves (\ref{eq:localizepqsim}) for $r_f>r$. For $r_f\leq r$, we use (\ref{eq:1storderopt}) and $(f-f^\star_P)^2\geq 0$ to obtain
\begin{align*}
    \Tmhat^\varepsilon_{n,\phi n}\left( \ell_{f} - \ell_{f^\star_P} \right) & = \Tmhat^\varepsilon_{n,\phi n}\left( (f-f^\star_P)^2 - 2 \mt_f \right)\\ &  \geq \Tmhat^\varepsilon_{n,\phi n}\left( (f-f^\star_P)^2 - 2 \left(\mt_f - P\mt_f\right) \right)\\
    &\geq -2\Tmhat^\varepsilon_{n,\phi n}\left(\mt_f - P\mt_f\right) \geq -\gamma \mbox{  by (\ref{eq:assumpfixedpoint}).}
\end{align*}

\noindent\textbf{Fixed point argument:} for any $f \in \sF$, let 
\[ \Delta(f) = \sup_{g \in \sF} \Tmhat^\varepsilon_{n,\phi n}\left( \ell_f - \ell_g \right),\]
so that $\hat{f}^\eps_n$ minimizes $\Delta$ over $\sF$. Therefore,
\begin{equation}\label{eq:localizefhat}\Tmhat^\varepsilon_{n,\phi n}\left( \ell_{\widehat{f}^\eps_n} - \ell_{f^\star_P} \right)\leq \Delta(\widehat{f}^\eps_n)\leq \Delta(f^\star_P)\end{equation}
The bounds in (\ref{eq:localizepqsim}) show that for any $g\in \sF$:
\[\Tmhat^\varepsilon_{n,\phi n}(\ell_{f^\star_P}-\ell_g)\leq \left\{\begin{array}{ll}\gamma,& r_g\leq r;\\ -\gamma, & r_g>r.\end{array}\right. \]
Since $\gamma>0$, we obtain $\Delta(f^\star_P)\leq \gamma$. This implies via (\ref{eq:localizefhat}) that 
$\Tmhat^\varepsilon_{n,\phi n}\left( \ell_{\widehat{f}^\eps_n} - \ell_{f^\star_P} \right)\leq \gamma$. We deduce that $r_{\widehat{f}^\eps_n}\leq r$ via assumption (\ref{eq:assumpfixedpoint}).

\noindent\textbf{Excess risk:} we have:
\[R_P(\hat{f}^\eps_n) - R_P(f^\star_P) = r_{\hat{f}_n^\eps}^2 - 2\,P\,\mt_{\hat{f}_n^\eps}.\]
The first term in the RHS is $\leq r^2$ by the above. The second can be bounded by:
\begin{align*}
    - 2P\mt_{\widehat{f}_n^\eps} &= 2 \Tmhat^\varepsilon_{n,\phi n}\left( \mt_{\widehat{f}_n^\eps} - P\mt_{\widehat{f}_n^\eps} \right) - 2 \Tmhat^\varepsilon_{n,\phi n}\left( \mt_{\widehat{f}_n^\eps} \right)\\
    & \leq 2 \Tmhat^\varepsilon_{n,\phi n}\left( \mt_{\widehat{f}_n^\eps} -P\mt_{\widehat{f}_n^\eps}\right) +  \Tmhat^\varepsilon_{n,\phi n}\left(\ell_{\hat{f}_n^\eps}-\ell_{f^\star_P}\right) \\ & \leq \gamma + \Delta(\hat{f}_n^\eps)\leq 2\gamma,
\end{align*}
where the last line follows from (\ref{eq:assumpfixedpoint}) combined with $r_{\widehat{f}^\eps_n}\leq r$ and the calculations in the previous step.\end{proof}

We now apply Lemma \ref{lemma:fixedpointargument} to prove our main result on regression. 

\begin{proof}[Proof of \MTthmreg] We continue to use the notational conventions introduced above. Our goal is to find an event $E$ and a constant $\gamma > 0$ such that assumption (\ref{eq:assumpfixedpoint}) of Lemma \ref{lemma:fixedpointargument} holds in $E$, with the value of $r := \Phi_P(\sF,n,\alpha,\eps)$ defined in (\ref{eq:defPhiPregression}), and a suitable $\gamma>0$. Let $x := \ln\frac{3}{\alpha} \geq \frac{1}{3}$. Following (\ref{eq:defPhiPregression}), we set
\begin{equation}\label{eq:defconstants} \theta_0:=\theta_0(\sF,P),\,\delta_\qd:=\frac{1}{32\theta_0}\mbox{ and }\delta_{\mt} = \frac{1}{448\theta_0^2}.\end{equation}
 






\begin{center}{\bf First step:}~lower bound quadratic part by Lipschitz term.\end{center}

Lemma \ref{lemma:fixedpointargument} requires control from below of $\ell_f-\ell_{f^\star_P}$, which includes a quadratic term. However, our assumptions are on the process $f-f^\star_P$ without the square. Therefore, our first step will be to find a bounded Lipschitz minorant for the quadratic term, to which we can apply concentration and contraction. Specifically, consider $a,c > 0$ with $2c > a$ (specific values to be chosen later). Define \[\psi(y) = r^2\left(2a\left(\frac{|y|}{r}\wedge c\right) - a^2\right)_+\,\,(y\in\R).\] Then:
\begin{enumerate}
\item $0\leq \psi(y)\leq y^2 $ for all $y\in\R$: this follows from the fact that the graph of $y\mapsto y^2$ is lower bounded by the tangent line at $y=ar$;
\item $\psi$ is $2ar$-Lipschitz with $\psi(0) = 0$; and
\item $\psi$ is bounded above by the constant $M_q:= r^2(2ac - a^2)$.
\end{enumerate}

Thus, for all $f\in\sF$ with $r_f=r$:
\begin{equation}\label{eq:lowerboundqdlipschitz}\Tmhat_{n, \phi n}^\varepsilon \left(\ell_f-\ell_{f^\star_P}\right)\geq \Tmhat_{n, \phi n}^\varepsilon \left(\psi(f-f^\star_P) - 2(\mt_{{f}} - P\mt_{{f}})\right),\end{equation}
where we also used (\ref{eq:1storderopt}).
\begin{center}{\bf Second step:}~define $E$ and lower bound $\Pr(E)$ via \MTthmmaster.\end{center}

Recall that our goal is to prove the existence of an event $E$ with $\Pr(E)\geq 1-\alpha$ so that, when $E$ holds, both  (\ref{eq:boundtheoremregression}) and (\ref{eq:oracle}) are satisfied. To do this, we recall the definition of $\sF_{\qd}(r) = \sF_{\qd}(r,P)$ and $\sF_{\mt}(r)=\sF_{\mt}(r,P)$ from \MTregrelparam, and set:
\begin{eqnarray}\sF_1 &:=& \psi\circ \sF_{\qd}(r)= \left\{\psi(f-f^\star_P)\,:\,f\in \sF,\,\|f-f^\star_P\|_{L^2(P_\bX)} = r\right\};\\
\sF_2 &:=& \sF_{\mt}(r) = \left\{  \mt_f - P\mt_f  : f \in \sF, ~ \|f-f_P^\star\|_{L^2(P_\bX)}\leq r \right\}.\end{eqnarray}

Now define
\begin{equation}\label{eq:defetaq}\eta_\qd := \left(6\phi+\frac{3 x}{n}\right)M_\qd +  \nu_2\left( \sF_1\right)\sqrt{\frac{2 x }{n}},\end{equation}
with $M_q$ as above, and \begin{equation}\label{eq:defetam}\eta_\mt :=  \left(6\phi+\frac{3 x}{n}\right)M_\mt + {\rm rem}_{M_\mt}\left( \sF_2\right) + \nu_2\left( \tau_{M_\mt}\circ\sF_2\right)\sqrt{\frac{2 x }{n}},\end{equation}
with $M_\mt$ soon to be defined. The event $E$ is defined as follows:
\begin{equation*} E = \left\{
\begin{gathered}
    \forall a_\qd, a_\mt \in \R, f_\qd \in \sF_\qd(r),\, f_\mt \in \sF_\mt(r) \\ \left| \Tmhat^\varepsilon_{n,\phi n} (a_\qd  \psi(f_\qd) + a_\mt f_\mt ) - a_\qd P \psi(f_\qd) \right| \leq \\
    |a_\qd| \left\{ 2\Emp_n\left( \sF_1^o \right) + \eta_\qd \right\} + |a_\mt| \left\{ 2\Emp_n\left( \tau_{M_\mt} \circ \sF_2 \right) + \eta_\mt \right\}
\end{gathered}
\right\}\end{equation*}

The event $E$ is measurable because $\sF$ has a countable dense subset. We now argue that $E$ is precisely the kind of event whose probability is bounded in \MTthmmaster. To see this, we follow the notation in that theorem, set $m:=2$, $x_1=x_2=x$ and $\sF_1$ and $\sF_2$ as above. We make the following choices of $M_i$, $b_i$ and $t_i$:

\begin{itemize}
\item The functions in $\sF_1=\psi\circ \sF_\qd(r)$ take values in $[0,M_{\qd}]$, so their expectations are also in this range. It follows that all functions in the centered class $\sF_1^o$ are bounded by $M_1:=M_{\qd}$ in absolute value. This means we can take $t_1=b_1=0$. 
\item Now consider $\sF_2 = \sF_\mt(r)$. Note that $Pf_{\mt}=0$ for all $f_{\mt}\in \sF_{\mt}(r)$, so $\sF_2=\sF_2^o$. Since $\sF_2$ may be unbounded, we will take $b_2=1$, and use \MTlemmabounds\ to obtain $M_\mt := M_2$ and $t_2 \geq x_2$ satisfying the assumptions of \MTthmmaster\ and also the bound
\begin{align}\nonumber
    2\Emp_n\left( \tau_{M_\mt} \circ \sF_2 \right)+\eta_\mt &\leq C_\eps\left\{8\Emp_n(\sF_2 ) + \inf_{q\in[1,2]}\nu_q(\sF_2) \left( \frac{\ln\frac{3}{\alpha}}{n} \right)^{1-\frac{1}{q}}\right.\\ \nonumber
    &~~~~ \left.+ \inf_{p \geq 1}\nu_p(\sF_2) \eps^{1 - \frac{1}{p}}\right\},
\end{align}
where $C_\eps := 384\left(1 + \frac{\eps}{ \eps \wedge \left(\frac{1}{2}-\eps\right) }\right)$ can be bounded by $768$ noticing that (\ref{eq:reghip0}) implies $\eps \leq \frac{1}{96\theta_0^2} < \frac{1}{4}$. Using further that $\nu_p(\sF_2) = \nu_p(\sF_\mt(r))\leq r\,\kappa_p(\sF)$, we obtain the bound:
\begin{align}\label{eq:boundLemma63}
    2\Emp_n\left( \tau_{M_\mt} \circ \sF_2 \right)+\eta_\mt &\leq 768\left\{8\Emp_n(\sF_2 ) + r\,\inf_{q\in[1,2]}\kappa_q(\sF) \left( \frac{\ln\frac{3}{\alpha}}{n} \right)^{1-\frac{1}{q}}\right.\\ \nonumber
    &~~~~ \left.+ r\,\inf_{p \geq 1}\kappa_p(\sF) \eps^{1 - \frac{1}{p}}\right\},
\end{align}
\end{itemize}
The upshot of this discussion is that \MTthmmaster\ can indeed be used to bound the probability of $E$, and we obtain:
\[\Pr(E)\geq 1-3e^{-x}\geq 1-\alpha.\]
From now on, we perform all calculations deterministically while assuming that $E$ holds. 
\begin{center}{\bf Third step:} bounds assuming $E$ holds.\end{center}
We combine the lower bound from the first step with the one defining the event $E$. Taking $a_\qd = 0$, $a_\mt = 2$ in $E$ gives, for $r_f \leq r$,
\begin{align}\label{eq:smallradius}
     2\Tmhat^\varepsilon_{n,\phi n}\left(\mt_f - P\mt_f\right) \leq   \underbrace{2\left\{2\Emp_n\left( \tau_{M_\mt} \circ \sF_2 \right)+\eta_\mt\right\}}_\text{(i)}.
\end{align}
Similarly, to consider $r_f=r$ we take $a_\qd = 1$, $a_\mt = -2$ in $E$ and obtain
\begin{align}\label{eq:largeradius}
    \Tmhat^\varepsilon_{n,\phi n}\left(\ell_{f}-\ell_{f^\star_P}\right) &\geq  \Tmhat^\varepsilon_{n,\phi n}\left( \psi(f-f^\star_P) - 2 \left(\mt_f - P\mt_f\right) \right)\\ \nonumber
    & \geq \underbrace{\inf_{f_\qd \in \sF_\qd(r)} P\psi(f_\qd)}_\text{(ii)} -\left\{ \underbrace{2\Emp_n\left( \sF_1^o \right)}_\text{(iii)} + \underbrace{\eta_\qd}_\text{(iv)} \right\} \\ \nonumber
    & ~~~~ -  \underbrace{2\left\{2\Emp_n\left( \tau_{M_\mt} \circ \sF_2 \right)+\eta_\mt\right\}}_\text{(i)},
\end{align}
where the last inequality is where we need $E$ to hold.

The bounds in (\ref{eq:largeradius}) and (\ref{eq:smallradius}) are still unwieldy. To obtain more useful bounds for (i - iv), we need a few calculations that are quite messy and not too enlightening.

\noindent\textbf{Bound (i).} Note that, since $r\geq r_\mt(\delta_\mt)$ one has $\sF_{\mt}(r)/r\subset \sF_{\mt}(r_\mt(\delta_{\mt}))/r_\mt(\delta_{\mt})$. Combining this with symmetrization and  the definition of $r_\mt(\delta_{\mt})$, we obtain:
\[\Emp(\sF_\mt(r)) \leq 2\Rad(\sF_\mt(r))\leq \frac{2r}{r_\mt(\delta_{\mt})}\Rad(\sF_\mt(r_\mt(\delta_{\mt})))\leq 2\delta_\mt\,r\,r_\mt(\delta_\mt).\]
By (\ref{eq:boundLemma63}), we obtain:
\begin{align}\label{eq:bound(i)}
    \text{(i)}&\leq 1536\,r\left\{ 16 \delta_\mt r_\mt(\delta_\mt) + \inf_{q\in[1,2]}\kappa_q(\sF) \left( \frac{x}{n} \right)^{1-\frac{1}{q}} + \inf_{p \geq 1}\kappa_p(\sF) \eps^{1 - \frac{1}{p}}\right\} \leq 14 \delta_\mt r^2,
\end{align}
where the upper bound is provided by the choice of $r = \Phi_P(\sF, \alpha, n, \eps)$ in (\ref{eq:defPhiPregression}) and the choice of $\delta_\mt$ in (\ref{eq:defconstants}).

\noindent\textbf{Bound (ii).} Using $y \wedge c \geq y - \frac{y^2}{c}$ and the definition of $\theta_0$ we can bound, for $r_f=r$,
\begin{align*}
    P \psi( f - f^\star_P) & \geq P\left\{ r^2\left(2a\left( \frac{|f-f^\star_P|}{r} - \frac{|f-f_\qd|^2}{r^2 c} \right) - a^2\right) \right\} \geq r^2 \left\{ 2a\left( \frac{1}{\theta_0} - \frac{1}{c} \right) - a^2 \right\}.
\end{align*}

\noindent\textbf{Bound (iii).} Using contraction and symmetrization (\MTsym) together with the fact that $r\geq r_\qd(\delta_\qd)$ give:
  \begin{align*}
     \Emp_n\left( \sF_1^o \right) = \Emp_n\left( \psi\circ\sF_\qd(r) \right) &\leq 2\Rad_n\left( \psi\circ\sF_\qd(r)\right) \leq 4ar\Rad_n\left( \sF_\qd(r) \right) \leq 4a \delta_\qd r^2.
 \end{align*}

\noindent\textbf{Bound (iv).} Since $\sF_1 = \psi\circ \sF_\qd(r)$, $\psi(0) = 0$ and $\psi$ is $2ar$-Lipschitz,
\[ \nu_2\left(\sF_1^o\right) \leq 2ar \, \sup_{f_\qd\in\sF_{\qd}(r)}\|f_\qd\|_{L^2(P_\bX)} = 2ar^2. \]

Observe that ${\rm rem}_{M_\qd}\left( \sF_1^o \right) = 0$ because $\sF_1^o$ is uniformly bounded by $M_\qd$, thus
\[ \eta_\qd \leq \left( 6\phi + \frac{3x}{n} \right)M_\qd + 2a r^2\sqrt{\frac{2 x }{n}}.  \]

\noindent\textbf{End of third step.} From (\ref{eq:smallradius}) and (\ref{eq:bound(i)}) we have 
\begin{equation}
    \label{eq:smallradius2}
    2\,\sup_{r_f \leq r} \Tmhat_{n, \phi n}^\varepsilon \left(\mt_{{f}} - P\mt_{{f}}\right) \leq 14 \delta_\mt r^2.
\end{equation} 

Observe that (\ref{eq:reghip0}) and our choice $x = \ln\frac{3}{\alpha}$ imply
\[ \sqrt{\frac{2x}{n}} \leq \sqrt{\frac{1}{4} \left( 6\phi + \frac{3x}{n} \right)} \leq \frac{1}{8 \theta_0},  \]
so, (\ref{eq:largeradius}) and the bounds on (i-iv) give
 \begin{align}\label{eq:largeradius2}
     \Tmhat^\varepsilon_{n,\phi n}\left( \ell_{f} - \ell_{f^\star_P} \right) &\geq r^2 \left\{ 2a\left( \frac{1}{\theta_0} - \frac{1}{c} \right) - a^2 \right\} - 8 a \delta_\qd r^2\\ \nonumber
     & ~~~~  - \left( 6\phi + \frac{3x}{n} \right)M_\qd - 2 a r^2 \sqrt{\frac{2x}{n}} - 14 \delta_\mt r^2\\ \nonumber
     \mbox{((\ref{eq:reghip0}) $+$ $M_q\leq 2acr^2$)}& \geq r^2 \left\{ 2a\left( \frac{1}{\theta_0} - \frac{1}{c} - \frac{c}{16 \theta_0^2} - \frac{1}{8\theta_0} - 4\delta_\qd \right) - a^2 \right\} - 14 \delta_\mt r^2.
 \end{align}
 \begin{center}{\bf Final step:}{ apply Lemma \ref{lemma:fixedpointargument} via choices of constants.}\end{center}
We finish the proof via an application of Lemma \ref{lemma:fixedpointargument}, assuming as before that $E$ holds. Recall $r:=\Phi_P$ and take $\gamma :=(32\theta_0^2)^{-1}r^2$. We defined $\delta_{\mt}=(448\theta_0^2)^{-1}$ in (\ref{eq:defconstants}); therefore, (\ref{eq:smallradius2}) gives condition (\ref{eq:assumpfixedpoint}) for $r_f\leq r$. Now consider the lower bound for $\Tmhat^\varepsilon_{n,\phi n}\left( \ell_{f} - \ell_{f^\star_P} \right)$ when $r_f = r$. Recall from (\ref{eq:defconstants}) that $\delta_{\qd}=(32\theta_0)^{-1}$. Insert this into the RHS of (\ref{eq:largeradius2}) and optimize over $2c > a > 0$. This leads to the choices $c = 4\theta_0$ and $a = c^{-1}$, giving
\[ r^2\,\sup_{a,c > 0: ~ 2c > a} 2a\left( \frac{1}{\theta_0} - \frac{1}{c} - \frac{c}{16 \theta_0^2} - \frac{1}{8\theta_0} - 4\delta_\qd \right) - a^2 = \frac{r^2}{16\theta_0^2} = 2\gamma. \]
Finally, combine (\ref{eq:largeradius2}) and (\ref{eq:smallradius}) to obtain:
\begin{align*}\label{eq:lowerboundlarger}
    \mbox{when }r_f = r: \, \Tmhat^\varepsilon_{n,\phi n}\left( \ell_{f} - \ell_{f^\star_P} \right) \geq \gamma.
\end{align*}
This gives the missing half of (\ref{eq:assumpfixedpoint}). Therefore, Lemma \ref{lemma:fixedpointargument} may be applied, and this finishes the proof.\end{proof}

\section{The relation between contamination level and the small-ball assumption}\label{sec:sm_12}
This section corresponds to \MTremarksm, where we note that a restriction of the form $\eps\leq c\,\theta_0(\sF,P)^{-2}$ is necessary in the setting of robust regression with quadratic loss (as in \MTthmreg).

To prove this, we use a family of distributions and a contamination model discussed in \MTsetupB. Given a dimension $d\in\N$ and a parameter $p\in (0,1)$, let $X'\sim \texttt{Normal}(0_{\R^d},I_{d\times d})$, $\xi\sim \texttt{Normal}(0,1)$ be independent. Given $\beta\in \R^d$, we let $P_{\beta}$ denote the distribution of the random pair $(X,Y)$ given by  
\[X= {B_i}\,\frac{X'}{\sqrt{p}}\mbox{ and }Y=\langle X,\beta\rangle + \xi.\]

Because $X$ is isotropic, robust linear regression in this setting consists of estimating $\beta$ in the Euclidean norm from an $\eps$-contaminated i.i.d. sample from $P_\beta$. Now, clearly, 
\[\forall \beta\in\R^d\,:\,\theta_0(\sF,P_\beta)=\sqrt{\frac{\pi}{2p}}.\]
The next Lemma roughly says that a contaminated sample from $P_\beta$ with $\eps>c\,\theta_0(\sF,P)^{-2} $ essentially contains no information about $\beta$. More precisely, the Lemma implies via standard arguments that for any $R>0$, one can find at least one $\beta\in\R^d$ for which the error of any estimator for $\beta$ will be larger than $R$, with probability $\geq 1-\alpha$.

\begin{lemma}\label{lem:coupleverydifferentthings} For any $\beta\in\R^d$, and parameters $n\in\N$, $\eps,p,\alpha\in (0,1)$ satisfying
\[\eps \geq 2p = \frac{\theta_0(\sF,P_{\beta})^2}{\pi}\mbox{ and }n \geq \left(\frac{2(1-p)+2\eps}{p}\right)\,\ln\frac{1}{\alpha}\]
one can define $\eps$-contaminated samples $Z_{1:n}^{\eps,\beta}$ from $P_\beta$ and an i.i.d. (uncontaminated) sample $Z_{1:n}^{0,0_{\R^d}}$ from $P_{0_{\R^d}}$ such that
\[\Pr\left\{Z_{1:n}^{\eps,\beta}=Z_{1:n}^{0,0_{\R^d}}\right\}\geq 1-\alpha.\]
\end{lemma}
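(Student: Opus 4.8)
The plan is to build both samples on a single probability space by an explicit coupling, so that they coincide on a high-probability event. I would start from i.i.d. triples $(B_i,X_i',\xi_i)_{i=1}^n$ with $B_i\sim\texttt{Ber}(p)$, $X_i'\sim\texttt{Normal}(0_{\R^d},I_{d\times d})$ and $\xi_i\sim\texttt{Normal}(0,1)$, all mutually independent, and define the uncontaminated $P_{0_{\R^d}}$ sample by $Z_i^{0,0}:=\left(B_i X_i'/\sqrt{p},\,\xi_i\right)$; this is genuinely i.i.d.\ from $P_{0_{\R^d}}$ since under $\beta=0_{\R^d}$ one has $Y=\langle X,0_{\R^d}\rangle+\xi=\xi$. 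Alongside it I would define a reference clean sample $W_i:=\left(B_i X_i'/\sqrt{p},\,\langle B_i X_i'/\sqrt{p},\beta\rangle+\xi_i\right)$, which is by construction an i.i.d.\ sample from $P_\beta$ (each coordinate has exactly the law of $(X,Y)$ described in the setup).

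The key observation is that $Z^{0,0}$ and $W$ can differ only on the coordinates with $B_i=1$: when $B_i=0$ both equal $(0_{\R^d},\xi_i)$, and when $B_i=1$ they differ only through the additive shift $\langle X_i'/\sqrt{p},\beta\rangle$ in the response. Hence $\#\{i: Z_i^{0,0}\neq W_i\}\le N$, where $N:=\sum_{i=1}^n B_i$. On the event $\mathcal{G}:=\{N\le \lfloor\eps n\rfloor\}$ the sample $Z^{0,0}$ thus agrees with the genuine i.i.d.\ $P_\beta$ sample $W$ on all but at most $\lfloor \eps n\rfloor$ coordinates, so by the definition in \S\ref{sub:corruption} it is itself a legitimate $\eps$-contaminated sample from $P_\beta$; this is exactly the adversary of \S\ref{sub:setupB}, who spends budget only on the active coordinates, erasing the regression signal there. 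I would therefore set $Z^{\eps,\beta}:=Z^{0,0}$ on $\mathcal{G}$ and $Z^{\eps,\beta}:=W$ on $\mathcal{G}^c$ (where $W$ is trivially a valid contaminated sample), so that $Z^{\eps,\beta}$ is always a valid $\eps$-contaminated sample from $P_\beta$, while $\{Z^{\eps,\beta}=Z^{0,0}\}\supseteq\mathcal{G}$.

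It then remains to bound $\Pr(\mathcal{G}^c)=\Pr(N>\eps n)$, where $N\sim\texttt{Binomial}(n,p)$ has mean $pn$. Since the hypothesis gives $\eps\ge 2p$, the threshold $\eps n$ sits at least a factor $2$ above $\Ex[N]$, and a Chernoff/Bernstein tail bound for the binomial shows that the assumption $n\ge \frac{2(1-p)+2\eps}{p}\ln\frac{1}{\alpha}$ forces $\Pr(N>\eps n)\le\alpha$, whence $\Pr\{Z^{\eps,\beta}=Z^{0,0}\}\ge\Pr(\mathcal{G})\ge 1-\alpha$. The conceptually decisive step is recognizing that the Bernoulli masking makes the null coordinates \emph{identical} under $P_\beta$ and $P_{0_{\R^d}}$, so the adversary only needs to touch the $\approx pn$ active coordinates; everything downstream is bookkeeping. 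The only places needing care are the verification that $Z^{0,0}$ restricted to $\mathcal{G}$ is a bona fide $\eps$-contaminated sample (the Hamming-distance estimate $\#\{i:Z_i^{0,0}\neq W_i\}\le N$) and matching the binomial tail to the stated sample-size condition, which I expect to be the main quantitative obstacle, as the constant $\frac{2(1-p)+2\eps}{p}$ is delicate for small $p$.
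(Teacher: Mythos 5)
Your proposal is correct and follows the same overall strategy as the paper's proof: realize both samples on a common probability space through the shared Bernoulli masks $B_i$ and noises $\xi_i$, let the adversary spend its budget only on the coordinates with $B_i=1$, and bound $\Pr\{\sum_i B_i>\eps n\}$ by a binomial tail inequality using $\eps\ge 2p$ and the stated sample-size condition. The one genuine difference is in how the $P_{0_{\R^d}}$-sample is built, and here your version is actually the more careful one. The paper takes $Z_i^{0,0_{\R^d}}:=(0_{\R^d},\xi_i)$ and asserts this is i.i.d.\ from $P_{0_{\R^d}}$; but under $P_{0_{\R^d}}$ the covariate is $B\,X'/\sqrt{p}$, which is nonzero with probability $p$, so $(0_{\R^d},\xi_i)$ has the wrong covariate marginal. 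Your choice $Z_i^{0,0}=(B_iX_i'/\sqrt{p},\,\xi_i)$ is genuinely i.i.d.\ from $P_{0_{\R^d}}$, and your adversary (who strips the signal $\langle X_i'/\sqrt{p},\beta\rangle$ from the responses on the active coordinates rather than zeroing the covariates) still stays within the Hamming budget $N=\sum_i B_i\le\eps n$ relative to the clean $P_\beta$-sample $W$, so the coupling conclusion goes through unchanged. The only loose end in your write-up is the one you flag yourself: you do not verify that the specific constant $\frac{2(1-p)+2\eps}{p}$ in the sample-size hypothesis is compatible with the Bernstein/Chernoff exponent. The paper is equally terse on this point (it invokes a Chernoff bound with denominator $2p(1-p)+2\eps$ and simply asserts the hypotheses suffice), so this is not a gap specific to your argument, but if you want a complete proof you should carry out that comparison explicitly.
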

\begin{proof}
Let $Z^{\beta}_{1:n}\stackrel{i.i.d.}{\sim} P_\beta$, with each $Z^\beta_i=(X_i,Y_i)$. We may assume that $X_i =B_i\,X'_i/\sqrt{p}$ and $Y_i = \langle X_i,\beta\rangle +\xi_i$ with $(B_i,X'_i,\xi)\sim (B,X',\xi)$ as above. As a result, a $\texttt{Ber}(p)$ proportion of $X_i$ in the sample are non-zero. To define an $\eps$-contaminated sample from $P_\beta$, we choose a subset $\mathcal{O}\subset \{i\in [n]\,:\, B_i\neq 0\}$ of size $\# \mathcal{O}\leq \eps\,n$ that is as large as possible, and then set:
\[Z_{i}^{\eps,\beta}:=\left\{\begin{array}{ll} (X_i,Y_i), & i\in [n]\backslash\mathcal{O}\\ (0_{\R^d},\xi_i), & i \in\mathcal{O}.\end{array}\right.\]

Now, the random sample consisting of the points $Z_i^{0,0_{\R^d}}:=(0_{\R^d},\xi_i)$ ($i\in [n]$) is i.i.d. from $P_0$. Moreover, by our definition of the contamination,
\[\left\{Z_{1:n}^{\eps,\beta}=Z_{1:n}^{\eps,0_{\R^d}}\right\}\supset \left\{\mathcal{O}= \{i\in [n]\,:\, B_i\neq 0\}\right\} = \left\{\sum_{i=1}^nB_i\leq \eps n\right\}.\]

The sum $\sum_{i=1}^nB_i$ is a binomial random variable with mean $pn$. Chernoff bounds give
\[ \Pr\left\{ \sum_{i=1}^nB_i\leq \eps n \right\} \geq 1 - e^{-\frac{(\eps - p)^2 n}{2\,p(1-p) + 2\eps}} \geq 1-\alpha\] where the our assumptions on $\eps$ and $n$ were used in the last two inequalities.\end{proof}

\section{Algorithms for robust linear regression: details}\label{sec:sm_13}
Recall that the estimator $\hat{f}^\eps_n = F_n(Z_{1:n}^\eps)$ is defined by the min-max problem 
\begin{equation}
    \label{eq:minimaxestimator}
    F_n(z_{1:n})\in\argmin_{f\in\sF}\left(\max_{g\in\sF}\Tmhat_{n,k}\,(\ell_f-\ell_g,z_{1:n})\right).
\end{equation}

The difference $\ell_f - \ell_g$ is convex in $f$ and concave in $g$, which suggests that the min-max problem above can be efficiently solved. Unfortunately, the trimmed mean operation $\Tmhat_{n,k}$ introduces complications; e.g. it is not obvious how to compute sub- and super-gradients. Besides that, the TM estimator as discussed in the main text requires knowledge of the (usually unknown) contamination level $\eps$ to choose the trimming parameter $k = \lfloor \phi n \rfloor$. Analogous issues can also be raised for the Median of Means. Thus, such estimators can only be of practical use if one can surpass these issues. 

In the remainder of this section, we restrict ourselves to the setting of linear regression described in \MTalg.  We present heuristics to
\begin{itemize}
    \item compute the min-max problem \eqref{eq:minimaxestimator};
    \item select the trimming level $k$ (and its analogous number of buckets $K$ for the MoM).
\end{itemize}

As in the aforementioned section, we let $\ell_{\beta}$ denote the loss associated with a vector $\beta\in\R^d$. Given two vectors $\beta_m,\beta_M\in\R^d$, 
\begin{equation}
    \label{eq:tmactive}
    \widehat{T}_{n,k}\left( \ell_{\beta^m} - \ell_{\beta^M} \right) = \frac{1}{n-2k} \sum_{i\in I_k\left(\beta^m, \beta^M,z_{1:n}\right)} \left(\langle x_i,\beta^m\rangle - y_i\right)^2 - \left(\langle x_i,\beta^M\rangle - y_i\right)^2
\end{equation}
where
\begin{itemize}
    \item each data
point $z_i=(x_i,y_i)\in \R^d\times \R$, and
\item $I_k(\beta^m,\beta^M,z_{1:n})$ -- called the {\em active set for $(\beta^m,\beta^M,z_{1:n})$} -- is the set of indices $i\in [n]$ that appear in the trimmed mean, ie. the set obtained once the $k$ largest and $k$ smallest values of $[(\langle x_i,\beta_m\rangle - y_i)^2 - (\langle x_i,\beta_M\rangle - y_i)^2]$ are removed (with ties broken arbitrarily).\end{itemize}

In what follows we simplify the notation defining, for every $I \subset [n]$, $x_I$ as the $|I| \times d$ matrix with lines $x_i$, $i \in I$, with $y_I$ defined analogously. We proceed to discuss heuristics to compute the regression vectors.

\subsection{Optimization methods for a fixed trimming parameter}

We consider two algorithms to evaluate \eqref{eq:minimaxestimator}. Algorithm \ref{alg:plugin} is the Plug-in method considered in the main text. Algorithm \ref{alg:admm} is an Alternating Direction Method of Multipliers (ADMM) which is an adaptation of the best performing method in \cite{Lecue2020}.

\begin{algorithm}[ht!]
\SetKwInOut{Input}{input}
\SetKwInOut{Output}{output}
\Input{$(x_1,y_1), \cdots, (x_n,y_n) \in \R^d\times \R$: the data\\$\beta_0^m, \beta_0^M \in \R^d$: a initial guess \\$\phi$: trimming ratio\\$T_{\max}$: number of iterations\\$\rho$: multiplier parameter}
\Output{$\beta^\star_{\phi}$: an approximate solution for the min-max problem}

\BlankLine

$t \leftarrow 0$

$k \leftarrow \lfloor \phi n \rfloor $ (evaluate trimming level)

\While{$t \leq T_{\max}$}{    
    $I_t^m \leftarrow I_k\left(\beta^m_t,\beta^M_t ,z_{1:n}\right)$ (get active indices)
        
    $\beta_{t+1}^m \leftarrow \left( x_{I^m_t}^T x_{I^m_t} + \rho I_d \right)^{-1}\left( x_{I^m_t}^T y_{I^m_t} + \rho \beta_t^m \right)$
    
    \BlankLine
    
    $I_t^M \leftarrow I_k\left(\beta^m_{t+1},\beta^M_t ,z_{1:n}\right)$ (get updated active indices)
    
    $\beta_{t+1}^M \leftarrow \left( x_{I^M_t}^T x_{I^M_t} + \rho I_d \right)^{-1}\left( x_{I^M_t}^T y_{I^M_t} + \rho \beta_t^M \right)$
    
    \BlankLine
    
    
    $t \leftarrow t+1$
    
}

$\beta^\star_{\phi}\leftarrow \argmin\left\{\Tmhat_{n,k}(\ell_\beta, z_{1:n})\,:\,\beta \in \bigcup_{t=1}^{T_{\max}}\{\beta^m_t, \beta^M_t\}\right\}.$

\caption{Alternating Direction Method of Multipliers.}\label{alg:admm}
\end{algorithm}

For the Plug-in method, we set $T_{\max} = 20$ in all experiments, as solutions do not improve beyond this number iterations. For the same reasons, we set $T_{\max}=50$ for ADMM. The parameter $\rho$ in Algorithm \ref{alg:admm} is set to $5$ as in \cite{Lecue2020}.

\begin{remark}Algorithm \ref{alg:admm} differs from its analogue in \cite{Lecue2020} in three ways. First, their estimator is based on the MoM principle. Second, that paper considers a sparse regression setting with an $\ell_1$ penalty. Third, the final output $\beta^\star_\phi$ in \cite{Lecue2020} is simply the final iterate $\beta^m_{T_{\max}}$. Preliminary experiments show that choosing $\beta^\star_\phi$ as a minimizer of the trimmed empirical risk improves in all settings considered in what follows.\end{remark}

\subsection{Cross-validation} We first recall the cross-validation procedure from the main text. Assume $\phi_1 \leq \phi_2 \leq \dots \leq \phi_m$ is a grid of possible choices for the trimming ratio $\phi$. Recall that the trimming ratio $\phi$ and the trimming level $k$ are related by $k=\lfloor \phi n \rfloor$. Let $v \leq n$ be the number of folds: that is, $[n]$ is partitioned into sets $\{B_l\}_{l=1}^v$ with respective sizes $n_l := |B_l| \in \{\lfloor n/v\rfloor,\lfloor n/v\rfloor+1\}$. The procedure works as follows. 

\begin{enumerate}
    \item For each choice of $(j,l)\in [m]\times [v]$, let $\beta^\star_{\phi_j}([n] - B_l)$ be the output of Algorithm \ref{alg:plugin} on the $n-n_l$ data points $(x_i,y_i)_{i\not\in B_l}$.
    \item For each choice of $(j,l)\in [m]\times [v]$, estimate the loss of $\beta^\star_{\phi_j}([n] - B_l)$ via a trimmed mean with ratio $\phi_j$ on the fold $B_l$:
    \[L(j,l):=\Tmhat_{n_l, \phi_j n_l} \left(\ell_{f_{j,l}} (z_{i})_{i\in B_l}\right).\]
    \item For each $j\in [m]$, associate a loss with trimming ratio $\phi_j$ via \[L(j):={\rm median}(L(j,l)\,:\,l\in [v]).\]
    \item Choose $\phi^\star = \phi_{j^\star}$ where $j^\star = \argmax_{j = 2, \dots, m} \frac{L(j-1)}{L(j)}$.
    \item Compute the final estimator $\beta^\star_{\phi^\star}$ by running Algorithm \ref{alg:plugin} with trimming ratio $\phi^\star$ on the full dataset $(z_i)_{i\in[n]}$. 
\end{enumerate}

We here call the choice of $\phi^\star$ made using step (4) above as choice by slope maximization (abbr. \textbf{max slope}). We also test a variant of step (4), that is used in \cite{Lecue2020}:
\begin{primenumerate}
    \item Choose $\phi^\star = \phi_{j^\star}$ where $j^\star = \argmin_{j = 1, \dots, m} L(j)$.
\end{primenumerate}

We call this last variant choice by loss minimization (abbr. \textbf{min loss}). Both approaches will be compared in the next section.

\subsection{Variants of MoM} As noted in the main text, we compare the performance of the trimmed mean estimator against a variant of the Median of Means procedure from \cite{Lecue2020}. Recall that MoM requires splitting the $n$ data points into $K$ buckets of approximately equal size. These splits are performed randomly at each iteration, as recommended by \cite{Lecue2020}. 

As observed in the article, the parameter $K$ is a close analogue of the trimming parameter $k$ in our procedure. This allows us to adapt the optimization and cross validation procedures described above. 

\begin{itemize}
    \item To adapt the Plug-in and ADMM methods, we use a different concept of active set. Consider the blocks $A_1,\dots,A_K\subset [n]$ used by the median-of-means construction. Then the active set $I_k(\beta^m,\beta^M)$ is the block of indices $A_r$ for which
    \begin{align*}
        &\frac{1}{\# A_r}\sum_{i\in A_r}(\ell_{\beta^m}(x_i,y_i) - \ell_{\beta^M}(x_i,y_i))\\
        & = {\rm median}\left\{\frac{1}{\# A_s}\sum_{i\in A_s}(\ell_{\beta^m}(x_i,y_i) - \ell_{\beta^M}(x_i,y_i))\,:\, s\in [K]\right\},
    \end{align*}
    with ties between blocks broken arbitrarily.
    \item When performing the optimization iterations, the blocks of median-of-means are resampled uniformly at random at each step.
    \item The cross validation procedure is now over choices of $K_j$. However, the loss estimate $L(j,l)$ are performed via a MoM estimator using the data in fold $B_l$ with $K_j/v$ blocks. With these changes, both cross-validation methods may be applied. 
\end{itemize}

\subsection{Details on Huber regression} For a given $L > 0$, Huber regression consists in solving
\begin{equation}
\label{eq:huber_argmin}
    \argmin_{\beta \in \R^d} \frac{1}{n}\sum_{i=1}^n H_L\left( \langle x_i, \beta \rangle - y_i \right),
\end{equation}
where
\[ H_L(t) = \begin{cases}
    2 L |t| - L^2 \text{ if } |t| > L\\
    t^2 \text{ if }|t| \leq L
\end{cases}.\]
Let $M \geq 1$ be called shape parameter and $\sigma > 0$ be a scale parameter. Taking $L = M\sigma$, the optimization procedure \eqref{eq:huber_argmin} is equivalent to
\[ \argmin_{\beta \in \R^d} \frac{1}{n}\sum_{i=1}^n H_M\left( \frac{ \langle x_i, \beta \rangle - y_i }{\sigma} \right). \]
If $\sigma$ scales with the sample, this parametrization let us uncouple the dependence of $L$ on the sample and on the desired robustness. Indeed, given only the shape parameter $M$, the scaling parameter can be learned from the data solving
\begin{equation}
    \label{eq:huber_sigma}
    \argmin_{\beta \in \R^d, \sigma > 0} \sigma + \frac{\sigma}{n}\sum_{i=1}^n H_M\left( \frac{\langle x_i, \beta \rangle - y_i }{ \sigma } \right),
\end{equation}
as proposed by \cite{huber2011robust}. Since this procedure is already implemented in \texttt{sklearn} Huber regression method, we simply use it as available there.

In this case, the cross validation procedure selects a shape parameter $M$ between $1$ and $1.35$. The quantity $1.35$ is not incidental, this value yields a $95\%$ statistical efficiency on a Gaussian sample. Thus, as $M$ grows, Huber regression favors efficiency over robustness. We also consider the two variants of the cross validation procedure, but letting
\begin{equation}
    \label{eq:Ljl_huber}
    L(j,l) = \frac{1}{|B_l|} \sum_{i \in B_l} | d_i | \mathbf{1}_{|d_i| > L_j} + d_i^2 \mathbf{1}_{|d_i| \leq L_j}
\end{equation}
where $L_j = M_j\sigma(M_j)$, $\sigma(M_j)$ is obtained from \eqref{eq:huber_sigma} and $d_i = \langle x_i, \beta \rangle - y_i$.

\begin{remark}
    Notice that $L(j,l)$ as in \eqref{eq:Ljl_huber} does not correspond to the Huber loss \eqref{eq:huber_argmin} or its penalized version \eqref{eq:huber_sigma}. Our prior experiments have shown that the choices \eqref{eq:huber_argmin} and \eqref{eq:huber_sigma} tend to select only $M=1.35$ or $M=1.0$ for both the \textbf{min. loss} and the \textbf{max. slope} strategies. Meanwhile, \eqref{eq:Ljl_huber} yields more nuanced choices of $M$ and better performance. 
\end{remark}

\section{Experiments on linear regression: details and additional results}\label{sec:sm_14}

In this section, we present a detailed description of our experiments with trimmed-mean-based-regression, median-of-means-based-regression, Huber regression, quantile regression, and ordinary least squares. In particular, we will also discuss in greater detail the experimental results from the main text. For convenience, we repeat some of the material from the main text. A GitHub repository with code to reproduce all figures and experiments in the paper can be found at
\begin{center}
\texttt{github.com/lucasresenderc/trimmedmean}.
\end{center}

\subsection{Extended Setup A} We extend Setup A from the main text to include heteroscedastic errors. This gives us many examples on which to test the performance of Algorithms \ref{alg:plugin} and \ref{alg:admm} for TM and MoM, as well as the two cross validation strategies proposed for TM, MoM and Huber regressions.

As in the main text, we consider a linear model. Let $d\geq 1$ be an integer and $X_1, X_2, \cdots, X_n$ be i.i.d. standard Gaussian's. For $i\in [n]$, define \[Y_i = \langle X_i , \beta^\star \rangle + \xi_i \; \mbox{ where }\beta^\star = \frac{1}{\sqrt{d}}\left[1,1,\cdots,1\right] \in \R^d\] and the $\xi_i$ are errors. To obtain $\xi_i$ we use the skewed generalized t-distribution from \cite{lian2025novel} to sample $\eta_i$. This distribution allows us to control the moments and the skew by changing the parameters $\alpha > 0$ and $\lambda \in (-1,1)$. Recall that if $\lambda = 0$, $\eta_i \sim t(2\alpha)$, where $t(\infty)$ denotes the standard Gaussian. On top of that, we consider two scenarios:
\begin{enumerate}
\item Homoscedastic: simply set $\xi_i=\eta_i$ for each $i\in [n]$.
\item Heteroscedastic: in this case we take $\xi_i=\,\exp(\|X_i\|^2)/2)\,\eta_i$ for each $i\in[n]$.
\end{enumerate}

In our experiments we let $\alpha \in \{0.5, 1, 2, \infty\}$, $\lambda \in \{ 0, .3, .6, .9 \}$ and consider both homoscedastic and heteroscedastic errors, yielding $32$ different combinations. We do not consider $\lambda < 0$ as they are symmetric to the corresponding $|\lambda|$.

The contamination model is defined as in the main text: a set of indices $\sO \subset [n]$ of size $\lfloor\eps n\rfloor$ is chosen uniformly at random, and one then sets
\[ (X_i^\eps, Y_i^\eps) = (X_i, Y_i) ~ \forall i\not\in\sO \text{ and } (X_i^\eps, Y_i^\eps) = (\beta^\star, 10000) ~ \forall i\in\sO. \]

\begin{remark}Before proceeding, notice that our setup is such that the $L^2$ error $\|\langle \beta,\cdot\rangle - \langle \beta^\star,\rangle\|_{L^2(P_{\bX})}$ equals the Euclidean distance between $\beta$ and $\beta^\star$.\end{remark}

\subsection{Experiment design}

We design an experiment to simultaneously compare
\begin{itemize}
    \item the performance of the Plug-in and the ADMM algorithms for the TM and the MoM based regressions;
    \item the two cross-validation parameter selection variations (max slope and min loss) both in terms of its ability to estimate the contamination level and in terms of the error obtained;
    \item the overall performance of all five regression methods.
\end{itemize}

As noted above, the different choices for the $\xi_{1:n}$ lead to 32 different possibilities for the uncontaminated data distribution. As in the main text, we vary the contamination level $\eps$ in
\[ \left\{ 
0, \frac{2}{100}, \frac{4}{100}, \frac{6}{100}, \frac{8}{100}, \frac{10}{100}, \frac{15}{100}, \frac{20}{100}, \frac{30}{100},\frac{40}{100}  \right\}.\]
Thus giving $160$ combinations of data distributions and contamination levels. In all cases, we set $d=20$, $n=300$, and we evaluate performance by performing 96 independent trials. Moreover, cross-validation with always be performed with $v=5$ folds.

For each trial, we evaluated:
\begin{itemize}
    \item For the TM and the MoM: the performance of the four different combinations of optimization and cross validation methods (ADMM with max slope, Plug-in with max slope, ADMM with min loss and Plug-in with min loss).
    \item For the Huber regression: the performance of the two cross validation strategies.
    \item For quantile and OLS no parameter was necessary, so we simply evaluated their performance.
\end{itemize}

The grid of values for $\phi$ used during the cross-validation for the trimmed-mean-based-regression was
\[ \left\{ \eps' + \frac{1}{30} : \eps' \in \left\{ 
0, \frac{2}{100}, \frac{4}{100}, \frac{6}{100}, \frac{8}{100}, \frac{10}{100}, \frac{15}{100}, \frac{20}{100}, \frac{30}{100},\frac{40}{100}  \right\} \right\}, \]
for the selection of the number of buckets $K$ for the median-of-means-based-regression was
\[ \left\{ 2\left(\eps' + \frac{1}{30}\right) + 1 : \eps' \in \left\{ 
0, \frac{2}{100}, \frac{4}{100}, \frac{6}{100}, \frac{8}{100}, \frac{10}{100}, \frac{15}{100}, \frac{20}{100}, \frac{30}{100},\frac{40}{100}  \right\} \right\}, \]
and for the shape parameter $M$ for Huber regression was
\[ \left\{ 1 + \frac{i}{9} \times 0.35 : i = 0,1,2,\dots, 9 \right\}. \]

\subsection{Preliminary experiments on algorithms and cross validation strategies}

We begin observing that over all executions, the Plug-in method outperformed the ADMM $52.7\%$ for of times for the TM and $92.7\%$ of the times for the MoM. The Plug-in algorithm is also faster to run. This justifies our choice for Algorithm \ref{alg:plugin}.

\begin{figure}
    \centering
    \includegraphics[width=\linewidth]{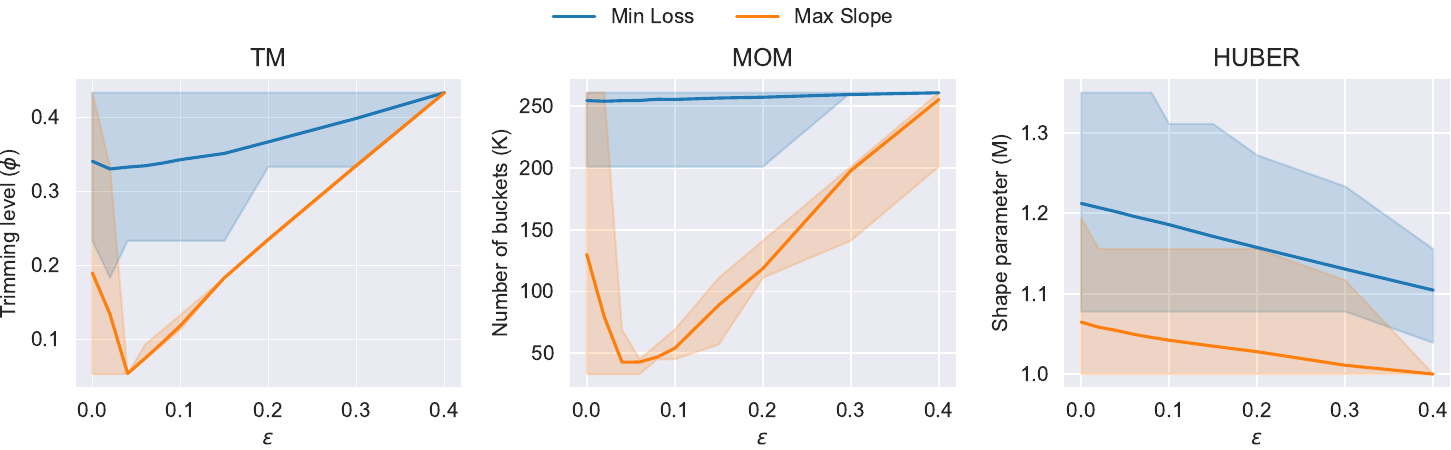}
    \caption{Comparison of the two cross validation strategies -- minimum loss (\textbf{min. loss.}) and maximum slope (\textbf{max. slope.}) -- across all experimental results. Bands represent the $5\%$ and the $95\%$ percentiles, while the solid line displays the median.}
    \label{fig:cross_validation}
\end{figure}

We also considered the performance of the two cross validation strategies proposed: the one based on the slope heuristic and the one based on the minimum loss. Figure \ref{fig:cross_validation} provides a comparison between both strategies over all our experimental results. We notice that the \textbf{min. loss.} strategy tends to select larger trimming levels for the TM and larger number of buckets for the MoM, trying to approximate both strategies to the median. Meanwhile, the parameter obtained with the \textbf{max. slope} strategy linearly follows the contamination level $\eps$.  We do note that both cross-validation methods are conservative for small contamination levels. As for the Huber regression, the \textbf{min. loss.} strategy provides a wider range of choices for the shape parameter. For instance, it is able to select $M=1.35$ when $\eps = 0$ and $\alpha = \infty$. Moreover, it also outperforms the \textbf{max. slope} in terms of $L_2$ error. Therefore, we choose the \textbf{max. slope} strategy for TM and MoM and the \textbf{min. loss} strategy for Huber regression.

\subsection{Experimental results} We now show the results of all the experimental combinations described above.

\begin{figure}[ht!]
    \centering
    \includegraphics[width=\linewidth]{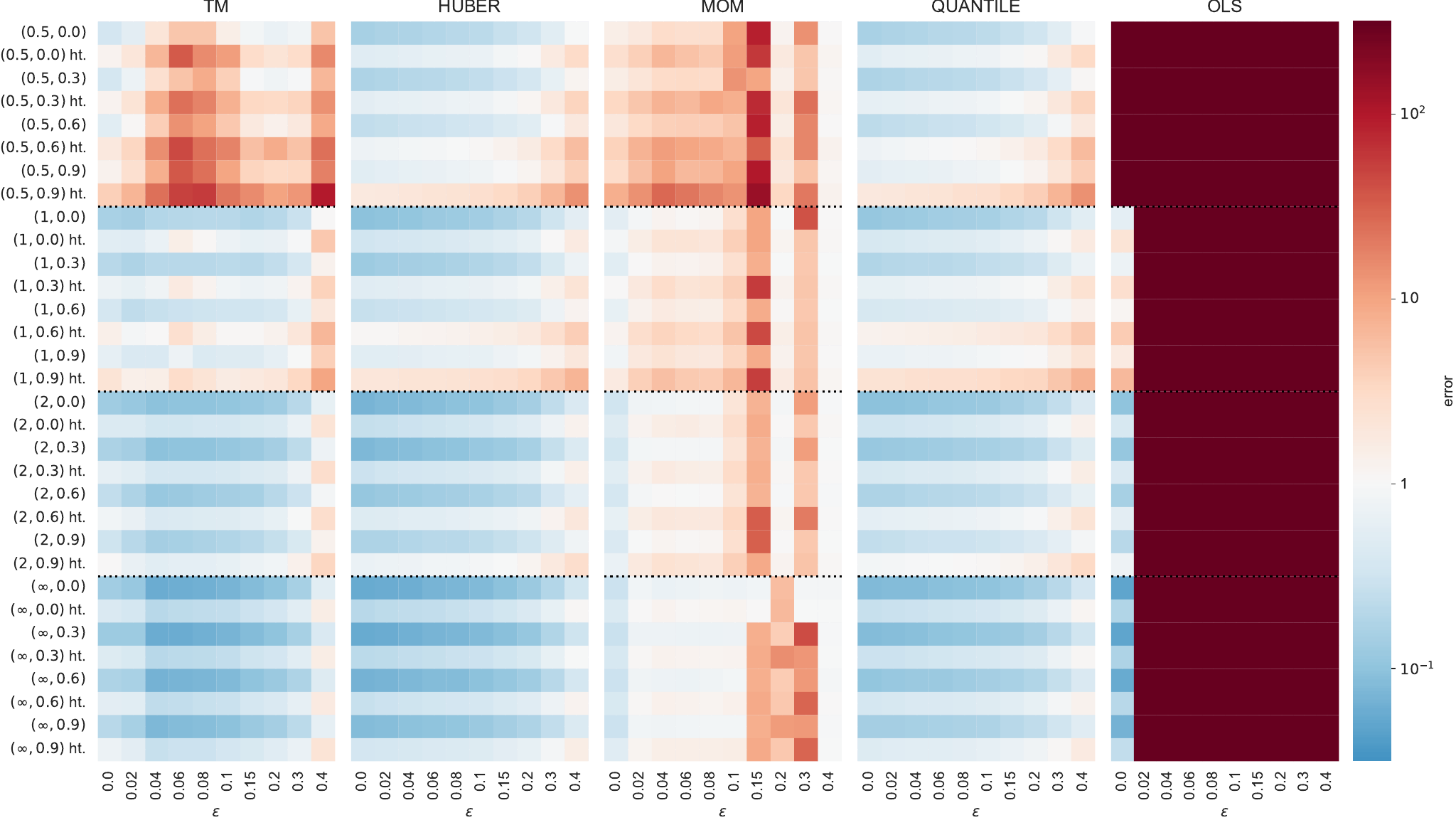}
    \caption{Heatmap of the $L^2$ error $\| \widehat{\beta}_n - \beta^\star \|$. Each line is a different combination of $(\alpha, \lambda)$ and homoscedasticity/heteroscedasticity. The columns vary the method and the contamination level $\eps$.}
    \label{fig:error_plots}
\end{figure}

\begin{figure}[ht!]
    \centering
    \includegraphics[width=\linewidth]{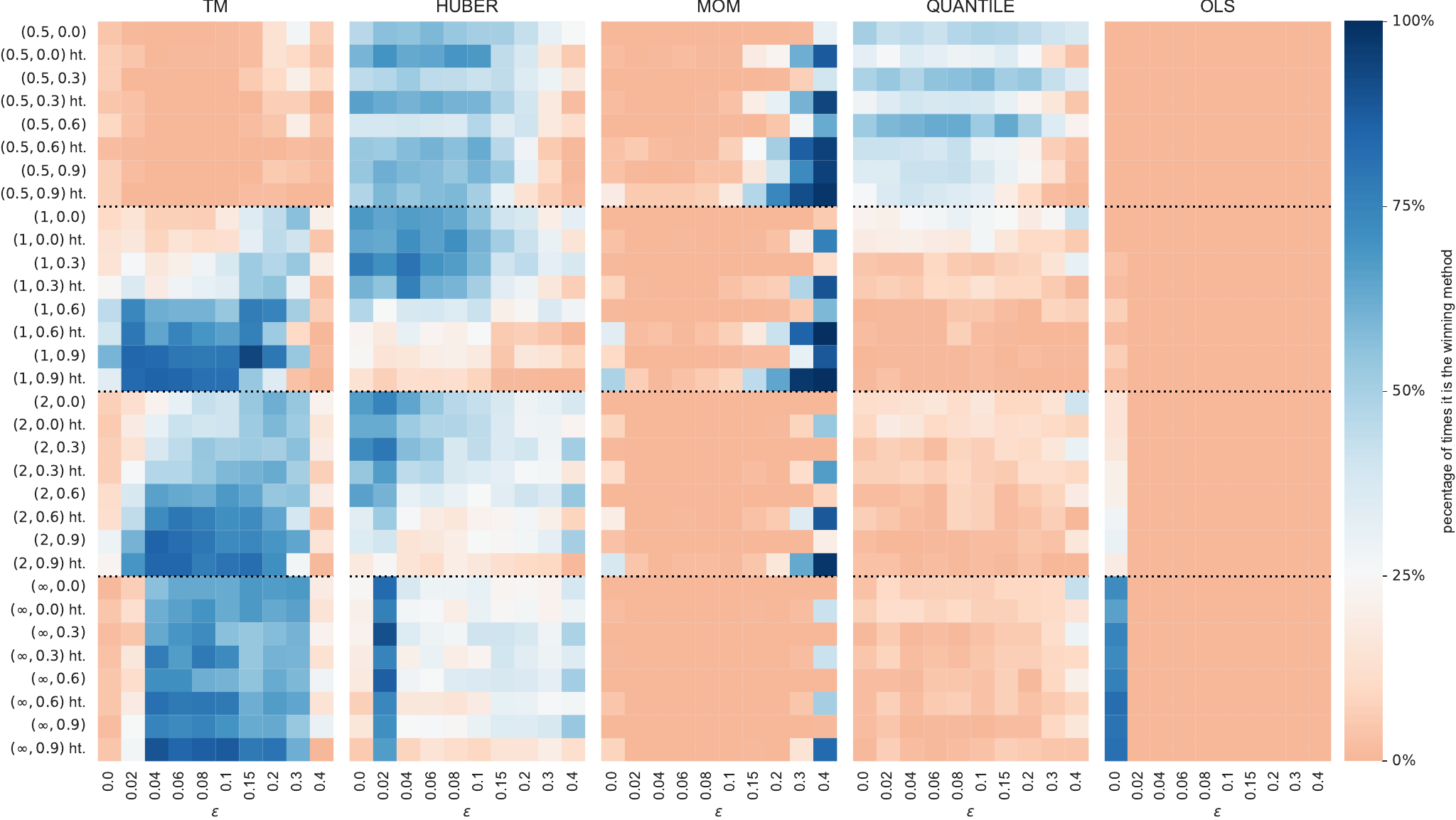}
    \caption{Percentage of seeds over which each method is the top performer. Each line is a different combination of $(\alpha, \lambda)$ and homoscedasticity/heteroscedasticity. The columns vary the method and the contamination level $\eps$. The bluer the entry, the higher the percentage of times that the specific method was the top performer for that choice of $(\alpha,\lambda,\varepsilon)$.} 
    \label{fig:winning_plots}
\end{figure}

Figures \ref{fig:error_plots} and  \ref{fig:winning_plots} correspond to two different evaluations of the method. In Figure \ref{fig:error_plots}, we see the magnitude of the error of each error, for each combination of $(\alpha,\lambda,\varepsilon)$ (blue is smallest, red is largest). The colors in Figure \ref{fig:winning_plots} corresponds to the percentage of times a specific method was the top performer for the corresponding combination of $(\alpha,\lambda,\varepsilon)$. 

We highlight the conclusions from the main text:
\begin{itemize}
    \item Huber regression is the most well-rounded method, being an overall good choice.
    \item MoM is the best performing method in when the contamination level is very large ($\eps = 0.4$) and the distributions are very heavy tailed. Quantile regression tends to outperform it in general.
    \item The TM outperforms Huber regression when tails are not extremely heavy, and deals better with skewed distributions.
    \item The OLS only performs well under no contamination and light-tailed distributions.
\end{itemize}

\begin{figure}[ht!]
    \centering
    \includegraphics[width=\linewidth]{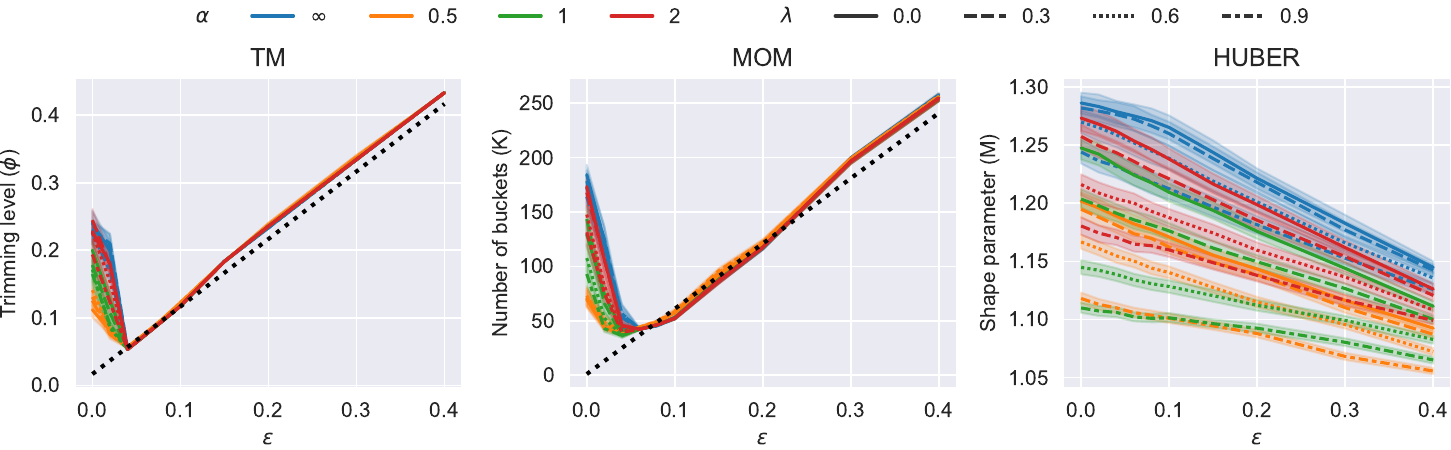}
    \caption{Parameter selected via cross-validation for different choices of $(\alpha, \lambda)$. Recall TM and MoM use the \textbf{max. slope} strategy and Huber regression uses \textbf{min. loss} strategy. Homocedastic and heteroscedastic cases are displayed together as it does not seem to impact the parameter selection.}
    \label{fig:best_param}
\end{figure}

Figure \ref{fig:best_param} displays the parameter selected by the cross validation procedure for each method. It highlights a difference in the mechanism underlying the classical Huber regression and the Median of Means or Trimmed mean. The trimming level ($\phi$) of the TM and the number of buckets ($K$) of the MoM are driven mostly by the contamination level, in a linear way. This empirical observation is aligned with our theoretical results for the TM and the ones in \cite{Lecue2020} for the MoM, since we propose $\phi \approx \frac{1}{n}\ln\frac{1}{\delta} + \eps$ and they propose $K \approx \ln\frac{1}{\delta} + 2\eps n$. Meanwhile, the shape parameter for Huber regression is driven not only by the contamination level, but also by the moments available and the skewness: it decays as $\eps$ increases, but also as $\alpha$ decreases and $\lambda$ increases. We highlight homocedasticity and heteroscedasticity have a minor impact on the selected parameters for all three methods.

\end{appendix}

\section*{Acknowledgments}
The work of RIO was supported by a Bolsa de Produtividade em Pesquisa
and a Projeto Universal from CNPq, Brazil; and by a Cientista do Nosso Estado grant from FAPERJ, Rio de Janeiro, Brazil. The work of LR was supported by the Conselho Nacional
de Desenvolvimento Científico e Tecnológico (CNPq).

\bibliographystyle{unsrtnat}
\bibliography{references}  






\end{document}